\newcommand{\cse}{ \color{black} }
\newtheorem{thm}{Theorem}[section]
\newtheorem{cor}[thm]{Corollary}
\newtheorem{lem}[thm]{Lemma}
\newtheorem{prop}[thm]{Proposition}
\theoremstyle{definition}
\newtheorem{definition}[thm]{Definition}
\theoremstyle{remark}
\newtheorem{rem}[thm]{Remark}
\numberwithin{equation}{section}
\newcommand{\norm}[1]{\Vert#1\Vert}
\newcommand{\inner}[1]{\langle #1 \rangle}
\newcommand{\abs}[1]{\left\vert#1\right\vert}
\newcommand{\set}[1]{\left\{#1\right\}}
\newcommand{\R}{\mathbb R}
\newcommand{\C}{\mathbb C} 
\newcommand{\ddbar}{\sqrt{-1}\,\bar{\partial}\partial}
\newcommand{\delb}{\bar{\partial}}
\newcommand{\Lie}[1]{\mathfrak{#1}}
\newcommand{\Hilb}{\textrm{Hilb}_{k}}
\newcommand{\FS}{\textrm{FS}_{k}}
\newcommand{\Gr}{\textrm{Gr}} 
\newcommand{\D}{\mathscr{D}}
\newcommand{\Tr}{\mathrm{tr}}
\newcommand{\tr}{\mathrm{tr}}
\newcommand{\Ker}{\mathrm{Ker}}
\newcommand{\Id}{\mathrm{Id}}
\newcommand{\End}{\mathrm{End}}
\newcommand{\Hom}{\mathrm{Hom}}
\begin{document}

\title[]{Quantization of the Laplacian operator on vector bundles,  I}%

\address{}%

\author{Julien Keller} \address{Institut de Math\'ematiques de Marseille I2M, UMR 7373, Aix - Marseille Universit\'e,
39 rue F. Joliot-Curie, 13453 Marseille Cedex 13,  France}
 \email[Julien Keller]{julien.keller@univ-amu.fr}  

\author{Julien Meyer} \address{D\'epartement de Math\'ematique, Universit\'e libre de Bruxelles, CP 218, Boulevard du Triomphe, B-1050 Bruxelles,  Belgique}
 \email[Julien Meyer]{jmeyer@ulb.ac.be}  

\author{Reza Seyyedali} \address{Department of Pure
Mathematics, University of Waterloo,
Waterloo, Ontario, N2L 3G1, Canada}
\email[Reza Seyyedali]{rseyyeda@uwaterloo.ca}

 \subjclass[2000]{Primary: 53D50, 47F05;
Secondary: 35P20, 53D20, 32W05, 14L24 }

\keywords{}

\date{May 14, 2015}
\begin{abstract}
Let $(E,h)$ be a holomorphic, Hermitian vector bundle over a polarized manifold. We provide a canonical quantization of the Laplacian operator acting on sections of the bundle of Hermitian endomorphisms of $E$. If $E$ is simple we obtain an approximation of the eigenvalues and eigenspaces of the Laplacian.
\end{abstract}
\maketitle
\tableofcontents
\pagenumbering{arabic}

\makeatletter
\renewcommand{\@todonotes@drawMarginNoteWithLine}{%
\begin{tikzpicture}[remember picture, overlay, baseline=-0.75ex]%
    \node [coordinate] (inText) {};%
\end{tikzpicture}%
\marginnote[{
    \@todonotes@drawMarginNote%
    \@todonotes@drawLineToLeftMargin%
}]{
    \@todonotes@drawMarginNote%
    \@todonotes@drawLineToRightMargin%
}%
}
\makeatother

\section{Introduction}

Consider $E$ a holomorphic vector bundle of rank $r$ over a projective manifold $X$ of complex dimension $n$ polarized by an ample line bundle $L$. Fix Hermitian metrics $h$ on $E$ and $\sigma$ on $L$ whose curvature defines a K\"ahler form $\omega$ on $X$. Given these data, one can consider the induced Bochner Laplacian over the smooth endomorphisms of $E$, denoted $$\Delta^E: C^{\infty}(X,End(E))\to C^{\infty}(X,End(E)),$$ that can be defined as follows. Let $g$ be the Riemannian metric on $TX$ associated to $\omega$ and the complex structure on $X$. Consider $\nabla^{TX}$ the Levi-Civita connection on $(TX,g)$ and $\nabla^E$ the connection on $End(E)$ induced from the Chern connection on $(E,h)$. For $\{e_i\}$ a local orthonormal frame of $(TX,g)$, one defines  $\Delta^E$ as
$$\Delta^E=-\sum_{i=1}^{2n} \left(\nabla_{e_i}^E \nabla_{e_i}^E - \nabla_{\nabla_{e_i}^{TX}e_i}^E\right).$$
Note that in the case of a line bundle or a Hermitian-Einstein metric, this reduces to the Kodaira Laplacian $\Delta_{\partial}=\partial^{*_h}_{End(E)}\partial_{End(E)}$, up to a constant factor. In general, both Laplacians are related by a Weitzenb\"ock type formula, namely 
$$\Delta_{\partial}=\frac{1}{2}\left(\Delta^E-\sqrt{-1}\Lambda_\omega F_{End(E)}\cdot \right)=\frac{1}{2}\left(\Delta^E-[\sqrt{-1}\Lambda_\omega F_{h},\cdot ]\right),$$ where $F_h\in \Omega^{1,1}(X,End(E))$ denotes the curvature endomorphism of the metric $h$. It is well known that, thanks to Akizuki-Nakano formula, $\Delta_{\partial}$ is related to $\Delta_{\bar\partial}$  which is the proper Dirac operator in our K\"ahlerian context. Remark also that  there is a decomposition $End(E)=End(E)_{SH}\oplus End(E)_{H}$ where $End(E)_{SH}$ and $End(E)_H$ denote the bundles of skew-Hermitian and Hermitian endomorphisms of $(E,h)$ respectively. It is easy to check that the Laplacian $\Delta^E$ preserves this decomposition. Throughout the paper, we restrict the action of the Bochner Laplacian to the space of sections of the bundle of Hermitian endomorphisms of $E$, which is the central object of our study.

It is well known from the work of M. Atiyah and R. Bott, that the map $$\mu_{\infty}:h\mapsto \frac{\sqrt{-1}}{2\pi} \Lambda_\omega F_h $$ can be seen as a moment map for the action of the gauge group $\mathcal{G}$ of $E$, where one has identified $Lie(\mathcal{G})$ to its dual using the volume form $\omega^n/n!$. This is a crucial fact in Donaldson's approach in the so-called Kobayashi-Hitchin-Donaldson-Uhlenbeck-Yau correspondence for vector bundles. As described by X. Wang in \cite{W2}, the  moment map $\mu_{\infty}$ can be seen as the limit of moment maps (denoted later $\bar\mu_k=\bar\mu$ and that depend on an integral parameter $k>0$) on the space of holomorphic embeddings of $M$ into Grassmannians spaces $Gr(r,N_k)$ for the $SU(N_k+1)$-action. \\
On one hand, a direct computation shows that the operator $\Delta_{\partial}$  is the gradient of the moment map $\mu_{\infty}$. On the other hand, the gradients of  the maps $\bar\mu_k$ are given by certain operators of the form $P_k^*P_k$ which are endomorphisms of $\sqrt{-1}\mathfrak{u}(N_k+1)$, the space of Hermitian matrices of size $N_k+1$. Consequently it is natural to wonder what is the relationship between $\Delta^E$, $\Delta_\partial$ and the operators $P_k^*P_k$. \\

In this paper, building on earlier works of X. Wang, J. Fine, X. Ma and G. Marinescu, we prove that the operators $P_k^*P_k$ {\it provide a quantization} of the Bochner Laplacian $\Delta^E$. At that stage it is crucial to notice that $P_k^*P_k$ lives as an operator on a finite dimensional space and is defined in a purely algebraic way. Actually, it is possible to define other approximations of the Laplacian (see for instance the $Q_k$ operator defined in \cites{D3,LM}) but they will not appear as a result of a canonical construction. \\

As a first step of this quantization, we shall see the following. Our embeddings of $M$ into Grassmannian spaces are provided by basis of $N_k+1$ holomorphic sections. For $\phi$ a  Hermitian endomorphisms with respect to $h$, one can consider the deformation by $\phi$ of the $L^2$ inner product induced by $(h,\omega^n/n!)$ on the space of holomorphic sections. Thus we obtain naturally an element $Q_{k,\phi}\in \sqrt{-1}\mathfrak{u}(N_k+1)$. Given, $\phi,\psi$ Hermitian endomorphisms, we prove that the trace of the map $$H_{I_{\mu_{\infty}}}:(\phi,\psi) \mapsto \int_M \phi \Delta_{\partial} \psi \frac{\omega^n}{n!}$$
is the quantum limit of $\tr(Q_{k,\phi} P_k^*P_k Q_{k,\psi})$, after normalization, see Theorem \ref{thm1}. The map $H_{I_{\mu_{\infty}}}$ has a natural interpretation. Firstly, note that given a moment map over a symplectic manifold $\Xi$ associated to the action of a linear reductive group $\Gamma$, one can define the integral of the moment map as a $\Gamma$-invariant functional over $\Xi\times \Gamma^\C$. It enjoys various important properties, especially that of convexity along complex orbits. Moreover its properness or coercivity can be related to the stability of the point in $\Xi$, at least in finite dimension thanks to Kempf-Ness theory. In the case of $\mu_{\infty}$ and the gauge group, the integral $I_{_{\mu_{\infty}}}$ of the moment map $\mu_{\infty}$ is known as Donaldson's functional. Its hessian is precisely $H_{I_{\mu_{\infty}}}$. This first theorem relies deeply on asymptotic results due to X. Ma and G. Marinescu.\\
In a second step we analyze the spectrum of the operators $P_k^*P_k$. If $E$ is simple we explain that the eigenvalues of $P_k^*P_k$ converge after renormalization towards the eigenvalues of the Laplacian $\Delta^E$ (Theorem \ref{thm3}). Eventually we provide a description of the eigenspaces for which a similar result holds (Theorem \ref{thm4}). For these two theorems that are key results of this paper, our method follows a strategy defined by J. Fine for the quantization of the Lichnerowicz operator \cite{Fi3}. \\

We remark that we have some flexibility in our results, i.e they are still valid if we vary the data (metric or endomorphism) in bounded sets with respect to the smooth topology. Using this fact, we derive in the last part some quantization results for sequences of balanced metrics in the sense of X. Wang when $E$ is assumed to be Mumford stable (see Theorem \ref{bal1thm}) or for balanced metrics in the sense of Zhang-Luo-Donaldson (see Theorem \ref{thmlinebal}).

\bigskip

{\small
{\bf Acknowledgements.} The first author is very grateful to Joel Fine for illuminating conversations on the subject of balanced embeddings throughout the years. Furthermore the second author wants to thank him for the numerous conversations they had and for generously sharing his ideas with him.\\

\footnotesize{\noindent The work of the first author has been carried out in the framework of the Labex Archim\`ede (ANR-11-LABX-0033) and of the A*MIDEX project (ANR-11-IDEX-0001-02), funded by the ``Investissements d'Avenir" French Government programme managed by the French National Research Agency (ANR). The first author was also partially supported by supported by the ANR project EMARKS, decision No ANR-14-CE25-0010.\\
The second author was supported by an AFR PhD grant from the Fonds National de la Recherche Luxembourg, and acknowledges travel support from the Communaut\'e fran\c caise de Belgique via an ARC and from the Belgian federal government via the PAI ``Dygest''.
}
}
\section{A quick review of the Fubini-Study geometry on Grassmannians}

Denote the space of all matrices $z \in \mathcal{M}_{r \times N}(\C)$ with rank $r$ by  $\mathcal{M}^0_{r \times N}.$ By definition,
$$\Gr(r,N)=\frac{\mathcal{M}^0_{r \times N}}{\sim},$$ where $z \sim w$ if and only if there exists $P \in GL(r, \mathbb{C})$  such that $z=Pw.$ Note that $\Gr(r,N)$ can be identified with the space of all $r$-dimensional subspaces of $\mathbb{C}^N$. The tangent bundle of $\Gr(r,N) $ is given by $$\frac{\set{(z,X)| z \in \mathcal{M}^0_{r \times N}, X\in \mathcal{M}_{r \times N}}}{\sim^{\prime}},$$ where $(z,X)\sim^{\prime} (w,Y)$ if and only if there exists $P \in GL(r, \mathbb{C})$ and $Q \in \mathcal{M}_{r \times r}(\mathbb{C})$ such that $$z=Pw, \,\,\, X=PY+Qw.$$
Under the equivalence relation $\sim^\prime$, an element of $T \Gr(r,N)$ is denoted $[z,X]$.
The Fubini-Study metric on $T \Gr(r,N)$ is given by $$\inner{[(z,X)], [(z,Y)]}_{FS}=\tr(Y^*(zz^*)^{-1}X)-\tr((zz^*)^{-1}zY^*(zz^*)^{-1}Xz^*).$$
One can easily check that this is the Fubini-Study metric induced on $\Gr(r,N)$ using the Plucker embedding. \\
Let $U_{r} \to \Gr(r,N)$ be the dual of the tautological bundle $$U_{r}^*=\{(P,v) \in \Gr(r,N) \times \mathbb{C}^N| v \in P\}.$$ The standard Hermitian inner product on $\C^N$ induces Fubini-Study metrics on $U_{r}$ and $U_{r}^*$.  There is a 1-1 correspondence between the space of linear forms on $\mathbb{C}^N$ and the holomorphic sections of $U_{r} \to \Gr(r,N):$ 
\begin{align*}
f \in (\mathbb{C}^N)^* \to &s_{f}\in H^0(\Gr(r,N),U_{r})\\
& s_{f}[z](v)=f(v), \text{ for any }[z]\in \Gr(r,N)\,\,,v\in (U_r^*)_{[z]}. 
\end{align*}
Let $e_{1},\dots e_{N} $ be the standard basis of $\C^N$ and define $s_{i}=s_{e_{i}^*}$. Then $s_{1},\dots s_{N}$ is a basis for $H^0(\Gr(r,N),U_{r})$ satisfying 
$$\sum_{i=1}^N s_{i}\otimes s_{i}^{*_{h_{FS}}}=Id_{U_{r}}$$
as an endomorphism over $Gr(r,N)$.

\begin{definition}
\label{H_A}
Let $A \in \sqrt{-1}\Lie{u}(N)$. It defines a smooth Hermitian  endomorphism (with respect to the Fubini-Study metric $h_{FS}$)  $H_A$ of $U_{r}$ as follows:
$$H_A=\sum_{i,j=1}^N A_{ji} s_{i}\otimes s_{j}^{*_{h_{FS}}}.$$
\end{definition}

\cse

\begin{definition}
Let $A \in \sqrt{-1}\Lie{u}(N)$. It induces a holomorphic vector field $\xi_A$ on $\Gr(r,N)$ as follows:
\begin{equation}\label{zeta} \xi_A(z):=[z,zA],
\end{equation}
using notations as above. Note that $\tr(H_A)$ is the associated Hamiltonian to the vector field $\xi_A$ with respect to the Fubini-Study metric. 
\end{definition}

We recall briefly the symplectic framework described in \cites{C-K,W1}.
Let us consider first $$\mu : \Gr(r,N) \rightarrow \sqrt{-1} \mathfrak{u}(N)$$ the moment map associated to the $U(N)$ action and the Fubini-Study metric $\omega_{FS}$ on $\Gr(r,N)$. Note that here we identify implicitly the Lie algebra $\sqrt{-1} \Lie{u}(N)$ with its dual using the Killing form $\inner{A,B}=\tr(AB)$. Given homogeneous unitary coordinates, the explicit formula for $\mu$ is given by
\begin{equation}
\mu([z])=z^*(zz^*)^{-1}z. \label{mu}
\end{equation}

\section{Context and Preliminaries\label{sect2}}

\subsection{Balanced metrics for bundles }
Let $(X,\omega)$ be a compact K\"ahler manifold of complex dimension
$n$ and let $(L,\sigma)$ be an ample holomorphic Hermitian line bundle over
$X$ such that its curvature satisfies 
$\frac{1}{2\pi}\ddbar\log\sigma=\omega.$ Let $E$ be a holomorphic vector
bundle of rank $r $ and degree $d$ over $X$. Since $L$ is ample we can use holomorphic sections of $E(k)$ to  embed $X$ into $\Gr(r, H^{0}(X,E(k))^*)$ for $k \gg 0$. Here $E(k)=E \otimes L^{\otimes k}$.
Indeed, for any $x \in X$, we have the evaluation map
$H^{0}(X,E(k))\rightarrow E(k)_{x}$, which sends $s$ to $s(x)$. Since
$E(k)$ is globally generated, this map is a surjection. So its dual
is an inclusion of $E(k)_{x}^* \hookrightarrow H^{0}(X,E(k))^*$, which
determines an $r$-dimensional subspace of $H^{0}(X,E(k))^*$.
Therefore we get a map $\iota: X \rightarrow \Gr(r, H^{0}(X,E(k))^*)$.
Since $E(k)$ is very ample for $k \gg 0$, $\iota$ is an embedding. Clearly we have
$\iota^*U_{r}^*=E(k)^*$, where $U_{r}^*$ is the tautological vector
bundle on $\Gr(r, H^{0}(X,E(k))^*)$, i.e. at any $r$-plane in $\Gr(r,
H^{0}(X,E(k))^*)$, the fibre of $U_{r}^*$ is exactly that $r$-plane. Any
choice of basis for $H^{0}(X,E(k))$ gives an isomorphism between
$\Gr(r, H^{0}(X,E(k))^*)$ and the standard $\Gr(r,N_{k})$, where $$N_{k}= \dim
H^{0}(X,E(k)).$$ We have the standard Fubini-Study Hermitian metric on
$U_{r}$, so we can pull it back to $E(k)$ to get a Hermitian metric. \\

Let us fix the embedding $\iota: X \hookrightarrow   \Gr(r,N)$ and identify $X$ with its image $\iota(X)$. The space of Fubini-Study type metrics on $H^0(X,i^*U_{r})=H^0(G(r,N),U_{r})$ is identified with the Bergman space. $$\mathcal{B}=\mathcal{B}_k={GL(N)}/{U(N)}\simeq \sqrt{-1} \Lie{u}(N).$$ 
Then, we can consider the integral of $\mu$ over  $X$ with respect to the volume form $$\Omega:=\displaystyle \frac{\omega^n}{n!},$$ namely
$$\bar{\mu}(\iota)=\int_X \mu(\iota(p)) \frac{\omega^n(p)}{n!} $$which induces a moment map for the $U(N)$ action over the space of all bases of $H^0(X,i^*U_{r})=H^0(G(r,N),U_{r})$. More precisely on the space $\mathfrak{M}$ of smooth maps from $X$ to $ Gr(r,H^0(\iota^* U_r)^*)$, we have a natural symplectic structure $\varpi$ defined by
$$\varpi(a,b)=\int_X \inner{a,b}\frac{\omega^n}{n!},$$ for $a,b\in T_{\iota}\mathfrak{M}$ and $\inner{.,.}$ the Fubini-Study inner product induced on the tangent vectors.
Then, $SU(N)$ acts isometrically on $\mathfrak{M}$ with the equivariant moment map $\bar{\mu}_0$ given by $$\iota\mapsto-\sqrt{-1}\left(\bar{\mu}(\iota) - \frac{\tr(\bar{\mu}(\iota))}{N}Id_{N}\right)\in \sqrt{-1}\mathfrak{su}(N).$$ Note that using a Hermitian metric $H$ on $H^0(X,\iota^*U_{r})$, one can consider an orthonormal basis with respect to $H$ and the associated embedding,  and thus it also makes sense to speak about $\bar{\mu}_0(H)$ (resp. $\bar{\mu}(H)$) up to $SU(N)$-equivariance (resp. up to $U(N)$-equivariance). 

\medskip

In the Bergman space $\mathcal{B}=GL(N)/U(N)$, we have a preferred metric associated to the volume form $\displaystyle \frac{\omega^n}{n!}$ and the moment map we have just defined, and this is precisely a balanced metric.
\begin{definition}
A metric $H\in Met(H^0(X,\iota^*U_{r}))$ (resp. the embedding $\iota$) is said to be balanced if the trace free part of $\bar{\mu}(H)$ (resp. $\bar{\mu}(\iota)$) vanishes. 
\end{definition}
In this definition, we have used the fact that for $H\in \mathcal{B}_{k}$  the quantity $\tr(\bar{\mu}_0(H))$ is well defined as function on $\mathcal{B}_k$ as it does not depend on the choice of the orthonormal basis (and thus the embedding) with respect to which it is computed, thanks to the $SU(N)$-equivariance. \\

Define the operator 
\begin{equation}
\label{P(A)}
P :\sqrt{-1} \Lie{u}(N)\rightarrow C^{\infty}(X, {T\Gr(r,N)}_{|\iota(X)})
\end{equation}
by $P(A)=\xi_A|_{X}$. Using the Fubini-Study metric on ${T\Gr(r,N)}_{|\iota(X)}$ and the volume form $\Omega$ on $X$, one obtains a $L^2$ inner product on $C^{\infty}({T\Gr(r,N)}_{|\iota(X)})$. Together with the Killing form on $\sqrt{-1} \Lie{u}(N)$
this allows us to define the adjoint map $$P^{*} : C^{\infty}({T\Gr(r,N)}_{|\iota(X)})\rightarrow \sqrt{-1} \Lie{u}(N).$$ Therefore, we have a map 
 $$P^*P: \sqrt{-1}\Lie{u}(N) \to  \sqrt{-1}\Lie{u}(N).$$

\begin{lem}

For any $A,B \in \sqrt{-1} \Lie{u}(N),$ we have $$\Tr(Bd\bar{\mu}(A))=\int_{X}\inner{\xi_{A},\xi_{B}}_{FS}\,\Omega.$$
\end{lem}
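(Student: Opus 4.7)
The plan is to recognize both sides as integrals over $X$ of a pointwise quantity on $\Gr(r,N)$, and then match the integrands using the Hamiltonian structure of the Fubini--Study moment map.

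First, I would differentiate under the integral sign. Since $\bar{\mu}(\iota)=\int_X \mu(\iota(p))\,\Omega$ with $\Omega$ independent of $\iota$, and since $\xi_A$ is by construction the vector field on $\Gr(r,N)$ generated by the one-parameter subgroup $e^{tA}\subset GL(N,\mathbb{C})$ acting on the target, one obtains
\[
\Tr\bigl(B\,d\bar{\mu}(A)\bigr)=\int_X \Tr\bigl(B\cdot d_{\iota(p)}\mu(\xi_A)\bigr)\,\Omega=\int_X d\bigl(\Tr(B\mu)\bigr)(\xi_A)\,\Omega.
\]

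Next, I would identify the integrand with the derivative of a Hamiltonian. From the explicit formula $\mu([z])=z^*(zz^*)^{-1}z$ and Definition \ref{H_A}, a short trace computation gives $\Tr(B\,\mu)=\tr(H_B)$ as functions on $\Gr(r,N)$. By the remark following the definition of $\xi_B$ in the excerpt, this function is precisely the Hamiltonian of the holomorphic vector field $\xi_B$ on $(\Gr(r,N),\omega_{FS})$.

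The core step is then the Kähler-geometric identity: for any holomorphic vector field $\eta$ on a Kähler manifold with real-valued Hamiltonian $h_\eta$, and any tangent vector $v$, one has $dh_\eta(v)=\langle v,\eta\rangle_{FS}$ pointwise. Applied to $\eta=\xi_B$, $h_\eta=\tr(H_B)$, and $v=\xi_A$, this yields $d\bigl(\Tr(B\mu)\bigr)(\xi_A)=\langle \xi_A,\xi_B\rangle_{FS}$, and integrating against $\Omega$ gives the claim.

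The main obstacle I anticipate is precisely this last step: one must reconcile the sesquilinear-looking formula for $\langle\cdot,\cdot\rangle_{FS}$ given in the excerpt (which involves $Y^*$) with the Riemannian-type pairing of real vector fields produced by the Hamiltonian identity. I would verify compatibility by a direct coordinate computation: expand $d\mu(\xi_A)=A\mu+\mu A-2\mu A\mu$ for $A$ Hermitian, evaluate $\langle \xi_A,\xi_B\rangle_{FS}$ using the formula in the excerpt with $X=zA$ and $Y=zB$, and use cyclicity of the trace together with Hermiticity of $A$, $B$, $\mu$ to match the two real expressions (equivalently, one may appeal to the standard fact that for $A\in\sqrt{-1}\mathfrak{u}(N)$ the vector field $\xi_A$ is the Riemannian gradient of $\tr(H_A)$, so that $dh_B(\xi_A)=g_{FS}(\xi_B,\xi_A)$ is immediate).
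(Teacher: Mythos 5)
Your proposal is correct, and its primary route is the conceptual one that the paper only gestures at: the authors open their proof with the remark that the lemma ``is a direct consequence of the fact that $\bar{\mu}$ is a moment map,'' but what they actually write out is the coordinate computation that you relegate to your final verification paragraph. Concretely, the paper computes $(d\mu(A))_{ij}=(A\mu-\mu A\mu)_{ij}$ from $\mu_{ij}=\inner{s_i,s_j}_{FS}$, evaluates $\inner{\xi_A,\xi_B}_{FS}$ from the explicit Fubini--Study formula with $X=zA$, $Y=zB$, and observes that both sides equal $\tr(B\mu A)-\tr(B\mu A\mu)$ pointwise, then integrates. Your Hamiltonian/gradient argument (that $\tr(B\mu)=\tr(H_B)$ and that $\xi_B$ is the Riemannian gradient of this function for $B\in\sqrt{-1}\Lie{u}(N)$) buys conceptual clarity and explains \emph{why} the identity holds, but it is exactly where the convention issues you flag become live: the lemma's pairing $\inner{\cdot,\cdot}_{FS}$ is the sesquilinear Fubini--Study product on holomorphic tangent vectors, and the paper's $d\mu(A)=A\mu-\mu A\mu$ is the $(1,0)$-type derivative rather than your full real flow derivative $A\mu+\mu A-2\mu A\mu$ (the two differ by a Hermitian conjugate, hence by passing to real parts and a factor of $2$). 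The paper's computation is precisely what pins these conventions down so the identity holds on the nose, which is presumably why the authors chose it; your plan to resolve the mismatch by the same direct computation with cyclicity and Hermiticity of $A$, $B$, $\mu$ is the right fix, so there is no gap, only a difference in which argument is foregrounded.
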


\begin{proof}
The Lemma is a direct consequence of the fact that $\bar{\mu}$ is a moment map but can also be seen by a direct computation.
Let $\{s_{1}, \dots ,s_{N}\}$ be the standard basis for $(\mathbb{C}^N)^*$ (our computation will actually be independent of the choice of the basis). We have 
$$(d\mu(A))_{ij}=\sum_{\alpha} A_{i\alpha} \inner{s_{\alpha}, s_{j}}_{FS}-\sum_{\alpha,\beta} A_{\beta \alpha}\inner{s_{\alpha}, s_{j}}_{FS}\inner{ s_{i},s_{\beta}}_{FS}.$$ Therefore,
$$\tr(Bd\mu(A))=\tr(B\mu A)-\tr(B\mu A \mu).$$ On the other hand $\mu_{ij}[z]=\inner{s_{i},s_{j}}_{FS}[z]=(z^*(zz^*)^{-1}z)_{ij}.$ Hence,
\begin{align*}\inner{\xi_{A},\xi_{B}}_{FS}&=\tr(Bz^*(zz^*)^{-1}zA)-\tr((zz^*)^{-1}zBz^*(zz^*)^{-1}zAz^*)\\&=\tr(B\mu A)-\tr(B\mu A \mu)=\tr(Bd\mu(A)).
\end{align*}
\end{proof}

 
\begin{cor} 
The map $P^*P$ is the gradient of the moment map $\bar{\mu}.$
\end{cor}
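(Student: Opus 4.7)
The plan is to interpret the corollary as a direct linear-algebra consequence of the preceding Lemma, once one unwinds what "gradient" means in this context. Because $\bar\mu$ takes values in $\sqrt{-1}\mathfrak{u}(N)$, which has been identified with its dual through the Killing pairing $\langle A,B\rangle=\tr(AB)$, the statement should be read as the identity of linear endomorphisms
\[
d\bar\mu_{\iota} \;=\; P^{*}P \;:\; \sqrt{-1}\mathfrak{u}(N)\longrightarrow \sqrt{-1}\mathfrak{u}(N),
\]
at the given embedding $\iota$. So the job is to show that for every pair $A,B\in \sqrt{-1}\mathfrak{u}(N)$ the trace $\tr(B\,d\bar\mu(A))$ coincides with $\tr(B\,P^{*}P(A))$, and then to invoke nondegeneracy of the Killing form.

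Concretely, I would proceed in three short steps. First, apply the Lemma directly to rewrite
\[
\tr\bigl(B\, d\bar\mu(A)\bigr)\;=\;\int_{X}\langle \xi_{A},\xi_{B}\rangle_{FS}\,\Omega.
\]
Second, recall from the definition of $P$ in \eqref{P(A)} that $P(A)=\xi_{A}|_{\iota(X)}$, so the right-hand side is nothing but the $L^{2}$ inner product $\langle P(A),P(B)\rangle_{L^{2}}$ taken with respect to the Fubini–Study metric on $T\Gr(r,N)_{|\iota(X)}$ and the volume form $\Omega$. Third, pass $P$ to the other side of the pairing by the very definition of the adjoint $P^{*}$ that was introduced just above the Lemma, obtaining
\[
\langle P(A),P(B)\rangle_{L^{2}}\;=\;\langle P^{*}P(A),B\rangle\;=\;\tr\bigl(B\,P^{*}P(A)\bigr).
\]

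Combining these three equalities gives $\tr(B\,d\bar\mu(A))=\tr(B\,P^{*}P(A))$ for every $B\in \sqrt{-1}\mathfrak{u}(N)$, and nondegeneracy of $\tr(\cdot\,\cdot)$ on Hermitian matrices forces $d\bar\mu(A)=P^{*}P(A)$. There is no real obstacle here: the content is entirely packed into the Lemma, and the corollary is simply its translation through the adjunction defining $P^{*}$. The only point worth being careful about is the bookkeeping of pairings, namely that the inner product on the target side of $P$ is the one defined by $\langle\cdot,\cdot\rangle_{FS}$ together with $\Omega$, while the one on the source side is the Killing form $\tr(\cdot\,\cdot)$; both of these are exactly the pairings used to introduce $P^{*}$ above, so the identification is clean.
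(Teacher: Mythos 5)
Your argument is correct and is exactly the route the paper intends: the corollary is an immediate consequence of the preceding Lemma combined with the defining adjunction $\langle P(A),P(B)\rangle_{L^2}=\tr\bigl(B\,P^*P(A)\bigr)$ and nondegeneracy of the trace pairing (the paper itself leaves the corollary without further proof and uses this same identity later, e.g.\ in the proof of Theorem \ref{thm1}). Nothing is missing.
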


The following lemmas can be derived from the projective estimates proved in \cite{Fi1}.
\begin{lem}\label{lemm0}
For any $A,B \in \sqrt{-1} \Lie{u}(N)$ and $[z] \in \Gr(r,N),$ we have
\begin{enumerate}  \item $\Tr(H_{A}H_{B})+\inner{\xi_{A},\xi_{B}}=\tr(AB\mu).$
\item $\tr(B d\bar{\mu}(A))+\inner{H_{A},H_{B}}_{L^2}=\tr(AB\bar{\mu}).$ 
\end{enumerate}

\end{lem}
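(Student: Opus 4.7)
The strategy is to prove identity (1) pointwise on $\Gr(r,N)$ by a direct algebraic manipulation using the explicit formula of Definition \ref{H_A}, and to obtain (2) by integrating (1) over $X$ against $\Omega = \omega^n/n!$.

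For (1), the plan is to expand $H_A$ and $H_B$ via Definition \ref{H_A} and compute
\[
H_A H_B = \sum_{i,j,k,l} A_{ji}\,B_{lk}\, (s_{i}\otimes s_{j}^{*_{h_{FS}}})(s_{k}\otimes s_{l}^{*_{h_{FS}}})
\]
using the rank-one product identity $(s_{i}\otimes s_{j}^{*_{h_{FS}}})(s_{k}\otimes s_{l}^{*_{h_{FS}}}) = \mu_{kj}\, s_{i}\otimes s_{l}^{*_{h_{FS}}}$ together with the pointwise trace formula $\tr(s_{i}\otimes s_{l}^{*_{h_{FS}}}) = \inner{s_{i},s_{l}}_{FS} = \mu_{il}$. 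Both identities are standard consequences of the convention $\mu_{ij}=\inner{s_{i},s_{j}}_{FS}$ and of how a rank-one endomorphism acts on vectors (the trace being computed against any orthonormal frame of the fibre). This yields
\[
\tr(H_{A}H_{B}) = \sum_{i,j,k,l} A_{ji}\,B_{lk}\,\mu_{kj}\,\mu_{il},
\]
which I would recognize as $\tr(\mu B\mu A) = \tr(A\mu B\mu)$ after contracting indices into matrix products and applying cyclicity of the trace. On the other hand, the proof of the preceding lemma already established the pointwise formula $\inner{\xi_{A},\xi_{B}}_{FS} = \tr(B\mu A) - \tr(\mu B\mu A) = \tr(AB\mu) - \tr(A\mu B\mu)$. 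Adding the two expressions, the terms $\tr(A\mu B\mu)$ cancel and we are left with $\tr(AB\mu)$, which is (1).

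For (2), I would integrate (1) over $X$ with respect to $\Omega$. By definition, the natural $L^{2}$ inner product on sections of $\End(U_{r})|_{\iota(X)}$ is $\int_{X}\tr(\,\cdot\, \cdot\,)\,\Omega$, so the left-hand side becomes $\inner{H_{A},H_{B}}_{L^{2}} + \int_{X}\inner{\xi_{A},\xi_{B}}_{FS}\,\Omega$. The right-hand side becomes $\tr\bigl(AB\int_{X}\mu\,\Omega\bigr) = \tr(AB\bar{\mu})$ by linearity of $\tr$ and the definition of $\bar{\mu}$. Finally, the preceding lemma rewrites the middle integral as $\tr(B\,d\bar{\mu}(A))$, which produces (2).

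The main step is the algebraic manipulation in (1); the only mild difficulty is careful bookkeeping of indices and consistent use of the convention $\mu_{ij}=\inner{s_{i},s_{j}}_{FS}$. Once the two rank-one identities above are isolated, both parts follow quickly, and (2) is essentially free from (1) by integration and the previous lemma.
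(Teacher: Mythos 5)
Your proof is correct: the pointwise rank-one computation gives $\tr(H_AH_B)=\tr(A\mu B\mu)$, which combined with the formula $\inner{\xi_A,\xi_B}=\tr(B\mu A)-\tr(\mu B\mu A)$ from the preceding lemma yields (1), and integrating over $X$ together with $\Tr(Bd\bar\mu(A))=\int_X\inner{\xi_A,\xi_B}\,\Omega$ yields (2). The paper itself omits the proof (it only refers to the projective estimates of Fine), so your direct verification is exactly the argument implicit there and consistent with the computations in the proof of the preceding lemma.
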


\begin{lem}\label{2normA}
We have $$\norm{d \bar{\mu}(A)}\leq 2\norm{A} \norm{\bar{\mu}}_{op},$$
where $\norm{A}$ is the Hilbert-Schmidt norm of $A.$
\end{lem}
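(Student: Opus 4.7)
The plan is to start from the identity of Lemma~\ref{lemm0}(2), which converts the problem into estimating two trace pairings, and then to invoke duality: since $d\bar\mu(A)$ is Hermitian and the Killing form on $\sqrt{-1}\Lie{u}(N)$ agrees with the Hilbert-Schmidt inner product, one has
\[
\|d\bar\mu(A)\|\;=\;\sup_{\|B\|=1,\, B^{*}=B}\bigl|\tr(B\,d\bar\mu(A))\bigr|.
\]
By Lemma~\ref{lemm0}(2) the bracketed quantity equals $\tr(AB\bar\mu)-\langle H_A,H_B\rangle_{L^2}$, so it suffices to bound each of these two terms by $\|A\|\,\|B\|\,\|\bar\mu\|_{op}$.

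The first term is immediate Cauchy-Schwarz for the Hilbert-Schmidt/operator-norm pair:
\[
|\tr(AB\bar\mu)|=|\tr(A\cdot B\bar\mu)|\leq \|A\|\,\|B\bar\mu\|\leq \|A\|\,\|B\|\,\|\bar\mu\|_{op}.
\]
The second term is the substantive step. Here I would specialize Lemma~\ref{lemm0}(1) to $B=A$, integrate over $X$, and discard the nonnegative $\xi$-term:
\[
\|H_A\|_{L^2}^{2}\;\leq\;\|H_A\|_{L^2}^{2}+\int_X \|\xi_A\|_{FS}^{2}\,\Omega\;=\;\tr(A^{2}\bar\mu)\;\leq\;\|\bar\mu\|_{op}\,\|A\|^{2},
\]
and the analogous bound holds for $H_B$. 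A Cauchy-Schwarz in $L^{2}$ then yields $|\langle H_A,H_B\rangle_{L^2}|\leq \|\bar\mu\|_{op}\|A\|\,\|B\|$. Adding the two estimates produces the constant $2$ and the claim after taking $B=d\bar\mu(A)/\|d\bar\mu(A)\|$.

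I do not foresee any serious obstacle in carrying out this plan; the only mildly subtle observation is the step in which Lemma~\ref{lemm0}(1), together with the nonnegativity of the Fubini-Study tangential norm, upgrades the algebraic bound $\tr(A^{2}\bar\mu)\leq \|\bar\mu\|_{op}\|A\|^{2}$ into an honest $L^{2}$ estimate for the Hermitian endomorphism $H_A$. A cruder approach that bounded $\|H_A\|_{L^2}$ only through pointwise projection identities would produce a $\mathrm{Vol}(X)$ factor rather than $\|\bar\mu\|_{op}$, which would be too weak for the later spectral analysis of $P_k^{*}P_k$.
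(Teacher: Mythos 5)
Your proposal is correct and is essentially the paper's own argument: the paper applies Lemma \ref{lemm0}(2) with the choice $B=d\bar{\mu}(A)$, writing $\norm{d\bar{\mu}(A)}^2=\tr(A\,d\bar{\mu}(A)\bar{\mu})-\inner{H_A,H_{d\bar{\mu}(A)}}_{L^2}$, and then bounds the two terms exactly as you do, including the key estimate $\norm{H_A}_{L^2}^2\leq\tr(A^2\bar{\mu})\leq\norm{A}^2\norm{\bar{\mu}}_{op}$ obtained from Lemma \ref{lemm0} by discarding the nonnegative tangential term. Your duality formulation followed by the specialization $B=d\bar{\mu}(A)/\norm{d\bar{\mu}(A)}$ is just a rephrasing of that same computation.
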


\begin{proof}

\begin{align*} \norm{d \bar{\mu}(A)}^2&=\tr(d \bar{\mu}(A)^2)=\tr(A d\bar{\mu}(A)\bar{\mu})-\inner{H_{A},H_{d\bar{\mu}(A)}}_{L^2(X)}\\&\leq \tr(A d\bar{\mu}(A)\bar{\mu})+\norm{H_{A}}_{L^2}\norm{H_{d\bar{\mu}(A)}}_{L^2}\\&\leq \norm{A}\norm{\bar{\mu}}_{op}\norm{d\bar{\mu}(A)}+\norm{A}\norm{\bar{\mu}}_{op}\norm{d\bar{\mu}(A)}\\&\leq 2\norm{A}\norm{\bar{\mu}}_{op}\norm{d\bar{\mu}(A)}.\end{align*}
To go from the second to the third line, we used Lemma \ref{lemm0} and the fact that for Hermitian matrices $A,B,C$ one has the following inequality:
$$\abs{\tr (ABC)}\leq\Tr(A^2)^{1/2}\Tr(B^2)^{1/2}\norm{C}_{op}$$.
\end{proof}



Let $\mathcal{K}$ be the space of Hermitian metrics on $E$ and $\mathcal{B}_{k}$ be the space of Hermitian inner products on $H^{0}(X,E(k))$. Following Donaldson \cite{D3}, we can define the following maps:

\begin{itemize}

\item Define $$\Hilb:  \mathcal{K}\rightarrow \mathcal{B}_{k}$$ by

 $$\langle s,t \rangle _{\Hilb(h)}= \frac{N_{k}}{rV} \int_{X}
\langle s(x),t(x) \rangle_{h \otimes \sigma^k }\Omega(x),$$
for any $s,t \in H^{0}(X,E(k))$ where $N_{k}=\dim(H^{0}(X,E(k))) $ and $V=\textrm{  Vol}(X,\omega)$. Note that
$\Hilb$ only depends on the volume form $\Omega=\displaystyle \frac{\omega^{n}}{n!}$.

\item For the metric $H$ in $\mathcal{B}_{k}$, $\FS(H)$ is the unique metric
on $E$ such that  $$\sum s_{i}\otimes s_{i}^{*_{\FS(H) \otimes \sigma^k}}=Id_E,$$ where
$s_{1},...,s_{N_{k}}$ is an orthonormal basis for $H^{0}(X,E(k))$ with
respect to $H$. This gives the map $\FS:\mathcal{B}_{k} \rightarrow \mathcal{K}$.

\item Define a map $$\Phi_{k}: \mathcal{K} \rightarrow  \mathcal{K}$$ by $\Phi_{k}(h)=\FS \circ \Hilb(h).$
\end{itemize}

 It is not difficult to see that a balanced metric $H \in \mathcal{B}_k$ is a fixed point of the map $\Hilb \circ \FS$ map and in that case $\FS(H)$ is a fixed point of the map $\Phi_k$.

\subsection{Some asymptotic results}

Let  $(E,h)$ be a Hermitian holomorphic vector bundle on $X$. 
Let us consider $\Pi_k:L^2(E(k)) \rightarrow H^0(E(k))$ the orthogonal projection onto the space of holomorphic sections. Its integral kernel, the  (restriction to the diagonal of the) Bergman kernel,  depends on the choice of the $L^2$ inner product. When the $L^2$ inner product is obtained from the fibrewise metric $h\otimes \sigma^k$ and the volume form $\omega^n/n!$, then the Bergman kernel is denoted $B_k\in C^{\infty}(X,End(E(k)))$. More precisely, 
$$B_{k}=\sum_{i=1}^{N} s_{i}\otimes s_{i}^{*_{h\otimes \sigma^k}},$$ where $s_{1}, \dots, s_{N}$ is a basis for $H^0(E(k))$ with respect to  $\frac{Vr}{N}\Hilb(h)$.

 The following result has been proved in   \cites{Ze, Ca, Lu} and \cite{Bou} where the first term of the asymptotic expansion of the Bergman function is identified. (See also \cite[Theorem 4.1.3]{M-M} for the second term). The bundle case was studied later by X. Wang \cite{W2} using holomorphic peak sections techniques.
 \begin{thm}{\label{tian-b}}
For any Hermitian metric $h$ on $E$ and K\"ahler form $\omega \in
c_{1}(L)$,
There exist smooth endomorphisms  $A_{i}(h,\omega) \in
C^{\infty}(X,End(E))$ such that the following asymptotic expansion holds as
$k \longrightarrow \infty$,
$$B_{k}(h,\omega) \sim
k^m+A_{1}(h,\omega)k^{m-1}+\dots.$$
In particular
$$A_{1}(h,\omega)= \frac{\sqrt{-1}}{2\pi} \Lambda_\omega F_{(E,h)}+\frac{1}{2}
S(\omega) Id_{E},$$ where $\Lambda_\omega $ is the trace operator acting
on $(1,1)$-forms with respect to the K\"ahler form $\omega$,
$F_{(E,h)}$ is the curvature of $(E,h)$ and $S(\omega)$ is the
scalar curvature of $\omega$.

Moreover, the asymptotic expansion holds in $C^{\infty}$. More
precisely, for any positive integers $a$ and $p$, there exists a
positive constant $C_{a,p,\omega,h}$ such that
$$ \norm{B_{k}(h,\omega)-\big(k^m+\dots+A_{p}(h,\omega)k^{m-p} \big)}_{C^a}\leq C_{a,p,\omega,h} k^{m-p-1}.$$ Moreover the expansion is uniform in
the sense that there exists a positive integer $s$ such that if
$h$ and $\omega$ run in a bounded family in $C^s$ topology and
$\omega$ is bounded from below, then the constants $C_{a,p,\omega,h}$
are bounded by a constant depending only on $a$ and $p$.

\end{thm}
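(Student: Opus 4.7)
The plan is to follow Tian's peak section method in the vector bundle formulation of X. Wang \cite{W2}, and then to upgrade the pointwise expansion to $C^\infty$ via off-diagonal Bergman kernel estimates. Fix $x_0 \in X$ and choose K\"ahler normal coordinates $(z_1,\dots,z_n)$ centered at $x_0$ together with a holomorphic frame $\{e_a\}_{a=1}^r$ of $E$ and a holomorphic frame $e_L$ of $L$ normalized so that
\begin{align*}
\omega &= \tfrac{\sqrt{-1}}{2}\sum_j dz_j\wedge d\bar z_j + O(|z|^2),\\
h_{a\bar b} &= \delta_{ab} - \sum_{j,k}(F_{(E,h)})_{a\bar b\, j\bar k}(x_0)\,z_j\bar z_k + O(|z|^3),
\end{align*}
and $\sigma(e_L,e_L)=e^{-|z|^2+O(|z|^4)}$. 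For each multi-index $\alpha$ of length at most $p$, the smoothly cut-off local section $\chi_k(z)\,z^\alpha\,e_a \otimes e_L^{\otimes k}$, supported in a ball of radius slightly larger than $k^{-1/2}$, is not globally holomorphic, but H\"ormander's $L^2$ estimate with weight $e^{-k|z|^2}$ corrects it to a true holomorphic section of $E(k)$ with exponentially small error. This produces a family of peak sections concentrated at $x_0$ at the scale $k^{-1/2}$.

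Second, I would expand $B_k(x_0)$ by evaluating $\sum_i s_i(x_0)\otimes s_i^{*_{h\otimes\sigma^k}}(x_0)$ on an $L^2$-orthonormal basis obtained by Gram--Schmidt from these peak sections. After the rescaling $z=w/\sqrt{k}$, the inner products $\langle \chi_k z^\alpha e_a e_L^k,\,\chi_k z^\beta e_b e_L^k\rangle$ reduce to Gaussian integrals on $\C^n$ whose asymptotic expansion in $k^{-1}$ encodes both the bundle curvature $F_{(E,h)}$ (through the expansion of $h_{a\bar b}$) and the scalar curvature of $\omega$ (through the deviation of $\omega^n/n!$ from the flat volume). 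Collecting the two leading terms reproduces the coefficients $k^n$ and $A_1(h,\omega) = \tfrac{\sqrt{-1}}{2\pi}\Lambda_\omega F_{(E,h)} + \tfrac{1}{2} S(\omega)\,Id_E$, and in general each $A_i$ is a universal polynomial in the curvature of $(E,h)$, the Ricci and scalar curvatures of $\omega$, and their covariant derivatives at $x_0$.

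Third, I would upgrade the pointwise expansion to $C^a$ estimates through the off-diagonal exponential bound $|B_k(x,y)|_{h\otimes\sigma^k}\leq C k^n e^{-c\sqrt{k}\,d(x,y)}$ combined with Cauchy-type inequalities for holomorphic sections, so that derivatives of $B_k$ at $x_0$ are again controlled by the peak section computation. Equivalently one may invoke the heat kernel and rescaling analysis of \cite{M-M}, which yields $C^\infty$ convergence directly. Uniformity in bounded families comes essentially for free: H\"ormander's estimate depends only on a positive lower bound for $\omega$ and on a finite number of derivatives of $h$, the Gaussian integrals are polynomial in local derivatives of the data, and the off-diagonal decay is uniform once $h$ and $\omega$ are controlled in a sufficient $C^s$ norm. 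The main technical obstacle I expect is the careful orthonormalization of the matrix-valued peak sections and the clean separation of the bundle curvature contribution from the scalar curvature contribution in $A_1$, precisely the point at which the bundle case of \cite{W2} departs nontrivially from the line bundle case of Tian--Zelditch--Lu.
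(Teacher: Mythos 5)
Your sketch is essentially the proof the paper relies on: the paper does not prove Theorem \ref{tian-b} itself but quotes it from Zelditch--Catlin--Lu--Bouche, from X.~Wang \cite{W2} for the bundle case via exactly the peak-section/H\"ormander argument you outline, and from Ma--Marinescu \cite{M-M} for the full $C^\infty$ expansion and the uniformity in bounded families, which is also how you close the argument. The only small imprecision is attributing the $\tfrac12 S(\omega)\,Id_E$ term solely to the deviation of $\omega^n/n!$ from the flat volume; in the peak-section computation it arises from the fourth-order Taylor coefficients of the local K\"ahler potential (i.e.\ the curvature of $\omega$) together with the volume-form expansion, but this does not affect the validity of your approach.
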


Let $f,g\in C^{\infty}(X,End(E))$. We define $$T_{k,f}=\Pi_k\circ f \circ \Pi_k :  L^2(X,E(k))\to L^2(X,E(k)),$$ where $$\Pi_k: L^2(X,E(k)) \to H^0(E (k)) $$ is the orthogonal projection with respect to $\frac{Vr}{N}\Hilb(h)$. We also define $$T_{k,f,g}=T_{k,f} \circ T_{k,g} :L^2(X,E(k))\to L^2(X,E(k)).$$
Let $s_{1}, \dots s_{N}$ be an orthonormal basis  for $H^0(E(k))$ with respect to $\frac{Vr}{N}\Hilb(h)$.  Let $s \in L^2(X,E(k))$ and $y \in X$, then 
\begin{align*}(T_{k,f}s)(y)&=\big(  \Pi_k\circ f \circ \Pi_k \big)(s) (y) = \Pi_k\circ f \Big(\sum_{i=1}^{N} \inner{s,s_{i}}_{L^2} s_{i} \Big)(y)\\&=\sum_{i,j=1}^{N}\int_{X \times X} \inner{fs_{i}, s_{j}}(z)\inner{s,s_{i}}(x)\Omega(x)\Omega(z) s_{j}(y)\\&=\int_{X} K_{k,f}(x,y)s(x)\Omega(x),\end{align*} 

where, $$K_{k,f}(x,y)=\int_{X} \sum_{i,j=1}^{N} \inner{fs_{i}, s_{j}}(z)s_{j}(y)\otimes s_{i}^*(x) \Omega(z).  $$ This shows that $ K_{k,f}(x,y)$ is the integral kernel for the operator $T_{k,f}$, i.e. for any $y\in X$ and $s \in L^2(X,E(k))$, we have $$(T_{k,f}s)(y)=\int_{X} K_{k,f}(x,y)s(x)\Omega(x).$$
We also have, \begin{align*}  (T_{k,f,g}s)(y)&=\int_{X} K_{k,f}(x,y)(T_{k,g}s)(x)\Omega(x)\\&=\int_{X} \Big(\int_{X} K_{k,f}(x,y)K_{k,g}(z,x)s(z)\Omega(z)\Big)\Omega(x)\\&=\int_{X} K_{k,f,g}(x,y)s(x)\Omega(x),\end{align*}
where,

$$K_{k,f,g}(x,y)=\int_{X} K_{k,f}(z,y) \circ K_{k,g}(x,z)\Omega(z).$$ 
This shows that $ K_{k,f,g}(x,y)$ is the integral kernel for the operator $T_{k,f,g}$, i.e. for any $y\in X$ and $s \in L^2(X,E(k))$, we have  $$(T_{k,f,g}s)(y)=\int_{X} K_{k,f,g}(x,y)s(x)\Omega(x).$$ 

We refer to \cites{M-M2,Fi3} for the next proposition.
\begin{prop}{\label{toeplitzexp}}
Let $f,g\in C^{\infty}(X,End(E))$. The restriction of the kernels $K_{k,f}$ and $K_{k,f,g}$ to the diagonal have the following asymptotic expansions as $k\rightarrow + \infty :$
$$
K_{k,f}=b_{f,0}k^n+b_{f,1}k^{n-1}+b_{f,2}k^{n-2}+O(k^{n-3})
$$
where
\begin{eqnarray*}
b_{f,0}&=&f,\\
b_{f,1}&=&\frac{S(\omega)}{2 }f + \frac{\sqrt{-1}}{4\pi} (\Lambda_{\omega}F_{h_E} f + f \Lambda_{\omega}F_{h_E}) - \frac{1}{4\pi} \Delta^E f.\\
\end{eqnarray*}
In the case when $E$ is a line bundle, we have that
$$
b_{f,2}=A_2(\omega)f+\frac{1}{32\pi^2}\Delta^2f-\frac{1}{8\pi^2}S(\omega)\Delta f
	+\frac{1}{8\pi^2}(Ric,\ddbar f).
$$
Moreover, $$K_{k,f,g}=b_{0,f,g}k^n + b_{1,f,g}k^{n-1}+O(k^{n-2})$$ where
\begin{eqnarray*}
b_{0,f,g}&=&fg,\\
b_{1,f,g}&=& \frac{1}{2} S(\omega)fg  +\frac{\sqrt{-1}}{4\pi} (\Lambda_{\omega}F_{h_E} fg + fg\Lambda_{\omega} F_{h_E}) \\
&&- \frac{1}{4\pi}(f \Delta^E g +(\Delta^E f)g )+ \frac{1}{2\pi} \langle \bar{\partial}^E f, \nabla^{1,0} g\rangle_\omega.
\end{eqnarray*}
Moreover these expansions are uniform in the endomorphisms $f,g$ if $f,g$ vary in a subset of $C^{\infty}(X,End(E))$ which is bounded for the $C^{\infty}$-topology. Eventually the expansions are uniform when the metric $h$ on $E$ varies in a set of uniformly equivalent metrics lying in a bounded set for the $C^{\infty}$-topology.
\end{prop}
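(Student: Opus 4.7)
The plan is to derive both expansions from the off-diagonal asymptotics of the Bergman kernel $B_k(x,y)$ on the bundle $E(k)$, following the Ma--Marinescu approach used in \cite{M-M2} and adapted by Fine in \cite{Fi3}. The starting observation is that the Toeplitz kernel can be rewritten as a double convolution of Bergman kernels with the endomorphism inserted:
\begin{equation*}
K_{k,f}(x,y) \;=\; \int_X B_k(y,z)\, f(z)\, B_k(z,x)\,\Omega(z),
\end{equation*}
and similarly $K_{k,f,g}(x,y)=\int_X K_{k,f}(z,y)\,K_{k,g}(x,z)\,\Omega(z)$. The proposition concerns the restriction $x=y$, so I would work point by point: fix $x_0\in X$, use Kähler normal coordinates centred at $x_0$, and trivialize $E$ and $L$ by parallel transport along rays.

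In these coordinates the Ma--Marinescu off-diagonal expansion gives $B_k(x_0,z)$ a Gaussian profile $\sim k^n e^{-\pi k|z|^2/2}$ times a polynomial series in $k^{-1/2}$, with coefficients built from $F_h$, the scalar and Ricci curvatures of $\omega$, and their covariant derivatives. After rescaling $u=\sqrt{k}\,z$, the integral localizes to a $k^{-1/2}$-neighbourhood of $x_0$ up to $O(k^{-\infty})$ errors. Taylor-expanding $f$ at $x_0$ and pairing it with the Gaussian profile reduces the leading orders to elementary Gaussian moments. The $k^n$ term recovers $f(x_0)$. At order $k^{n-1}$ two contributions appear: the sub-leading coefficient of the Bergman kernel (the $A_1$ of Theorem \ref{tian-b}) supplies the curvature and scalar-curvature terms, symmetrized because $B_k$ appears on both sides of $f$, while the quadratic Taylor term of $f$ produces a Gaussian second moment that assembles precisely into $-\frac{1}{4\pi}\Delta^E f$ (this is essentially the Bochner Laplacian because the Gaussian is built from $g$). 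For the line bundle case, pushing one order further requires Lu's second coefficient $b_{f,2}$; matching it to the stated formula involves the standard identity $[\Lambda_\omega,\partial\bar\partial]=i(\mathrm{Ric}-\Delta)$ and integration by parts to group the fourth-order, second-order, and Hessian-pairing terms.

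For $K_{k,f,g}$ I would either iterate the above directly on the composed double integral, or exploit the Toeplitz calculus: $T_{k,f}T_{k,g}=T_{k,c_0(f,g)}+k^{-1}T_{k,c_1(f,g)}+O(k^{-2})$, with $c_0(f,g)=fg$ and $c_1(f,g)$ the standard first-order star-product coefficient of Bordemann--Meinrenken--Schlichenmaier adapted to vector bundles. Computing $c_1(f,g)$ in Kähler normal coordinates yields the curvature and scalar terms of $b_{f,g}$ together with $-\frac{1}{4\pi}(f\Delta^E g+(\Delta^E f)g)+\frac{1}{2\pi}\langle\bar\partial^E f,\nabla^{1,0}g\rangle_\omega$, the last summand arising from the fact that holomorphic derivatives of $f$ paired with antiholomorphic derivatives of $g$ survive the Gaussian integration against the leading Bergman profile while the symmetric combination is absorbed into $\Delta^E(fg)$.

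The main technical obstacle is the bookkeeping of the subleading terms, in particular isolating the $b_{f,2}$ coefficient in the line bundle case and verifying the exact form of $c_1(f,g)$ for bundle-valued Toeplitz operators. The uniformity statement in $f,g$ and in $h$ is essentially automatic once one has these expansions with remainders controlled in $C^a$ by $C^s$-norms of the data: the Ma--Marinescu expansion of $B_k$ is itself uniform on $C^s$-bounded families with $\omega$ bounded below, all Gaussian moments are polynomial in the Taylor coefficients of $f,g$, and the localization argument uses only the uniform exponential decay of $B_k$ away from the diagonal, which holds on such families.
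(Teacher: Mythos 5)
Your approach is sound, but note how it sits relative to the paper: the paper does not prove this proposition at all — it derives the identity expressing $K_{k,f}$ and $K_{k,f,g}$ as kernels of $T_{k,f}$ and $T_{k,f}\circ T_{k,g}$ and then quotes the expansions from \cite{M-M2} and \cite{Fi3}. What you have written is essentially a reconstruction of the argument in those references: your starting convolution identity $K_{k,f}(x,y)=\int_X B_k(y,z)f(z)B_k(z,x)\,\Omega(z)$ agrees with the paper's computation of the kernel, and the subsequent scheme (off-diagonal Ma--Marinescu expansion in normal coordinates, rescaling $u=\sqrt{k}\,z$, localization up to $O(k^{-\infty})$, Gaussian moments of the Taylor expansion of $f$, and either iteration or the first-order star-product coefficient for the composed kernel) is exactly the route of the Berezin--Toeplitz kernel calculus that the citation relies on, including the uniformity statements, which do follow from the uniform version of the Bergman expansion as you say. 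So you are supplying a proof sketch where the paper supplies a citation, and the approach would work. Two caveats. First, the genuinely hard content of the proposition is the exact values of $b_{f,1}$, $b_{f,2}$ and $b_{1,f,g}$, and your sketch defers precisely this to ``bookkeeping''; in particular the $k^{n-1}$ term is not just ``$A_1$ on both sides plus the quadratic Taylor term of $f$'' — cross terms pairing first derivatives of $f$ with the first-order off-diagonal corrections of $B_k$ also contribute at that order and must be tracked to land on the Bochner Laplacian $\Delta^E$ rather than some other second-order operator, and the bundle-valued $c_1(f,g)$ is not literally the scalar Bordemann--Meinrenken--Schlichenmaier coefficient. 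Second, a small slip in your prose: the surviving cross term in $b_{1,f,g}$ is $\frac{1}{2\pi}\langle\bar\partial^E f,\nabla^{1,0}g\rangle_\omega$, i.e.\ the $(0,1)$-derivative of $f$ paired with the $(1,0)$-derivative of $g$, not ``holomorphic derivatives of $f$ paired with antiholomorphic derivatives of $g$'' as you wrote; with the kernel ordering $B_k\,f\,B_k\,g\,B_k$ it is this combination that survives the Gaussian integration.
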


\subsection{The Lichnerowicz operator}
Later, we shall need to introduce the  Lichnerowicz operator.
Let $L\rightarrow X$ be an ample holomorphic line bundle over a compact K\"ahler manifold $X$. Fix a Hermitian metric $h$ on $L$ such that its curvature is a K\"ahler form $\omega$. Let $\D: C^\infty(X,\R)\rightarrow\Omega^{0,1}(TX)$ be the operator defined by
$$
	\D(f)=\bar\partial(v_f)
$$
where $v_f$ is the Hamiltonian vector field corresponding to $f$ via $\omega$. Intuitively, $\D(f)$ measures the failure of the Hamiltonian vector field $v_f$ of being holomorphic. Write $\D^*$ for its $L^2$-adjoint. Furthermore, denote by $D$ the linearization of the scalar curvature. More precisely, if $h_t$ is a path of Hermitian metrics on $L$ given by $h_t=e^{ft}$, then we define
$$
	D(f)=\frac{\partial S(h_t)}{\partial t}.
$$
\begin{lem}
\label{linerarization_scalar_curv}
$$
	D(f)=\frac{1}{4\pi}\left(\Delta^2f-2(Ric,2\ddbar f)\right)
$$
and
$$
	\D^*\D(f)=2\pi\left(D(f)+(dS,df)\right)
$$
\end{lem}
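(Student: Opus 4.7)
The plan is to treat the two identities separately, both resting on classical K\"ahler-geometric computations in local holomorphic coordinates.

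\emph{For the first identity}, the normalisation $\frac{1}{2\pi}\ddbar\log\sigma=\omega$ shows that the family $h_t=e^{ft}\sigma$ produces K\"ahler forms $\omega_t=\omega+\frac{t}{2\pi}\ddbar f$, so $\dot\omega|_{t=0}=\frac{1}{2\pi}\ddbar f$. Writing the scalar curvature locally as $S=-g^{i\bar j}\partial_i\partial_{\bar j}\log\det g$, I would differentiate in $t$ using the standard identities
$$
\partial_t g_t^{i\bar j}\bigr|_{t=0}=-g^{i\bar k}g^{\ell\bar j}(\dot g)_{\bar k\ell},\qquad
\partial_t\log\det g_t\bigr|_{t=0}=g^{i\bar j}(\dot g)_{i\bar j}.
$$
Contracting the first against $R_{i\bar j}$ produces the Ricci pairing $(\Ric,2\ddbar f)$ (up to sign), while applying $-g^{i\bar j}\partial_i\partial_{\bar j}$ to the second produces the bi-Laplacian $\Delta^2 f$. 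Propagating the $\tfrac{1}{2\pi}$ normalisation then gives the overall factor $\tfrac{1}{4\pi}$ in the statement.

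\emph{For the second identity}, I would unwind the definition $\D(f)=\bar\partial v_f$, where in local coordinates $v_f=\sqrt{-1}g^{i\bar j}f_{\bar j}\partial_i-\sqrt{-1}g^{i\bar j}f_i\partial_{\bar j}$, so that $\D f$ is a $T^{1,0}$-valued $(0,1)$-form with components $\partial_{\bar k}(g^{i\bar j}f_{\bar j})$. Writing $\D^*\D f=\bar\partial^*\bar\partial(v_f^{1,0})$ and applying the standard Weitzenb\"ock formula on this bundle produces a connection-Laplacian piece plus a zeroth-order curvature correction involving the Ricci tensor. Using the K\"ahler identity relating $[\bar\partial^*,\partial]$ to contraction with the Ricci form, together with the second Bianchi identity that produces $dS$ when the Ricci pairing is differentiated, the second-order piece rearranges to $\tfrac{1}{2}\Delta^2 f+(\Ric,\ddbar f)$ plus the first-order correction $(dS,df)$. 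Comparing with (1) then gives the stated rescaling by $2\pi$.

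\emph{Main obstacle.} The calculation itself is routine, but the bookkeeping is what I expect to be delicate: the paper's normalisations $\omega=\tfrac{1}{2\pi}\ddbar\log\sigma$ and $\ddbar=\sqrt{-1}\bar\partial\partial=-\sqrt{-1}\partial\bar\partial$ differ from the most common ones by exactly the factors of $2\pi$ and by a sign, and the constants $\tfrac{1}{4\pi}$ and $2\pi$ in the statement arise precisely from propagating these through the Weitzenb\"ock computation. An alternative is to recognise (2) as an instance of the classical Calabi--Lichnerowicz identity
$$
2\,\D^*\D f + \langle dS,df\rangle \;=\; -\,(\text{linearisation of }S),
$$
suitably rescaled by the paper's conventions; this shortens the exposition but still requires care to reconcile normalisations with a reference such as Besse's book or Fine's \cite{Fi3}.
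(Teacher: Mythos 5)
The paper does not actually prove this lemma: it only records the two identities and refers the reader to Donaldson \cite{D4} and Fine \cite{Fi3}, where the linearisation of the scalar curvature and the Lichnerowicz identity are derived. Your sketch is therefore not in conflict with anything in the paper; it is essentially the standard derivation carried out in those references: differentiate $S=-g^{i\bar j}\partial_i\partial_{\bar j}\log\det g$ along $\omega_t=\omega+\frac{t}{2\pi}\ddbar f$ for the first identity, and for the second apply a Weitzenb\"ock/integration-by-parts argument to $\D f=\bar\partial v_f$, with the contracted second Bianchi identity producing the $(dS,df)$ term. Both steps are sound in outline, and your observation that $h_t=e^{tf}\sigma$ gives $\dot\omega=\frac{t}{2\pi}\ddbar f$ at first order correctly identifies where the factors of $2\pi$ enter. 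The one caveat is the point you flag yourself: since these identities are classical up to constants, the entire content of the lemma in this paper's normalisations ($\omega=\frac{1}{2\pi}\ddbar\log\sigma$, $\ddbar=\sqrt{-1}\bar\partial\partial$, and the paper's $\Delta$ differing from $\Delta_{\bar\partial}$ by a factor of $2$ on functions) is the verification of the constants $\frac{1}{4\pi}$, the coefficient in $-2(\Ric,2\ddbar f)$, and the $2\pi$, which your proposal defers rather than carries out; a complete write-up would either do that bookkeeping explicitly or, as you suggest in your alternative and as the paper in fact does, quote the Calabi--Lichnerowicz identity from \cite{Fi3} after matching conventions.
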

See for example \cite{D4} and \cite{Fi3} as a reference.

\section{ The Gradient of the moment map $\bar{\mu}$ } \label{Sect1}

In this section, we fix an Hermitian metric $h$ on $E$. It induces an Hermitian inner products $\Hilb(h)$ on $H^0(X,E(k))$ for each $k$. Choose any orthonormal basis of $H^{0}(X,E(k))$ to identify $ \Gr(r, H^{0}(X,E(k))^*)$ with the standard Grassmannian $\Gr(r,N_{k})$. This gives us a sequence of embeddings $\iota_{k}: X \rightarrow \Gr(r, N_{k})$. Let us recall the construction of the operator $P_k^*P_k$ in this setting.
Any $A \in \sqrt{-1}\Lie{u}(N_{k})$ defines a holomorphic vector field $\zeta_A$ on $\Gr(r,N_{k})$. As before, we define $$P_k : \sqrt{-1} \Lie{u}(N)\rightarrow C^{\infty}(X, {TGr(r,N_{k})}_{|\iota(X)})$$ by
$P_k(A)=P(A)=\xi_A|_{X}$. Using the Killing form on $\sqrt{-1}\Lie{u}(N_k)$ and the $L^2$ inner product on $C^{\infty}(X, {TGr(r,N_{k})}_{|\iota(X)})$ induced by the Fubini-Study metric on the tangent space  we get an adjoint map $$P^{*}_k : C^{\infty}({TGr(r,N_{k})}_{|\iota_{k}(X)})\rightarrow \sqrt{-1} \Lie{u}(N_{k}).$$  Thus, we have a sequence of maps $P^*_k P_k : \sqrt{-1} \Lie{u}(N) \to  \sqrt{-1} \Lie{u}(N).$






\begin{definition}

Let $h$ be a metric on $E$ and $\phi \in C^{\infty}(X,End(E))$ Hermitian with respect to $h$. Define $$Q_{\phi,k}=\frac{d}{dt}|_{t=0} \Hilb(h(I+t\phi)).$$

\end{definition}

\begin{lem}
Let $s_{1}, \dots s_{N}$ be an orthonormal basis  for $H^0(E(k))$ with respect to $\frac{Vr}{N}\Hilb(h)$, then $$\big(Q_{\phi,k} \big)_{ij}=\int_{X} \inner{s_{i},\phi s_{j}}_{h \otimes \sigma^k}\Omega.$$

\end{lem}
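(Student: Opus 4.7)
The plan is a direct unfolding of the definition of $\Hilb$. The deformation $h_t = h(I+t\phi)$ is by construction the Hermitian form on $E$ satisfying $\inner{u, v}_{h_t} = \inner{(I+t\phi)u, v}_h$ (equivalently $\inner{u, (I+t\phi)v}_h$, since $\phi$ is $h$-Hermitian). Tensoring $\phi$ with $\Id_{L^k}$, the same identity holds for $h_t \otimes \sigma^k$, so differentiating at $t = 0$ yields
\[
\frac{d}{dt}\bigg|_{t=0}\inner{u, v}_{h_t \otimes \sigma^k}
= \inner{\phi u, v}_{h \otimes \sigma^k}
= \inner{u, \phi v}_{h \otimes \sigma^k}.
\]
Applying this pointwise to $u = s_i$, $v = s_j$, integrating against $\Omega$ and plugging into the definition of $\Hilb$ gives
\[
\frac{d}{dt}\bigg|_{t=0}\Hilb(h_t)(s_i, s_j)
= \frac{N_k}{rV}\int_X \inner{s_i, \phi s_j}_{h \otimes \sigma^k}\Omega.
\]

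It then remains to convert this sesquilinear-form derivative into the Hermitian matrix $Q_{\phi,k}$. Under the identification $T_{\Hilb(h)}\mathcal{B}_k \simeq \sqrt{-1}\Lie{u}(N_k)$, in which $\Hilb(h)$ itself is encoded by the identity, a tangent vector $\dot H$ is attached to the Hermitian matrix whose entries in the basis $\{s_i\}$ are $\frac{Vr}{N_k}\dot H(s_i, s_j)$; the factor $\frac{Vr}{N_k}$ appears precisely because the $s_i$ are orthonormal for $\frac{Vr}{N_k}\Hilb(h)$ rather than for $\Hilb(h)$ itself. This factor cancels the $\frac{N_k}{rV}$ from the previous display, producing the stated formula
\[ (Q_{\phi,k})_{ij} = \int_X \inner{s_i, \phi s_j}_{h \otimes \sigma^k}\Omega. \]

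The argument presents no genuine obstacle; the only subtlety is bookkeeping the two mutually inverse appearances of the normalization constant $\frac{N_k}{rV}$, which enters once in the definition of $\Hilb$ and once (inversely) in the basis normalization, and which must cancel in the final matrix expression.
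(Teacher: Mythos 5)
Your computation is correct and is precisely the routine unwinding intended here — the paper states this lemma without proof, and your argument (differentiate $\Hilb(h(I+t\phi))$, then express the resulting Hermitian form as a matrix) is the obvious one. The only real content is the normalization point you identify: the tangent vector must be read through the identification in which $\Hilb(h)$ corresponds to the identity matrix in the $L^2$-orthonormal basis (equivalently, $Q_{\phi,k}$ is the $\Hilb(h)$-Hermitian endomorphism representing the form derivative), and this is indeed the convention consistent with how $(Q_{\phi,k})_{ij}=\int_X\inner{s_i,\phi s_j}_{h\otimes\sigma^k}\Omega$ is used later (e.g.\ in forming $H_{Q_{\phi,k}}$ and in the proof of Theorem \ref{thm1}), so the factors $\frac{N_k}{rV}$ and $\frac{rV}{N_k}$ cancel exactly as you say.
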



\begin{thm}\label{thm1} Let $h$ be a metric on $E$ and $\phi \in C^{\infty}(X,End(E))$ Hermitian with respect to $h$. 
Then we have the asymptotics
$$
	\tr\left( Q_{\phi,k} P_k^*P_k Q_{\phi,k}\right)=a_1k^{-1}+a_2 k^{-2}+O(k^{-3})  
$$
where the leading order term $a_1$ is given by
\begin{align*}
a_1&= \frac{1}{4\pi}  \int_{X}\tr\big( \phi\Delta^E \phi \big)\, \Omega,\\
&= \frac{1}{2\pi}  \int_{X}\tr\big( \phi\Delta_\partial \phi \big)\, \Omega,\\
&= \frac{1}{2\pi}  \int_{X}\tr\big( \phi\Delta_{\bar\partial} \phi \big) \, \Omega.
\end{align*}
Moreover, in the case where $E$ is a line bundle, the second order coefficient is given for all $\phi\in C^\infty(X,\R)$ by 
$$
	a_2=\frac{1}{16\pi^2}\int_X \phi \D^*\D \phi -2\phi\Delta^2 \phi\,  \Omega.
$$
These estimates are uniform in the endomorphism $\phi$ if $\phi$ varies in a subset of $C^{\infty}(X,End(E)_H)$ which is bounded for the $C^{\infty}$-topology. The estimate is uniform when the metric $h$ on $E$ varies in a set of uniformly equivalent metrics lying in a compact set for the $C^{\infty}$-topology.
\end{thm}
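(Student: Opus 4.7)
The plan is to express $\tr(Q_{\phi,k}P_k^*P_kQ_{\phi,k})$ as a difference of two scalar quantities each of which can be expanded through the Toeplitz-kernel asymptotics of Proposition \ref{toeplitzexp} and the Bergman-kernel expansion of Theorem \ref{tian-b}. The corollary preceding Lemma \ref{lemm0} identifies $P_k^*P_k$ with the differential $d\bar\mu$, so applying Lemma \ref{lemm0}(2) with $A=B=Q_{\phi,k}$ yields the key identity
\[
\tr(Q_{\phi,k}P_k^*P_kQ_{\phi,k})=\tr(Q_{\phi,k}^2\bar\mu)-\|H_{Q_{\phi,k}}\|_{L^2}^2.
\]
Both terms on the right are individually of order $k^0$, so the claimed $O(k^{-1})$ bound will follow from a cancellation of leading orders.

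In the $\frac{Vr}{N}\Hilb(h)$-orthonormal basis $\{s_i\}$ the matrix $Q_{\phi,k}$ coincides, up to transpose, with the matrix of the Toeplitz operator $T_{k,\phi}=\Pi_k\phi\Pi_k$. Similarly, writing the Fubini--Study inner product as $\inner{u,v}_{FS}=\inner{B_k^{-1}u,v}_{h\otimes\sigma^k}$, the matrix $\bar\mu_{ij}=\int_X\inner{s_i,s_j}_{FS}\Omega$ is the Toeplitz matrix with symbol $B_k^{-1}$. Therefore $\tr(Q_{\phi,k}^2\bar\mu)$ equals $\tr(T_{k,\phi}T_{k,\phi}T_{k,B_k^{-1}})$; inverting the asymptotic expansion of $B_k$ from Theorem \ref{tian-b} and using the composition rule for Toeplitz operators implicit in Proposition \ref{toeplitzexp} produces a full asymptotic series. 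For the second term, a pointwise computation using Definition \ref{H_A} gives $\tr_{FS}(H_AH_B)=\tr(A\mu B\mu)$, whence
\[
\|H_{Q_{\phi,k}}\|_{L^2}^2=\int_X\tr(Q_{\phi,k}\,\mu(x)\,Q_{\phi,k}\,\mu(x))\,\Omega(x)
\]
with $\mu(x)_{ij}=\inner{B_k^{-1}s_i,s_j}_{h\otimes\sigma^k}(x)$; expanding the indices in the basis $\{s_i\}$ and re-assembling yields an integral of a pointwise trace of Toeplitz-type kernels, again controlled asymptotically by Proposition \ref{toeplitzexp}.

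The leading $k^0$ contributions to both quantities equal $\int_X\tr(\phi^2)\Omega$ and cancel. The subleading coefficients combine, via the explicit formula for $b_{1,\phi,\phi}$ in Proposition \ref{toeplitzexp} and integration by parts, to give $a_1=\frac{1}{4\pi}\int_X\tr(\phi\Delta^E\phi)\Omega$; the two alternative expressions in terms of $\Delta_\partial$ and $\Delta_{\bar\partial}$ follow from the Weitzenb\"ock identity recorded in the introduction. In the line bundle case, carrying the expansion to order $k^{-2}$ and using the explicit $b_{f,2}$ from Proposition \ref{toeplitzexp} together with Lemma \ref{linerarization_scalar_curv} reorganises the Ricci- and $\Delta^2$-terms into the $\D^*\D\phi$ combination appearing in $a_2$. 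Uniformity in $\phi$ and $h$ is inherited from the corresponding uniformity in Proposition \ref{toeplitzexp} and Theorem \ref{tian-b}. The main obstacle is precisely the bookkeeping of this cancellation: $\tr(Q^2\bar\mu)$ and $\|H_Q\|^2_{L^2}$ are each of order $k^0$ and must agree to leading order, and the pointwise integrand $\tr(Q\mu Q\mu)$ in $\|H_Q\|^2_{L^2}$ involves products of the matrix kernel $\mu(x)$ that cannot be read off directly from Proposition \ref{toeplitzexp} and require an auxiliary unfolding into the basis $\{s_i\}$ before applying the Toeplitz diagonal expansion.
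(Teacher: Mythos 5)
Your proposal is correct and follows essentially the same route as the paper: the identity $\tr(Q_{\phi,k}P_k^*P_kQ_{\phi,k})=\tr(Q_{\phi,k}^2\bar\mu)-\|H_{Q_{\phi,k}}\|_{L^2}^2$ from Lemma \ref{lemm0}, the recognition that the first term is $\int_X\tr\bigl(K_{\phi,\phi,k}B_k^{-1}\bigr)\Omega$ and that pointwise $H_{Q_{\phi,k}}=K_{k,\phi}\circ B_k^{-1}$ (your ``unfolding and re-assembling''), the cancellation of the $k^0$ terms, the Weitzenb\"ock/Akizuki--Nakano step for the three forms of $a_1$, and the order-$k^{-2}$ bookkeeping with $b_{f,2}$, integration by parts and Lemma \ref{linerarization_scalar_curv} for $a_2$ are exactly the paper's argument.
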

\begin{proof}

For any $A,B \in \sqrt{-1}\Lie{u}(N)$, we have $$\tr(AP_k^*P_k(B))=\inner{P_k(A),P_k(B)}=\int_{X}\inner{\xi_{A},\xi_{B}}_{h_{FS}} \Omega .$$
Therefore,  using Lemma \ref{lemm0},
\begin{align}
\label{formula_PP}
\tr\left( Q_{\phi,k} P_k^*P_k Q_{\phi,k}\right)
&=\int_{X}\inner{\xi_{Q_{\phi,k}},\xi_{Q_{\phi,k}}} \Omega\nonumber \\
&=\int_{X}\tr(Q_{\phi,k}^{ 2 }\mu) \Omega -\int_{X}\tr(H_{Q_{\phi,k}}^2)\Omega .
\end{align}
Let $s_{1},\dots ,s_{N}$ be an orthonormal basis with respect to $\frac{Vr}{N}\cse\Hilb(h)$. We have 
\begin{align*} \int_{X}\tr(&Q_{\phi,k}^{ 2 }\mu) \Omega \\
=& \tr(Q_{\phi,k}^{ 2 }\int_{X}\mu  \Omega  )\\
=&\sum_{i,j,l} \int_{X}\inner{s_{i},\phi s_{j}} \Omega \int_{X}\inner{s_{j},\phi s_{l}} \Omega \int_{X}\inner{s_{l},B_{k}^{-1}s_{i}} \Omega\\
=& \Tr\int_{\hspace{-0.05cm}X\hspace{-0.05cm}\times\hspace{-0.05cm} X\hspace{-0.05cm}\times\hspace{-0.05cm} X} { \sum_{i,j,l} \inner{s_{i},\phi s_{j}}(x)\inner{s_{j},\phi s_{l}}(y) s_{l} \otimes s_{i}(z)\hspace{-0.1cm} \circ \hspace{-0.1cm}B_{k}^{-1}(z)\Omega(x)\Omega(y)\Omega(z)} \\ 
=&  \tr \int_{X} K_{\phi, \phi,k}(x)(B_{k}^{-1}(x)) \Omega(x) \cse
\end{align*}
Using Theorem \ref{tian-b} and Proposition \ref{toeplitzexp}, one can compute the first two terms in the asymptotic expansion of this expression. After a short computation, one gets
\begin{align*}
\int_{X}&\tr ( \phi^2) \Omega \cse+k^{-1}\tr \Big(\int_{X}\frac{1}{2\pi} \inner{\bar\partial \phi, \nabla^{1,0} \phi}_{\omega}  \Omega  -\int_{X}\frac{1}{2\pi}\phi \Delta^E \phi\Big) \Omega+O(k^{-2})\\
=& \int_{X}\tr(\phi^2) \Omega \cse+\frac{k^{-1}}{2\pi}\int_{X}\big( \abs{\bar\partial \phi}^2-tr(\phi \Delta^E \phi ) \big) \Omega \cse+O(k^{-2}).\end{align*}
On the other hand,
\begin{align*}H_{Q_{\phi,k}}&=\sum_{i,j} \int_{X} \inner{s_{i}, \phi\cse s_{j}} s_{j}\otimes s_{i}^{*_{\FS(h)}} \Omega \cse=\sum_{i,j} \int_{X} \inner{s_{i}, \phi\cse s_{j}} s_{j}\otimes s_{i}^{*_{B_{k}^{-1}h}}\Omega \\&=\sum_{i,j} \int_{X} \inner{s_{i}, \phi\cse s_{j}} s_{j}\otimes s_{i}^{*_{h}} \circ B_{k}^{-1} \Omega \cse=K_{k,\phi} \circ B_{k}^{-1}.\end{align*} 
Using asymptotic expansions for $B_{k}$ and $K_{\phi,k}$, we have 
\begin{align}H_{Q_{\phi,k}}=&\left(\phi +k^{-1}\Big(\frac{S(\omega)}{2 }\phi + \frac{\sqrt{-1}}{4\pi} (\Lambda F_{h}\phi + \phi \Lambda F_{h}  ) - \frac{1}{4\pi} \Delta^E \phi \Big)+\dots\right)\nonumber \\&\times \left(I-k^{-1}\Big(\frac{S(\omega)}{2 } + \frac{\sqrt{-1}}{2\pi} \Lambda F_{h}\Big)+\dots \right)\nonumber \\=&\phi+k^{-1}\left(  \frac{\sqrt{-1}}{4\pi} (\Lambda F_{h}\phi + \phi \Lambda F_{h}-2 \phi \Lambda F_{h}  ) - \frac{1}{4\pi} \Delta^E \phi  \right)\nonumber \\
&+O(k^{-2}).\label{Hphik}\end{align}
Therefore,

\begin{align*}\tr(H_{Q_{\phi,k}}^2)&=tr(\phi^2)-\frac{k^{-1}}{2\pi}\Tr(\phi \Delta^E \phi )+O(k^{-2}).\end{align*} 
Thus, \begin{align*}\tr\left( Q_{\phi,k} P_k^*P_k Q_{\phi,k}\right)&=\int_{X}\tr(Q_{\phi,k}\mu) \Omega \cse-\int_{X}\tr(H_{Q_{\phi,k}}^2) \Omega \cse\\&=\frac{k^{-1}}{2\pi}\int_X \vert \bar\partial \phi\vert^2 \Omega+O(k^{-2}). \end{align*} On the other hand, the Weitzenb\"ock formula implies that \begin{equation}\Delta_{\bar{\partial}}\phi=\frac{1}{2}\left(\Delta^E\phi+\sqrt{-1}[\Lambda F_{h},\phi]\right).\end{equation}
Hence, \begin{align*} \frac{1}{2\pi}\int_X \vert \bar\partial \phi\vert^2\Omega &=\frac{1}{2\pi}\int_X \Tr(\phi \Delta_{\bar{\partial}}\phi)\Omega\\
&= \frac{1}{4\pi}\int_X \Tr(\phi \Delta^E\phi)\Omega +\frac{\sqrt{-1}}{2\pi}\int_X \Tr(\phi [\Lambda F_{h},\phi])\Omega\\&=\frac{1}{4\pi}\int_X \Tr(\phi \Delta^E\phi)\Omega\end{align*}
Thus, using the Akizuki-Nakano identity relating $\Delta_{\bar\partial}$ to $\Delta_{\partial}$ gives us the leading order coefficient. \\
Now, we explain how compute the second order coefficient in the case $E$ is a line bundle. The only thing we have to do is to go $1$ term further is the asymptotic expansion of the two terms in equation \ref{formula_PP}.\\
Let us start with the first term. We already know that 
$$
	\int_{X}\tr(Q_{\phi,k}^{ 2 }\mu) \Omega 
	= \int_{X} K_{\phi, \phi,k}(x)(B_{k}^{-1}(x)) \Omega(x)
$$
Using Theorem \ref{tian-b} and Taylor expansion, one easily sees that 
$$
	B_{k}^{-1}=k^{-n}\left(1-\frac{S}{2}k^{-1}+\left(\frac{S^2}{4}-A_2\right)k^{-2}+O(k^{-3})\right).
$$
Moreover, using the asymptotic expansion for $K_{\phi,\phi,k}$ given in Proposition \ref{toeplitzexp}, one gets
\begin{align*}
K_{\phi, \phi,k}B_{k}^{-1}=&\left(1-\frac{S}{2}k^{-1}+\left(\frac{S^2}{4}-A_2\right)k^{-2}+O(k^{-3})\right) \\
&\times\Big{\{}
\phi^2+\left(\frac{S}{2}\phi^2-\frac{1}{2\pi} \phi\Delta\phi+\frac{1}{4\pi}\abs{df}^2\right)k^{-1}  \\
&\hspace{4cm} +b_{\phi,\phi,2}k^{-2}+O(k^{-3})
\Big{\}}.
\end{align*}
Since we already computed the $k^0$ and $k^{-1}$-terms above, we only focus on the $k^{-2 }$-term. An easy computation shows that the coefficient of the $k^{-2}$ term is given by
$$
b_{\phi,\phi,2}+\frac{S}{4\pi}\phi\Delta \phi-\frac{S}{8\pi}\abs{d\phi}^2-A_2\phi^2,
$$
and hence the $k^{-2}$-coefficient of $\int_{X}\tr(Q_{\phi,k}^{ 2 }\mu) \Omega $ is
\begin{equation}
\label{equa_1}
\int_X \left(b_{\phi,\phi,2}+\frac{S}{4\pi}\phi\Delta \phi-\frac{S}{8\pi}\abs{d\phi}^2-A_2\phi^2 \right)\Omega.
\end{equation}
Now consider the second term of the right-hand side of equation \ref{formula_PP}. We need the asymptotic expansion of $H_{Q_{k,\phi}}$ up to the $k^{-2}$ term. We have
\begin{align*}
H_{Q_{k,\phi}}
&=K_{k,\phi}B_b^{-1} \\
&=\left(1-\frac{S}{2}k^{-1}+\left(\frac{S^2}{4}-A_2\right)k^{-2}+O(k^{-3})\right) \\
&\hspace{1.5cm}\times \left(\phi+\left(\frac{S}{2}\phi-\frac{\Delta\phi}{4\pi}\right)k^{-1}+b_{\phi,2}k^{-2}+O(k^{-3})\right).
\end{align*}
Expanding this expression, we get
$$
\phi-\frac{\Delta\phi}{4\pi}k^{-1}+\left(b_{\phi,2}+\frac{S\Delta\phi}{8\pi}-A_2\phi\right)k^{-2}+O(k^{-3}).
$$
A easy calculation shows than that the $k^{-2}$-coefficient of $H_{Q_{k,\phi}}^2$ is given by
\begin{align*}
2\phi b_{\phi,2}+\frac{S\phi\Delta\phi}{4\pi}-2A_2\phi^2+\frac{(\Delta\phi)^2}{16\pi^2}
\end{align*}
Hence the $k^{-2}$-coefficient of $\int_X H_{Q_{k,\phi}}^2 \Omega $ is
\begin{equation}
\label{equa_2}
\int_X\left(2\phi b_{\phi,2}+\frac{S\phi\Delta\phi}{4\pi}-2A_2\phi^2+\frac{(\Delta\phi)^2}{16\pi^2}\right)\Omega.
\end{equation}
Putting equations \ref{equa_1} and \ref{equa_2} together we get the $k^{-2}$-coefficient of $\tr(Q_{k,\phi}P_k^*P_k Q_k\phi)$:
$$
\int_X \left(
b_{\phi,\phi,2}-\frac{S\abs{d\phi}^2}{8\pi}-2\phi b_{\phi,2}+A_2\phi^2-\frac{\phi\Delta^2\phi}{16\pi^2}
\right)\Omega.
$$
Now we use the fact that  $\int_X K_{\phi,\psi,k}=\int_X \phi K_{\psi,k}$ which implies that $\int_X b_{\phi,\psi,k}=\int_X \phi b_{\psi,k}$. Using the formula for $b_{\phi,2}$ given is Proposition \ref{toeplitzexp} we get 
$$
\int_X
\left(
-\frac{3\phi\Delta^2\phi}{32\pi^2}+\frac{S}{8\pi}\left(\phi\Delta \phi-\abs{d\phi}^2\right)-\frac{\phi}{8\pi^2}\left(Ric,\ddbar\phi\right)
\right)
\Omega.
$$
We will simplify this by using the following identities which can be proven using Leibniz's rule and integration by parts:
$$
	\int_X \phi(dS,d\phi)\, \Omega = \frac{1}{2}\int_X \phi^2\Delta S\, \Omega = \int_X S(\phi\Delta \phi -|d\phi|^2)\,\Omega.
$$
We get
$$
\int_X
\frac{\phi}{8\pi}\left(
(dS,d\phi)-\frac{3\Delta^2\phi}{4\pi}-\frac{2(Ric,2\ddbar\phi)}{4\pi}
\right)\Omega.
$$
Using the formulas from Lemma \ref{linerarization_scalar_curv} for the Lichnerowicz operator, this can be written as
$$
\frac{1}{16\pi^2}\int_X \left(\phi\D^*\D\phi-2\phi\Delta^2\phi\right)\Omega,
$$
which concludes the proof.
\end{proof}
By symmetry, we get the following corollary.
\begin{cor}
 Let $h$ be a metric on $E$ and $\phi,\psi \in C^{\infty}(X,End(E))$ Hermitian with respect to $h$. 
Then we have the asymptotics 
 $$\tr\left( Q_{\phi,k} P_k^*P_k Q_{\psi,k}\right)=\frac{1}{8\pi k}  \int_{X}\tr\big( \phi\Delta^E \psi + \psi \Delta^E \phi \big)\Omega+O(k^{-2}).$$  
 Moreover this estimate is uniform in the endomorphisms $\phi,\psi$ if $\phi,\psi$ vary in a subset of $C^{\infty}(X,End(E))$ which is bounded for the $C^{\infty}$-topology. The estimate is uniform when the metric $h$ on $E$ varies in a set of uniformly equivalent metrics lying in a compact set for the $C^{\infty}$-topology. 
\end{cor}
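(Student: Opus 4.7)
The plan is to derive the corollary from Theorem \ref{thm1} by a standard polarization argument. The key observation is that the expression
$$B_k(\phi,\psi) := \tr\bigl(Q_{\phi,k}\, P_k^*P_k\, Q_{\psi,k}\bigr)$$
is a symmetric bilinear form in $(\phi,\psi)$. Symmetry follows at once: the assignment $\phi\mapsto Q_{\phi,k}$ is linear by its definition as a derivative at $t=0$; the operator $P_k^*P_k$ is self-adjoint with respect to the Killing form $\langle A,B\rangle=\tr(AB)$ on $\sqrt{-1}\,\mathfrak{u}(N_k)$; and the trace is cyclic. Together these give $\tr(Q_{\phi,k}P_k^*P_k Q_{\psi,k})=\tr(Q_{\psi,k}P_k^*P_k Q_{\phi,k})$.

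Next, I would apply the polarization identity
$$B_k(\phi,\psi)=\tfrac{1}{2}\bigl[B_k(\phi+\psi,\phi+\psi)-B_k(\phi,\phi)-B_k(\psi,\psi)\bigr].$$
Each of the three terms on the right is covered by Theorem \ref{thm1}. Using the leading-order expansion there, the $k^{-1}$-coefficient of $B_k(\phi+\psi,\phi+\psi)$ equals $\tfrac{1}{4\pi}\int_X \tr\bigl((\phi+\psi)\Delta^E(\phi+\psi)\bigr)\,\Omega$. Expanding the product and cancelling the $\phi\Delta^E\phi$ and $\psi\Delta^E\psi$ contributions against the subtracted quadratic terms, the cross terms $\phi\Delta^E\psi+\psi\Delta^E\phi$ survive. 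Dividing by $2$ yields exactly the claimed leading asymptotic $\tfrac{1}{8\pi k}\int_X \tr(\phi\Delta^E\psi+\psi\Delta^E\phi)\,\Omega$. The error term is $O(k^{-2})$ since each of the three quadratic expansions has error $O(k^{-2})$.

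For the uniformity statements, the crucial point is that if $\phi,\psi$ lie in a $C^\infty$-bounded subset of $C^\infty(X,\End(E)_H)$, then so does $\phi+\psi$; hence the uniform version of Theorem \ref{thm1} applies simultaneously to all three terms on the right-hand side of the polarization identity, yielding the claimed uniformity in $\phi,\psi$. Similarly, the metric $h$ may be allowed to vary in a compact family of uniformly equivalent $C^\infty$ metrics without spoiling the argument.

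There is essentially no obstacle in this corollary: polarization reduces everything to an already-proved quadratic estimate, and the only thing to check carefully is the symmetry of $B_k$ in $\phi,\psi$, which is immediate from self-adjointness of $P_k^*P_k$ together with cyclicity of trace. No new analytic input is required beyond Theorem \ref{thm1}.
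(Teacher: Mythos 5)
Your proposal is correct and follows essentially the same route as the paper, which deduces the corollary from Theorem \ref{thm1} precisely ``by symmetry'', i.e.\ by bilinearity and symmetry of $(\phi,\psi)\mapsto\tr\left(Q_{\phi,k}P_k^*P_kQ_{\psi,k}\right)$ (the paper's identity $\tr(AP_k^*P_k B)=\langle P_k A,P_k B\rangle_{L^2}$ gives the symmetry you establish via self-adjointness and cyclicity). The polarization computation and the uniformity argument for $\phi+\psi$ are exactly what is needed, so no gap remains.
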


\begin{rem}
 In all the section we have set $\omega=c_1(\sigma)$ and have assumed that $\omega^n/n!=\Omega$ pointwisely. Nevertheless, it is possible to define the operators $\Hilb$, $Q_{k,.}$ and $P_k$ with respect to a volume form $\Omega$ such that $\Omega\neq \omega^n/n!$. In that case, Theorem \ref{thm1} still holds.
\end{rem}

\section{Eigenvalues and Eigenspaces} \label{Sect2}

For $j\geq 0$, let $\lambda_j$ be the eigenvalues of the Bochner Laplacian $\Delta=\Delta^E$ acting on the space of smooth sections of the bundle $End(E)_H$ of Hermitian endomorphisms of $E$. We use the convention that $0\leq \lambda_{j}\leq \lambda_{j+1}.$ If we set $E_r$ the space generated by the eigenspaces $$\{v \in C^{\infty}(End(E)_H)|(\Delta^E-\lambda_j Id)v=0\}$$ for $0\leq j\leq r$, then $$\lambda_{r+1}=\min_{\phi \in E_r^\perp} \frac{\Vert \nabla \phi\Vert^2_{L^2} }{\Vert \phi\Vert^2_{L^2}}.$$ Note that $\dim E_{r} \geq r+1$ and the equality holds if and only if $\lambda_{r+1} > \lambda_r$. Let $\nu_{0,k}\leq ...\leq \nu_{M_{k}, k}$ the eigenvalues of the operator $ P_k^*P_k,$ where $M_{k}+1=\dim \Lie{u}(N_{k}).$  Define $F_{r,k}$ to be the space generated by the eigenspaces $$\{A \in \sqrt{-1}\Lie{u}(N_{k})|(P_k^*P_k-\nu_{j,k}Id)A=0\}$$ for $ 0\leq j \leq r$. Then 
$$\nu_{r+1,k}=\min_{B\in F_{r,k}^{\perp}}\frac{\Vert P_k B \Vert^2}{\Vert B\Vert^ 2}.$$
Note that $\dim F_{r,k} \geq r+1$ and the equality holds off $\nu_{r+1,k} >\nu_{r,k}$.
We write write $F_{p,q,k}\subset \sqrt{-1} \Lie{u}(N_{k})$ for the span of $\nu_{j,k}$-eigenspaces of $P_k^*P_k$ with $p\leq j \leq q$. 
{\bf From now we will assume that $E$ is simple.}
\begin{thm}\label{thm3} Under the setting as above and assuming that $E$ is a simple vector bundle, 
 for each $j\geq 0$, one has 
$$\nu_{j,k}=\frac{\lambda_j}{4 \pi k^{n+1}}+O(k^{-n-2}),$$
when $k\rightarrow +\infty$.
\end{thm}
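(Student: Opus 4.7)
The plan follows J.~Fine's strategy \cite{Fi3} for matching spectra via the Courant--Fischer min-max principle, using two transfer maps: $\phi\mapsto Q_{\phi,k}$ (for the upper bound on $\nu_{j,k}$) and the Fubini--Study symbol map $A\mapsto H_A$ of Definition~\ref{H_A} (for the lower bound). These are asymptotic inverses of each other with controlled error, and both comparisons reduce to asymptotic identities already isolated in Section~\ref{Sect1}.

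For the upper bound, let $\phi_0,\ldots,\phi_j$ be an $L^2$-orthonormal basis of the sum of the first $j+1$ eigenspaces of $\Delta^E$ and set $V_k=\mathrm{span}\{Q_{\phi_0,k},\ldots,Q_{\phi_j,k}\}\subset\sqrt{-1}\Lie{u}(N_k)$. Theorem~\ref{thm1} and its corollary give $\tr(Q_{\phi_a,k} P_k^*P_k Q_{\phi_b,k}) = \frac{\lambda_a+\lambda_b}{8\pi k}\delta_{ab}+O(k^{-2})$, while a parallel application of Proposition~\ref{toeplitzexp} to $\tr(Q_{\phi_a,k}Q_{\phi_b,k}) = \int_X \tr(K_{\phi_a,\phi_b,k})\,\Omega$ yields $\tr(Q_{\phi_a,k} Q_{\phi_b,k}) = k^n\delta_{ab}+O(k^{n-1})$. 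Hence $V_k$ is $(j+1)$-dimensional for $k$ large and the largest Rayleigh quotient on $V_k$ is $\lambda_j/(4\pi k^{n+1})+O(k^{-n-2})$, so the min-max principle gives $\nu_{j,k}\le \lambda_j/(4\pi k^{n+1})+O(k^{-n-2})$.

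For the lower bound, I argue by contradiction. Suppose $\nu_{j,k}\le (\lambda_j-\eps)/(4\pi k^{n+1})$ along a subsequence for some $\eps>0$, and let $F_{j,k}$ be the sum of eigenspaces of $P_k^*P_k$ for eigenvalues $\le \nu_{j,k}$, so that $\dim F_{j,k}\ge j+1$. To each $A\in F_{j,k}$ associate the symbol $H_A\in C^\infty(X,End(E)_H)$. Using Lemma~\ref{lemm0} with the expansion $\bar\mu=\frac{rV}{N_k}I+O(k^{-n-1})$ coming from Theorem~\ref{tian-b}, together with an approximate inversion $A = Q_{H_A,k}+O(k^{-1})$ (established for low-eigenvalue matrices via the Bergman kernel asymptotics of Proposition~\ref{toeplitzexp}), one obtains, for any unit $A\in F_{j,k}$ and $k$ large, the estimate $\int_X\tr(H_A\Delta^E H_A)\,\Omega \le (\lambda_j-\eps/2)\int_X\tr(H_A^2)\,\Omega$. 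Uniform $C^\infty$ bounds on the symbols (again via Proposition~\ref{toeplitzexp}) let me extract a $C^\infty$-convergent subsequence from an orthonormal basis of $F_{j,k}$, producing $j+1$ linearly independent sections of $End(E)_H$ whose Rayleigh quotients for $\Delta^E$ are $\le \lambda_j-\eps/2$, contradicting the min-max characterization of $\lambda_j$.

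The hard step is the lower bound: I must prove the approximate inversion $A\approx Q_{H_A,k}$ on the low-eigenvalue subspace $F_{j,k}$, and then establish uniform smoothness and near-isometry of the symbol map on these subspaces, whose dimensions grow with $k$. The simplicity of $E$ is essential here: it ensures that $\ker(\Delta^E|_{End(E)_H})=\R\cdot \Id_E$ is one-dimensional, matched on the matrix side by $\ker(P_k^*P_k)=\R\cdot I_{N_k}$ (since the only continuous infinitesimal symmetries of the Kodaira embedding of a simple bundle are scalars), so the spectra align starting from the simple eigenvalue $\lambda_0=\nu_0=0$ and the enumeration-by-index comparison is consistent for every $j\geq 0$.
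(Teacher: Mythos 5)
Your upper bound is fine and is essentially the paper's: testing $P_k^*P_k$ on the span of the matrices $Q_{\phi_a,k}$ and feeding Theorem \ref{thm1} and the Gram-matrix estimate $\Tr(Q_{\phi_a,k}Q_{\phi_b,k})=k^n\delta_{ab}+O(k^{n-1})$ into min-max gives $\nu_{j,k}\le \lambda_j/(4\pi k^{n+1})+O(k^{-n-2})$. The genuine gap is in the lower bound, precisely at the step you flag as "the hard step". Your argument needs, for an arbitrary low eigenvector $A$ of $P_k^*P_k$, the approximate inversion $A=Q_{H_A,k}+O(k^{-1})$ and "uniform $C^\infty$ bounds on the symbols $H_A$", both attributed to Proposition \ref{toeplitzexp}. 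But the Bergman/Toeplitz expansions of Proposition \ref{toeplitzexp} and Theorem \ref{thm1} are uniform only when the endomorphism runs in a \emph{fixed} $C^\infty$-bounded set; the symbols $H_A$ of eigenvectors of $P_k^*P_k$ come with no a priori $k$-independent regularity (they can oscillate at scale $k^{-1/2}$), so you cannot invoke those expansions to control $\int_X\tr(H_A\Delta^E H_A)\,\Omega$ or to justify $A\approx Q_{H_A,k}$. The argument is in effect circular: the uniform smoothness of the symbols of low eigenvectors is essentially equivalent to what you are trying to prove. (The near-isometry $\Tr(AB)\approx k^n\langle H_A,H_B\rangle_{L^2}$ on the low eigenspaces is the easy part, following from Lemma \ref{lemm0} and $\bar\mu=k^{-n}\mathrm{Id}+O(k^{-n-1})$; it is the Dirichlet-energy bound that does not.)

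The paper closes exactly this gap by a geometric estimate valid for \emph{every} matrix $A$, with no regularity hypothesis: interpreting $H_A$ and $P_k(A)$ as the blocks of the constant section $A$ of $\End(\underline{\C}^{N_k})$ relative to $E(k)=\iota_k^*U_r$, one has the exact identity $\bar\partial^{\End(E(k))}H_A=-F_k^*P_k(A)$, where $F_k$ is the second fundamental form; the curvature computation $F_kF_k^*=2\pi k\,T_k+O(1)$ in fibrewise operator norm then yields the key estimate of Proposition \ref{claim}, $\norm{\nabla H_A}_{L^2}^2\le(4\pi k+O(1))\norm{P_k(A)}^2$. This replaces your approximate-inversion step. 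Moreover, the paper does not run a one-shot contradiction for each $j$: it proves Theorems \ref{thm3} and \ref{thm4} simultaneously by induction on eigenvalue clusters, because the lower bound at the next cluster uses the approximate eigenvectors $A_{j,k}$ and the near-isometry supplied by the previous levels (to knock out the components of $H_B$ along $E_r$ via Lemma \ref{5}) before applying the variational characterization of $\lambda_{r+1}$. Your compactness/contradiction shortcut would additionally require extracting limits of the symbols with enough regularity to keep $j+1$ independent competitors, which again is not available without the estimates above. Simplicity of $E$ is used as you say, but only to start the induction ($\Ker\Delta^E=\R\,\Id_E$, with $\Id_{N_k}$ spanning the kernel on the matrix side); the unproved parenthetical claim about $\ker(P_k^*P_k)$ should be justified rather than asserted.
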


\begin{thm}\label{thm4} Under the setting as above assume that $E$ is a simple vector bundle.
 Fix an integer $r>0$. There is a constant $C>0$ such that for all $A,B\in F_{r,k}$,
$$\Big\vert \Tr(AB)-k^{n} \langle H_A,H_B\rangle_{L^2} \Big\vert \leq Ck^{-1}\Tr(A^2)^{1/2}\Tr(B^2)^{1/2}.$$
Moreover, let us fix integers $0<p<q$ such that $$\lambda_{p-1}<\lambda_p = \lambda_{p+1} = ... = \lambda_q <\lambda_{q+1}.$$  Given an eigenvector $\phi \in \Ker(\Delta^E- \lambda_p Id)$, let $A_{\phi,k}$ denote the point $F_{p,q,k}$ with $H_{A_{\phi,k}}$ nearest to $\phi$ as measured in $L^2$. Then 
$$\Vert H_{A_{\phi,k}}-\phi\Vert^2_{L^2}= O(k^{-1}),$$
and this estimate is uniform in $\phi$ if we require that $\Vert \phi\Vert_{L^2}=1$.
\end{thm}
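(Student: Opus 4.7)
The plan is to follow the strategy of J.~Fine from [Fi3] for the Lichnerowicz operator, adapting it to the vector bundle setting established in the previous sections. For the first estimate, I would invoke Lemma~\ref{lemm0}(2) together with the identification $d\bar\mu=P_k^*P_k$ from the Corollary after it, to rewrite
\[
k^n\langle H_A,H_B\rangle_{L^2}=k^n\tr(AB\bar\mu)-k^n\tr(A\cdot P_k^*P_k(B)).
\]
The matrix $\bar\mu$ can be expressed as $\bar\mu_{ij}=\int_X\langle B_k^{-1}s_i,s_j\rangle_{h\otimes\sigma^k}\,\Omega$, using that $\iota^*h_{FS}=(h\otimes\sigma^k)\circ B_k^{-1}$. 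The Bergman kernel expansion of Theorem~\ref{tian-b} then yields $\bar\mu=k^{-n}(\Id+\Theta_k)$ with $\|\Theta_k\|_{op}=O(k^{-1})$, giving $|\tr(AB)-k^n\tr(AB\bar\mu)|\leq Ck^{-1}\|A\|_{HS}\|B\|_{HS}$. For $A,B\in F_{r,k}$, the eigenvalues of $P_k^*P_k$ appearing in the decomposition of $A,B$ are at most $\nu_{r,k}=O(k^{-n-1})$ by Theorem~\ref{thm3}, so $k^n|\tr(A\cdot P_k^*P_k(B))|\leq k^n\nu_{r,k}\|A\|_{HS}\|B\|_{HS}=O(k^{-1})\|A\|_{HS}\|B\|_{HS}$, which completes the first estimate.

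For the eigenspace estimate, $Q_{\phi,k}$ plays the role of an approximate eigenvector of $P_k^*P_k$ with approximate eigenvalue $\eta_p:=\lambda_p/(4\pi k^{n+1})$. Indeed, for $\phi$ with $\Delta^E\phi=\lambda_p\phi$ and $\|\phi\|_{L^2}=1$, Theorem~\ref{thm1} yields $\tr(Q_{\phi,k}P_k^*P_kQ_{\phi,k})=\lambda_p/(4\pi k)+O(k^{-2})$, while the Toeplitz expansion of Proposition~\ref{toeplitzexp} gives $\tr(Q_{\phi,k}^2)=k^n+O(k^{n-1})$; thus the Rayleigh quotient is $\eta_p+O(k^{-n-2})$. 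Equation~(\ref{Hphik}) furthermore shows $H_{Q_{\phi,k}}=\phi+O(k^{-1})$ in $C^\infty$, so $\|H_{Q_{\phi,k}}-\phi\|_{L^2}=O(k^{-1})$. Decompose $Q_{\phi,k}=A'+R$ in the $P_k^*P_k$-spectral decomposition with respect to the trace inner product, with $A'\in F_{p,q,k}$ and $R\perp F_{p,q,k}$. The polarized corollary after Theorem~\ref{thm1}, combined with the first estimate and the identity $\tr(Q_{\psi,k}Q_{\phi,k})=k^n\langle\psi,\phi\rangle_{L^2}+O(k^{n-1})$, shows $\tr(Q_{\psi,k}(P_k^*P_kQ_{\phi,k}-\eta_pQ_{\phi,k}))=O(k^{-2})$ for every $\Delta^E$-eigenvector $\psi$. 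Using the spectral gap $|\nu_{j,k}-\eta_p|\geq c/k^{n+1}$ for $j\notin\{p,\dots,q\}$ (from Theorem~\ref{thm3} and the assumption $\lambda_{p-1}<\lambda_p=\cdots=\lambda_q<\lambda_{q+1}$), a Davis--Kahan sin-theta type argument produces $\|R\|_{HS}^2=O(k^{n-1})$.

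To conclude, the general upper bound $\|H_A\|_{L^2}^2\leq\|\bar\mu\|_{op}\tr(A^2)=O(k^{-n})\tr(A^2)$, which follows from Lemma~\ref{lemm0}(2) with $B=A$ and $\tr(A\cdot P_k^*P_k(A))\geq 0$, gives $\|H_R\|_{L^2}^2=O(k^{-1})$. Hence
\[
\|H_{A'}-\phi\|_{L^2}\leq\|H_R\|_{L^2}+\|H_{Q_{\phi,k}}-\phi\|_{L^2}=O(k^{-1/2}),
\]
and since $A_{\phi,k}\in F_{p,q,k}$ is the minimizer of $\|H_{\cdot}-\phi\|_{L^2}$,
\[
\|H_{A_{\phi,k}}-\phi\|_{L^2}^2\leq\|H_{A'}-\phi\|_{L^2}^2=O(k^{-1}).
\]
Uniformity in $\phi$ with $\|\phi\|_{L^2}=1$ follows from the uniformity clauses in Theorem~\ref{thm1} and its Corollary. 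The main obstacle is the quantitative sin-theta step in the preceding paragraph: one must control the projection of $Q_{\phi,k}$ onto the high-eigenvalue complement of $F_{p,q,k}$ using only the test vectors $Q_{\psi,k}$ obtained from $\Delta^E$-eigenvectors, whereas the matrix space $\sqrt{-1}\mathfrak{u}(N_k)$ has dimension $\Theta(k^{2n})$; combining the approximate eigenvector equation with the $k$-dependent spectral gap to extract the $\|R\|_{HS}^2=O(k^{n-1})$ bound is the technical heart of the argument.
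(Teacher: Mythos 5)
Your first estimate is essentially the paper's Proposition~\ref{prop1}: Lemma~\ref{lemm0}(2), the operator-norm bound $\norm{\bar\mu-k^{-n}\mathrm{Id}}_{op}=O(k^{-n-1})$ of Lemmas~\ref{0}--\ref{1}, and the bound $\nu_{s,k}=O(k^{-n-1})$ on $F_{r,k}$, so that part is fine in substance. The genuine gap is exactly where you locate it: the bound $\Tr(R^2)=O(k^{n-1})$ for the component of $Q_{\phi,k}$ orthogonal to $F_{p,q,k}$. The information you actually have is only the Rayleigh quotient of $Q_{\phi,k}$ (Theorem~\ref{thm1}) plus pairings against the finitely many test matrices $Q_{\psi,k}$; this does not give a bound on $\norm{P_k^*P_kQ_{\phi,k}-\eta_pQ_{\phi,k}}$, which is what a Davis--Kahan argument needs. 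Concretely, writing $Q_{\phi,k}=\sum_j c_jA_j$ in a $P_k^*P_k$-eigenbasis, the identity $\sum_j\nu_{j,k}c_j^2=\eta_p\Tr(Q_{\phi,k}^2)+O(k^{-2})$ cannot rule out a large amount of mass on eigenvalues above $\eta_q$ compensated by mass on the (many) eigenvalues below $\eta_p$; since both the gap and $\eta_p$ are of size $k^{-n-1}$, this argument only yields $\Tr(R^2)\leq c\,\Tr(Q_{\phi,k}^2)$ with a constant $c<1$, not $O(k^{-1})\Tr(Q_{\phi,k}^2)$. To control the low part you would need to know that the low-lying eigenvectors of $P_k^*P_k$ pair weakly with $Q_{\phi,k}$, i.e.\ that their $H$-images are close to the low $\Delta^E$-eigenvectors --- which is precisely the content of the theorem at lower levels, so invoking it here is circular. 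The same circularity affects your free use of Theorem~\ref{thm3}: in the paper, Theorems~\ref{thm3} and~\ref{thm4} are proved simultaneously by the induction (I), and the lower eigenvalue bound (Proposition~\ref{upperboundprop}) itself consumes the eigenvector statements at lower levels via Lemmas~\ref{3} and~\ref{5}; there is no independent proof of Theorem~\ref{thm3} you could quote.

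The paper avoids the missing sin-theta step altogether by arguing on the eigenvectors rather than on $Q_{\phi,k}$. The analytic heart is the second-fundamental-form estimate of Proposition~\ref{claim}, $\norm{\nabla H_A}_{L^2}^2\leq(4\pi k+O(1))\norm{P_k(A)}^2$, which converts the eigenvalue information $\nu_{j,k}$ for $A\in F_{r+1,s,k}$ into control of $\norm{\nabla H_A}_{L^2}$. Splitting $H_A=H_A^<+H_A^{r+1}+H_A^>$, Lemma~\ref{5} (with the inductively constructed $A_{j,k}$) kills $H_A^<$, the variational characterization of $\lambda_{s+1}$ kills $H_A^>$, and Lemma~\ref{6} results; then $A\mapsto H_A^{r+1}$ is shown to be an isomorphism onto the $\lambda_{r+1}$-eigenspace, which produces $A_{\phi,k}$ with $\norm{H_{A_{\phi,k}}-\phi}_{L^2}^2=O(k^{-1})$ (Proposition~\ref{step3}). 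So to complete your proposal you would either have to prove a genuine approximate-eigenvector bound for $Q_{\phi,k}$ together with control of its projections onto the low spectrum (effectively re-doing the induction), or adopt the paper's route through Proposition~\ref{claim}; as written, the step you flag as the ``technical heart'' is not an obstacle you have reduced, but the entire content of the theorem.
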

In both theorems, the estimates are uniform when the metric varies in a family of uniformly equivalent metrics
which is compact for the smooth topology.

\smallskip

\smallskip
\begin{center}
\textbf{Induction hypotheses (I).}

\rule{5cm}{0.05cm}
\end{center}
\smallskip

Let $r$ be a non-negative integer. We call the following statement the $r^{\text{th}}$  inductive hypotheses.
\begin{enumerate}
 \item For each $j=0,...,r$, 
$$\nu_{j,k}=\frac{\lambda_j}{ 4 \pi k^{n+1}}+O(k^{-n-2})$$
\item There exists a constant $C>0$ such that for all $A,B\in F_{r,k}$, 
$$\Big\vert \Tr(AB)-k^n \langle H_A,H_B\rangle_{L^2} \Big\vert \leq ck^{-1}\Tr(A^2)^{1/2}\Tr(B^2)^{1/2}$$
\item Fix integers $0<p<q\leq r$, such that  $\lambda_{p-1}<\lambda_p = \lambda_{p+1} = ... = \lambda_q <\lambda_{q+1}$. Given $\phi \in \Ker(\Delta^E- \lambda_p Id)$, let $A_{\phi,k}$ denote the point in $F_{p,q,k}$ with $H_{A_{\phi,k}}$ nearest to $\phi$ as measured in $L^2$. Then 
$$\Vert H_{A_{\phi,k}}-\phi\Vert^2_{L^2}= O(k^{-1})$$
and this estimate is uniform in $\phi$ if we require that $\Vert \phi\Vert_{L^2}=1$. 
\end{enumerate}
Let us assume that the eigenvalues satisfy $\lambda_r<\lambda_{r+1}=... = \lambda_s<\lambda_{s+1}$. To carry out the induction we will prove that the $r^{\text{th}}$ inductive hypotheses imply the $s^{\text{th}}$ inductive hypotheses. This will prove simultaneously both Theorems \ref{thm3} and \ref{thm4} using the induction scheme (I).
\begin{center}
\rule{5cm}{0.05cm}
\end{center}
\smallskip
\subsection{Level 0 of the induction (I)}
The first eigenvalue of $\Delta^E$ is $\lambda_0=0$ and $\Ker(\Delta)=\mathbb{R}Id_{E}$ since $E$ is simple. Therefore, $\lambda_1>\lambda_0$.
Since the first eigenvalue $\nu_{0,k}$ of $P_k^* P_k$ is also $0$, step 1  of the induction obviously holds. 
Furthermore, $Id=Id_{N}$ spans the $\nu_{0,k}$-eigenspace, so the step 3 holds immediately too. With $\tr(Id^2)=N_{k}=rVk^n+O(k^{n-1})$ and $H_{Id}=Id_{E}$, one gets easily Step 2 since $\int_X \inner{H_{Id},H_{Id}}\Omega =rV.$ Hence, the induction process is valid at the base level.

\subsection{Upper bound on the eigenvalues}
We start by giving an asymptotic upper bound of the eigenvalues of the operator $P_k^*P_k$. 
\begin{definition}
Define the projection $$\pi_{k}: L^2(X,End(E)) \to F_{r,k},$$ by $\pi_{k}(\phi)$ is the $L^2$-orthogonal projection of $Q_{\phi,k}$ onto $F_{r,k}.$

\end{definition}

\begin{lem}\label{easybound}
 Assume that $\lambda_r<\lambda_{r+1}= ... = \lambda_s <\lambda_{s+1}$ and that the inductive hypothesis at level $r$ holds. Then for all $j=r+1, ... ,s$ one has 
$$\nu_{r+1,k}\leq \frac{\lambda_{r+1}}{ 4\pi  k^{n+1}}+O(k^{-n-2})$$
\end{lem}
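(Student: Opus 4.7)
The plan is to use the Rayleigh-quotient characterization
$$\nu_{r+1,k}=\min_{B\in F_{r,k}^{\perp}}\frac{\|P_k B\|^2}{\|B\|^2}$$
and construct a good test element from an eigenvector of $\Delta^E$ with eigenvalue $\lambda_{r+1}$. Concretely, I would take $\phi\in C^{\infty}(X,End(E)_H)$ an $L^2$-normalized $\lambda_{r+1}$-eigenvector, form $Q_{\phi,k}\in\sqrt{-1}\mathfrak{u}(N_k)$, and set $B_k:=Q_{\phi,k}-\pi_k(\phi)\in F_{r,k}^{\perp}$.

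For the numerator, Theorem \ref{thm1} gives
$$\tr(Q_{\phi,k}P_k^*P_k Q_{\phi,k})=\frac{1}{4\pi k}\int_X\tr(\phi\Delta^E\phi)\,\Omega+O(k^{-2})=\frac{\lambda_{r+1}}{4\pi k}+O(k^{-2}).$$
The contribution $\tr(\pi_k(\phi)P_k^*P_k\pi_k(\phi))$ is controlled by the inductive bound $\nu_{r,k}\leq\lambda_r/(4\pi k^{n+1})+O(k^{-n-2})$ combined with the size estimate on $\pi_k(\phi)$ established below; cross-terms are treated by Cauchy--Schwarz and are of smaller order.

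The crux is the denominator. I expect $\tr(Q_{\phi,k}^2)=k^n+O(k^{n-1})$, which follows from integrating Lemma \ref{lemm0}, namely $\langle H_A,H_B\rangle_{L^2}+\tr(AP_k^*P_kB)=\tr(AB\bar{\mu})$, specialized to $A=B=Q_{\phi,k}$, together with the asymptotics of $\bar{\mu}$ and the fact that $H_{Q_{\phi,k}}\to\phi$ in $L^2$ (equation \eqref{Hphik} in the proof of Theorem \ref{thm1}). The more delicate point is to show $\tr(\pi_k(\phi)^2)=o(k^n)$. Here the inductive hypothesis is essential: by inductive hypothesis (3), elements of $F_{r,k}$ correspond via $A\mapsto H_A$ to $L^2$-approximations of eigenfunctions of $\Delta^E$ with eigenvalues $\leq\lambda_r$, i.e.\ to functions in $E_r$. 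Since $\phi\perp_{L^2}E_r$ and $H_{Q_{\phi,k}}\approx\phi$, the $L^2$-inner products $\langle H_{Q_{\phi,k}},H_A\rangle_{L^2}$ are small for $A\in F_{r,k}$; inductive hypothesis (2) then converts this $L^2$-near-orthogonality into trace-near-orthogonality, which is exactly what bounds $\tr(\pi_k(\phi)^2)$.

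The main obstacle is keeping track of the decay rates precisely enough to land at the target error $O(k^{-n-2})$ rather than something coarser. Once $\tr(B_k^2)=k^n+O(k^{n-1})$ and $\|P_kB_k\|^2=\lambda_{r+1}/(4\pi k)+O(k^{-2})$ are in hand, the Rayleigh quotient yields
$$\nu_{r+1,k}\leq\frac{\|P_kB_k\|^2}{\|B_k\|^2}=\frac{\lambda_{r+1}}{4\pi k^{n+1}}+O(k^{-n-2}),$$
as required. The uniformity in $h$ is inherited from the corresponding uniformity clauses in Theorem \ref{thm1} and Proposition \ref{toeplitzexp}.
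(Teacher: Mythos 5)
Your overall strategy (Rayleigh quotient for $\nu_{r+1,k}$, test element built from $Q_{\phi,k}$, Theorem \ref{thm1} for the numerator, $\Tr(Q_{\phi,k}^2)=k^n+O(k^{n-1})$ for the denominator) is the right one, and incidentally your numerator is even easier than you suggest: since $F_{r,k}$ is $P_k^*P_k$-invariant, the cross-terms vanish and $\norm{P_k B_k}^2\leq \Tr(Q_{\phi,k}P_k^*P_kQ_{\phi,k})$. The genuine gap is in the denominator, at the step ``$\Tr(\pi_k(\phi)^2)=o(k^n)$''. You justify it by asserting that, via inductive hypothesis (3), \emph{every} element of $F_{r,k}$ has $H$-image $L^2$-close to $E_r$, hence nearly orthogonal to $\phi$. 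But hypothesis (3) only goes one way: it produces, for each eigenfunction in $E_r$, some $A_{\psi,k}\in F_{r,k}$ with $H_{A_{\psi,k}}$ close to $\psi$; it says nothing about arbitrary elements of $F_{r,k}$. The two statements coincide only when $\dim F_{r,k}=r+1$, i.e.\ when $\nu_{r+1,k}>\nu_{r,k}$, since only then do the $r+1$ vectors $A_{j,k}$ (which are linearly independent, cf.\ Lemma \ref{3}) span all of $F_{r,k}$. If instead $\nu_{r+1,k}=\nu_{r,k}$, then $\dim F_{r,k}>r+1$, the extra directions of $F_{r,k}$ are not controlled by any inductive hypothesis, and a priori $\pi_k(\phi)$ could swallow most of $Q_{\phi,k}$, leaving you with a useless Rayleigh quotient. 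Note also that Lemma \ref{5}, the natural tool for ``$\langle H_A,\phi\rangle$ is small'', requires eigenvectors $M_k$ with $H_{M_k}\to\phi$ for the $\lambda_{r+1}$-eigenvector $\phi$ — which is precisely what is being proved at this level, so it cannot be invoked here.

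The fix is the dichotomy used in the paper, and it makes the whole argument much shorter. If $\nu_{r+1,k}=\nu_{r,k}$ (equivalently $\dim F_{r,k}>r+1$), the bound is immediate from inductive hypothesis (1) because $\lambda_r<\lambda_{r+1}$; no test element is needed. If $\nu_{r+1,k}>\nu_{r,k}$, then $\dim F_{r,k}=r+1<\dim E_{r+1}$, so by a pure dimension count the map $\pi_k|_{E_{r+1}}$ has nontrivial kernel, and one picks a unit-norm $\phi\in E_{r+1}$ with $\pi_k(\phi)=0$: then $Q_{\phi,k}$ is \emph{exactly} orthogonal to $F_{r,k}$ and there is no projection term to estimate at all (the price is that $\phi$ need not be a pure $\lambda_{r+1}$-eigenvector, but $\int\Tr(\phi\Delta^E\phi)\,\Omega\leq\lambda_{r+1}\norm{\phi}_{L^2}^2$ still holds since $\phi\in E_{r+1}$). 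With the dichotomy added, your route can also be completed in the case $\dim F_{r,k}=r+1$ by expanding $\pi_k(\phi)$ in the $A_{j,k}$ and combining Lemmas \ref{lemm0}, \ref{1}, \ref{2} with $H_{Q_{\phi,k}}=\phi+O(k^{-1})$ to get $\Tr(\pi_k(\phi)^2)=O(k^{n-1})$, but this is strictly more work than the paper's choice of $\phi$ in $\Ker(\pi_k|_{E_{r+1}})$; as written, without the dichotomy, the proof does not go through.
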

\begin{proof}
Define $I=\{k| \dim F_{r,k}=r+1\}.$ Hence, $k \in I$ if and only if  $\nu_{r,k} < \nu_{r+1,k}$. Fix $k \in I$, we have $\Ker(\pi_{k}|_{E_{r+1}}) \neq \emptyset,$ since $\dim F_{r,k} <\dim E_{r+1}.$ Let $\phi \in \Ker(\pi_{k})|_{E_{r+1}}$ such that $\norm{\phi}_{L^2}=1.$ Therefore, 
$$\nu_{r+1,k} \leq \frac{\norm{P_{k}Q_{\phi,k}}^2}{\Tr(Q_{\phi,k}^2)}=\frac{\Tr(Q_{\phi,k}P_{k}^*P_{k}Q_{\phi,k})}{\Tr(Q_{\phi,k}^2)},$$ since $Q_{\phi,k} \bot F_{r,k}.$ On the other hand, 
$$\Tr(Q_{\phi,k}P_{k}^*P_{k}Q_{\phi,k})=\frac{1}{ 4 \pi k}\int \Tr(\phi\Delta^E \phi )+O(k^{-2})\leq\frac{\lambda_{r+1}}{ 4 \pi k} \norm{\phi}_{L^2}^2+O(k^{-2}),$$
$$\Tr(Q_{\phi,k}^2)=k^n\int \Tr(\phi^2)+O(k^{n-1})=k^n+O(k^{n-1}).$$ Therefore, for any $k \in I,$ we have 
$$\nu_{r+1,k} \leq \frac{\lambda_{r+1}}{ 4 \pi k^{n+1}}+O(k^{-n-1}). $$ On the other hand, the induction hypotheses implies that for $k \not \in I,$
$$\nu_{r+1,k} = \nu_{r,k} \leq \frac{\lambda_{r}}{ 4 \pi k^{n+1}}+O(k^{-n-1})\leq\frac{\lambda_{r+1}}{ 4 \pi k^{n+1}}+O(k^{-n-1}).$$

\end{proof}


\subsection{Some estimates}

We need the following facts, see \cite[Lemmas 20 \&  21]{Fi3}, see also \cite{Fi1}.

\begin{lem}\label{0}
Let $h$ be a Hermitian metric on $E.$
Then $$\norm{\bar{\mu}(\Hilb(h))-\frac{rV}{N_{k}}Id}_{op}=O(k^{-n-1}) \,\, \textrm{as } \, k\to +\infty. $$ Moreover, the convergence is uniform if the metric $h$ runs in a bounded family.

\end{lem}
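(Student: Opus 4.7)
The plan is to identify $\bar{\mu}(\Hilb(h))$, as a matrix in $\sqrt{-1}\Lie{u}(N_{k})$, with the endomorphism $\Pi_{k}\circ B_{k}^{-1}|_{H^{0}(E(k))}$ expressed in an orthonormal basis for $\Hilb(h)$, and then read off the estimate from the asymptotics of the Bergman kernel provided by Theorem \ref{tian-b}.

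Concretely, I would fix an orthonormal basis $\{\tilde s_{1},\dots,\tilde s_{N_{k}}\}$ of $H^{0}(X,E(k))$ for $\Hilb(h)$ (so that $\int_{X}\langle\tilde s_{i},\tilde s_{j}\rangle\,\Omega=\frac{rV}{N_{k}}\delta_{ij}$) and use it to realise the embedding $\iota_{k}: X\hookrightarrow\Gr(r,N_{k})$. In a local $h\otimes\sigma^{k}$-orthonormal frame $\{e_{\alpha}\}$ of $E(k)$ at $x$, writing $\tilde s_{i}(x)=\sum_{\alpha}\tilde z_{\alpha i}(x)\,e_{\alpha}(x)$, the matrix $z(x)$ satisfies
$$(zz^*)(x)\;=\;\sum_{i}\tilde s_{i}(x)\otimes \tilde s_{i}^{*_{h\otimes\sigma^{k}}}(x)\;=\;\frac{rV}{N_{k}}\,B_{k}(x),$$
the second equality following from the observation that $\sqrt{N_{k}/(rV)}\,\tilde s_{j}$ is $L^{2}$-orthonormal and so $\sum_{j}(\sqrt{N_{k}/(rV)}\,\tilde s_{j})\otimes(\sqrt{N_{k}/(rV)}\,\tilde s_{j})^{*}=B_{k}$. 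Substituting $(zz^{*})^{-1}=\tfrac{N_{k}}{rV}B_{k}^{-1}$ into the Fubini-Study formula $\mu([z])=z^*(zz^*)^{-1}z$ and integrating yields
$$\bar{\mu}_{ij}\;=\;\frac{N_{k}}{rV}\int_{X}\langle B_{k}^{-1}\tilde s_{j},\tilde s_{i}\rangle_{h\otimes\sigma^{k}}\,\Omega\;=\;\langle\Pi_{k}\circ B_{k}^{-1}(\tilde s_{j}),\tilde s_{i}\rangle_{\Hilb(h)}.$$
Thus $\bar{\mu}$ is the matrix of the self-adjoint operator $T:=\Pi_{k}\circ B_{k}^{-1}|_{H^{0}(E(k))}$ in the chosen basis, and since the basis realises an isometry between $(\C^{N_{k}},\text{std})$ and $(H^{0}(X,E(k)),\Hilb(h))$, the matrix operator norm equals the Hilbert-space operator norm of $T$.

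From here the estimate reduces to a pointwise bound. Because $\Pi_{k}$ is the identity on $H^{0}$ and has operator norm at most one,
$$T-\tfrac{rV}{N_{k}}\Id_{H^{0}}\;=\;\Pi_{k}\circ\bigl(B_{k}^{-1}-\tfrac{rV}{N_{k}}\Id_{E}\bigr)\big|_{H^{0}},$$
and hence
$$\Big\|\bar{\mu}-\tfrac{rV}{N_{k}}\Id\Big\|_{op}\;\leq\;\Big\|B_{k}^{-1}-\tfrac{rV}{N_{k}}\Id_{E}\Big\|_{C^{0}(\End(E(k)))}.$$
Using the factorisation $B_{k}^{-1}-\tfrac{rV}{N_{k}}\Id_{E}=B_{k}^{-1}\bigl(\Id_{E}-\tfrac{rV}{N_{k}}B_{k}\bigr)$ together with $B_{k}=k^{n}\Id_{E}+O(k^{n-1})$ from Theorem \ref{tian-b} and the Hirzebruch-Riemann-Roch expansion $N_{k}=rVk^{n}+O(k^{n-1})$, the inner bracket has $C^{0}$-norm $O(k^{-1})$ while $\|B_{k}^{-1}\|_{C^{0}}=O(k^{-n})$, producing the desired $O(k^{-n-1})$ bound.

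Uniformity in bounded families of metrics $h$ is inherited directly from the uniformity assertion at the end of Theorem \ref{tian-b}, which transfers verbatim to the $C^{0}$-estimate on $B_{k}^{-1}-\tfrac{rV}{N_{k}}\Id_{E}$. The only genuinely technical point is the identification $\bar{\mu}\leftrightarrow\Pi_{k}\circ B_{k}^{-1}$, which requires carefully tracking the normalization factors relating the $L^{2}$ and $\Hilb(h)$ inner products as well as the rescaling between an $L^{2}$-orthonormal basis and a $\Hilb(h)$-orthonormal one; once that bookkeeping is in place, the lemma is essentially a direct corollary of the Tian-Bouche-Zelditch expansion.
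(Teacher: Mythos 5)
Your proposal is correct and follows essentially the same route as the paper: identify $\bar{\mu}(\Hilb(h))$ with the Toeplitz-type operator $\Pi_k\circ B_k^{-1}$ (the paper states the identity $\bar\mu_{ij}=\int_X\langle s_i,B_k^{-1}s_j\rangle\,\Omega$ directly, while you derive it from $\mu([z])=z^*(zz^*)^{-1}z$ and $zz^*=\tfrac{rV}{N_k}B_k$), bound its operator norm by the $C^0$-norm of the symbol via the projection/multiplication factorization, and conclude from the Bergman kernel expansion together with $N_k=rVk^n+O(k^{n-1})$, with uniformity inherited from Theorem \ref{tian-b}. The only difference is bookkeeping: the paper works with the normalized kernel ($B_k^{-1}=\Id_E+\epsilon_k$, $\epsilon_k=O(k^{-1})$) and the extra factor $\tfrac{rV}{N_k}$, whereas you keep $B_k\sim k^n$ and factor $B_k^{-1}-\tfrac{rV}{N_k}\Id_E=B_k^{-1}(\Id_E-\tfrac{rV}{N_k}B_k)$; both yield the same $O(k^{-n-1})$ bound.
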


\begin{proof}

Let $\{s_{1}, \dots ,s_{N_k}\}$ be an orthonormal basis for $H^{0}(X,E(k))$ with respect to $H_{k}=\Hilb(h)$, i.e $$\int_{X}\inner{s_{i},s_{j}}_{h\otimes \sigma^k} \frac{\omega^n}{n!}=\frac{rV\delta_{ij}}{N_{k}}.$$ We have
\begin{align*}\bar{\mu}_{ij}&=\int_{X}\inner{s_{i},B^{-1}_{k}(h)s_{j}}_{h\otimes \sigma^k} \frac{\omega^n}{n!}\\
&=\int_{X}\inner{s_{i},(Id_E+\epsilon_{k})s_{j}}_{h\otimes \sigma^k} \frac{\omega^n}{n!}\\
&=\frac{rV\delta_{ij}}{N_{k}}+\int_{X}\inner{s_{i},\epsilon_{k}s_{j}}_{h\otimes \sigma^k} \frac{\omega^n}{n!},\end{align*} where $\epsilon_{k}=O(k^{-1})$.
Following Donaldson and Fine \cite{Do2}, \cite{Fi1} : Let $\phi \in L^2(X,End(E)),$ define $$(A_{\phi})_{ij}=\frac{N_{k}}{rV}\int_{X} \inner{s_{i}, \phi s_{j}}_{h\otimes \sigma^k} \frac{\omega^n}{n!}.$$
Note that $A_{\phi}$ defines a linear map from $H^0(E(k))$ into itself.
 i.e. we define $A_{\phi}s_{i}=\sum (A_{\phi})_{ij}s_{j}$ and extend it linearly to $H^0(E(k))$. Note that $A_{\phi}=\pi \circ M_{\phi} \circ j$, where $j: H^0(E(k)) \to L^2(X,E(k))$ is the inclusion, $\pi: L^2(X,E(k))\to H^0(E(k)) $ is the orthogonal projection and $M_{\phi}:  L^2(X,E(k)) \to L^2(X,E(k))$ is defined by $M_{\phi} s=\phi s.$ Hence, $$\norm{A_{\phi}}_{op}=\norm{\pi \circ M_{\phi}\circ j}_{op}\leq \norm{M_{\phi}}_{op}\leq \norm{\phi}_{C^0}.$$
Applying this to $\epsilon_{k}$, we have \begin{align*}\norm{\bar{\mu}(\Hilb(h))-\frac{rV}{N_{k}}Id}_{op}&=\norm{\int_{X}\inner{s_{i},\epsilon_{k}s_{j}}_{h\otimes \sigma^k} \frac{\omega^n}{n!}}_{op}=\frac{rV}{N_{k}}\norm{A_{\epsilon_{k}}}_{op}\\&\leq \frac{rV}{N_{k}}\norm{\epsilon_{k}}_{C^0}=O(k^{-n-1}). \end{align*}

\end{proof}

A consequence of Lemma \ref{0} is the following.

\begin{lem}\label{1}
 There is a constant $C>0$ such that for any matrices $A,B\in \sqrt{-1} \Lie{u}(N)$, one has
$$
\vert \Tr(AB\bar{\mu}) - \frac{1}{ k^n}\Tr(AB)\vert \leq Ck^{-n-1}\Tr(A^ 2)^{1/2}\Tr(B^2)^{1/2}.
$$
\end{lem}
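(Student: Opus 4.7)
The plan is to deduce Lemma \ref{1} as a direct corollary of Lemma \ref{0}, combined with the asymptotic behavior of $N_k$ coming from Riemann--Roch, and the elementary trace inequality already invoked in the proof of Lemma \ref{2normA}.

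First, I would compare $\frac{rV}{N_k}$ with $\frac{1}{k^n}$. Since $N_k = \dim H^0(X,E(k))$ is a polynomial in $k$ of degree $n$ with leading coefficient $rV$ (this is the Hilbert polynomial, and $V = \int_X \omega^n/n!$), one has $N_k = rVk^n + O(k^{n-1})$, and therefore
\begin{equation*}
\frac{rV}{N_k} = \frac{1}{k^n} + O(k^{-n-1}).
\end{equation*}
Combining this with Lemma \ref{0}, which gives $\norm{\bar\mu - \frac{rV}{N_k}\Id}_{op} = O(k^{-n-1})$, and the triangle inequality for the operator norm, I obtain
\begin{equation*}
\Big\|\bar\mu - \tfrac{1}{k^n}\Id\Big\|_{op} = O(k^{-n-1}),
\end{equation*}
uniformly in $h$ if $h$ varies in a bounded family.

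Next, since $\bar\mu \in \sqrt{-1}\mathfrak{u}(N_k)$ (the integral of Hermitian rank-one projectors is Hermitian), the matrix $C := \bar\mu - \frac{1}{k^n}\Id$ is Hermitian. Writing
\begin{equation*}
\Tr(AB\bar\mu) - \tfrac{1}{k^n}\Tr(AB) = \Tr(ABC),
\end{equation*}
I would then apply the inequality $|\Tr(ABC)| \leq \Tr(A^2)^{1/2}\Tr(B^2)^{1/2}\norm{C}_{op}$, valid for Hermitian $A,B,C$ and already used in the proof of Lemma \ref{2normA}. Plugging in the bound on $\norm{C}_{op}$ yields the desired estimate with $C = O(1)$.

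There is no real obstacle: the only substantive input is Lemma \ref{0}, which encapsulates the Bergman-kernel asymptotics via $B_k^{-1} = Id_E + \varepsilon_k$ with $\varepsilon_k = O(k^{-1})$ in $C^0$. The uniformity statement in the lemma propagates from the uniformity of the Bergman expansion (Theorem \ref{tian-b}), so the resulting constant $C$ is uniform when $h$ varies in a family of uniformly equivalent metrics, compact in the smooth topology.
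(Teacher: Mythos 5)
Your proposal is correct and follows essentially the same route as the paper: deduce from Lemma \ref{0} (together with $N_k = rVk^n + O(k^{n-1})$) that $\Vert\bar\mu - k^{-n}\mathrm{Id}\Vert_{op} = O(k^{-n-1})$, and then bound the trace of $AB(\bar\mu - k^{-n}\mathrm{Id})$. If anything, your write-up is slightly more careful than the paper's, since you make the comparison $\frac{rV}{N_k} = \frac{1}{k^n} + O(k^{-n-1})$ explicit and invoke the correct inequality $\vert\Tr(ABC)\vert \leq \Tr(A^2)^{1/2}\Tr(B^2)^{1/2}\Vert C\Vert_{op}$ rather than the paper's abbreviated intermediate step.
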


\begin{proof}
Let $E:=\bar{\mu}-\frac{1}{ k^n}Id$. We have \begin{align*}  \vert \Tr(AB\bar{\mu}) - \frac{1}{ k^n}\Tr(AB)\vert&= \vert \Tr(ABE)\vert   \leq \norm{E}_{op} \vert \Tr(AB)\vert \\&\leq Ck^{-n-1}\norm{A}\norm{B}. \end{align*}

\end{proof}

\begin{lem}\label{2}
 There is a constant $C>0$ such that for any matrix $A\in \sqrt{-1} \Lie{u}(N)$, one has
$$\Vert H_A\Vert^2_{L^2} \leq \frac{1}{k^n}\left(1+ Ck^{-1}\right) \Tr(A^2)$$
\end{lem}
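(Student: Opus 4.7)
The plan is to combine the second identity of Lemma \ref{lemm0} (taken in its symmetric form $A=B$) with the positivity of $P_k^*P_k$ and the asymptotic control provided by Lemma \ref{1}. Everything follows from putting these three ingredients together.

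More precisely, applying Lemma \ref{lemm0}(2) with $B=A$ gives
\begin{equation*}
\Vert H_A \Vert_{L^2}^2 = \tr(A^2 \bar{\mu}) - \tr\bigl(A\, d\bar{\mu}(A)\bigr).
\end{equation*}
By the Corollary identifying $P_k^*P_k$ with the gradient of $\bar{\mu}$, the second term on the right equals $\tr(A\, P_k^*P_k(A)) = \langle P_k A, P_k A \rangle_{L^2} \geq 0$. Hence
\begin{equation*}
\Vert H_A \Vert_{L^2}^2 \leq \tr(A^2 \bar{\mu}).
\end{equation*}

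To finish, I would apply Lemma \ref{1} with $B=A$, which yields
\begin{equation*}
\tr(A^2 \bar{\mu}) \leq \frac{1}{k^n}\tr(A^2) + C k^{-n-1}\tr(A^2) = \frac{1}{k^n}\bigl(1+Ck^{-1}\bigr)\tr(A^2),
\end{equation*}
and substituting back gives the desired inequality. There is no genuine obstacle here: the only subtle point is recognizing that one should discard the nonnegative term $\tr(A\,d\bar{\mu}(A))$, which is precisely the quantity controlled by $P_k^*P_k$ and whose positivity comes for free from the variational/moment-map interpretation established earlier in Section \ref{Sect1}.
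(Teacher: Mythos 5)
Your proposal is correct and follows essentially the same route as the paper: the identity $\Vert H_A\Vert_{L^2}^2+\Vert\xi_A\Vert_{L^2}^2=\Tr(A^2\bar\mu)$ (your use of Lemma \ref{lemm0}(2) together with $\tr(A\,d\bar\mu(A))=\Vert P_k A\Vert^2\geq 0$ is exactly this, since $\Tr(A\,d\bar\mu(A))=\int_X|\xi_A|^2\,\Omega$), followed by dropping the nonnegative term and invoking Lemma \ref{1} with $B=A$. Nothing is missing.
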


\begin{proof}
We have $$\int_{X}\Tr(H_A^2)\Omega+\norm{\xi_{A}}_{L^2}^2=\Tr(A^2\bar{\mu}).$$ Thus, 
$$\norm{H_{A}}_{L^2}^{2} \leq \Tr(A^2\bar{\mu}) \leq \frac{1}{k^n}\left(1+ Ck^{-1}\right) \Tr(A^2).$$

\end{proof}


\begin{lem}\label{5}
 Let $\psi \in L^2$ and let $M_k \in \sqrt{-1} \Lie{u}(N)$ be a sequence of $P_k^*P_k$-eigenvectors satisfying the following conditions
\begin{enumerate}
 \item $\Tr(M_k^2)=k^n + O( k^{n-1})$
\item $\Vert H_{M_k}- \psi\Vert^2_{L^2}=O(k^{-1})$
\end{enumerate}
then there is a constant $C>0$ such that for all $B \in \sqrt{-1} \Lie{u}(N_{k})$ with $\Tr(BM_k)=0$, we have
$$\vert \langle H_B,\psi\rangle_{L^2} \vert ^2 \leq Ck^ {-n-1}\Tr(B^2).$$
\end{lem}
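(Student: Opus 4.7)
The plan is to split the inner product $\langle H_B,\psi\rangle_{L^2}$ into two pieces by writing $\psi=H_{M_k}+(\psi-H_{M_k})$ and bounding each piece separately. The piece $\langle H_B,\psi-H_{M_k}\rangle_{L^2}$ is the easy one: by Cauchy–Schwarz it is bounded by $\Vert H_B\Vert_{L^2}\Vert\psi-H_{M_k}\Vert_{L^2}$, and combining Lemma \ref{2} (which yields $\Vert H_B\Vert^2_{L^2}\le \tfrac{1}{k^n}(1+Ck^{-1})\Tr(B^2)$) with assumption (2) gives an upper bound of order $k^{-(n+1)/2}\Tr(B^2)^{1/2}$, whose square is $O(k^{-n-1})\Tr(B^2)$ as required.

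The core of the argument is the other piece $\langle H_B,H_{M_k}\rangle_{L^2}$. Here I would exploit the fact that $P_k^*P_k$ is the gradient of $\bar\mu$, so that $d\bar\mu(M_k)=\nu_k M_k$ where $\nu_k$ is the eigenvalue of $M_k$. Combined with Lemma \ref{lemm0}(2), this gives
\[
\langle H_{M_k},H_B\rangle_{L^2}=\tr(M_k B\,\bar\mu)-\tr(B\,d\bar\mu(M_k))=\tr(M_k B\,\bar\mu)-\nu_k\tr(BM_k).
\]
Now the orthogonality hypothesis $\tr(BM_k)=0$ kills the second term entirely, so we are left only with $\tr(M_kB\bar\mu)$. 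This is exactly the term that Lemma \ref{1} compares with $\frac{1}{k^n}\tr(M_kB)$; but $\tr(M_kB)=0$ again by assumption, so Lemma \ref{1} directly yields
\[
|\langle H_{M_k},H_B\rangle_{L^2}|\le C k^{-n-1}\Tr(M_k^2)^{1/2}\Tr(B^2)^{1/2}.
\]
Using hypothesis (1) on $\Tr(M_k^2)=k^n+O(k^{n-1})$, the square of this bound is $O(k^{-n-2})\Tr(B^2)$, which is even smaller than the target.

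Finally, combining the two estimates via $|\langle H_B,\psi\rangle|^2\le 2|\langle H_B,H_{M_k}\rangle|^2+2|\langle H_B,\psi-H_{M_k}\rangle|^2$, the dominant contribution comes from the easy Cauchy–Schwarz piece, giving the required $O(k^{-n-1})\Tr(B^2)$ bound. The only slightly delicate conceptual point is recognizing that the assumption $\Tr(BM_k)=0$ must be used twice: once to remove the $\nu_k\tr(BM_k)$ term arising from the eigenvector condition via $d\bar\mu=P_k^*P_k$, and once to make Lemma \ref{1} applicable with vanishing leading term; everything else is a straightforward assembly of the preceding lemmas.
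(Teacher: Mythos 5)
Your proof is correct and follows essentially the same route as the paper: the same decomposition $\psi=H_{M_k}+(\psi-H_{M_k})$, the same use of Lemma \ref{lemm0} together with the eigenvector property and $\Tr(BM_k)=0$ to reduce the first piece to $\Tr(M_kB\bar\mu)$, and the same appeal to Lemmas \ref{1} and \ref{2}. Your phrasing via $d\bar\mu(M_k)=P_k^*P_kM_k$ is just a restatement of the paper's computation $\langle\xi_B,\xi_{M_k}\rangle=\langle B,P_k^*P_kM_k\rangle$, so there is nothing substantively different.
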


\begin{proof}
We have $$\inner{H_B,H_{M_{k}}}_{L^2}=-\inner{\xi_{B},\xi_{M_{k}}}_{L^2}+ \Tr(BM_{k}\bar{\mu}).$$
On the other hand, \begin{align*}  \inner{\xi_{B},\xi_{M_{k}}}_{L^2}&=\inner{P_{k}B,P_{k}M_{k}}_{L^2}=\inner{B,P_{k}^*P_{k}M_{k}}_{L^2} \\&=\lambda \inner{B,M_{k}}=\lambda \Tr(BM_{k})=0.\end{align*}Moreover, lemma \ref{1} implies that $$\abs{\Tr(BM_{k}\bar{\mu})} \leq Ck^{-n-1}\norm{B}\norm{M_{k}}.$$
Hence, \begin{align*}  \abs{\inner{H_B,\psi}_{L^2}}&\leq \abs{\inner{H_B,H_{M_{k}}}_{L^2}}+\abs{\inner{H_B,H_{M_{k}}-\psi}_{L^2}}\\&\leq \abs{\inner{H_B,H_{M_{k}}}_{L^2}}+\norm{H_B}_{L^2}\norm{H_{M_{k}}-\psi}_{L^2}\\&=\abs{\Tr(BM_{k}\bar{\mu})}+\norm{H_B}_{L^2}\norm{H_{M_{k}}-\psi}_{L^2}\\&\leq Ck^{-\frac{n+1}{2}}\Tr(B^2)^{1/2}. \end{align*}

\end{proof}



\subsection{Lower bound for the eigenvalues}

The goal of this section is to prove the following lower bound for the eigenvalues which turns out to be much harder than the upper bound.
\begin{prop}\label{upperboundprop}
 Assume that $\lambda_r<\lambda_{r+1}$ and that the inductive hypothesis holds at level $r$. Then one has the following bound
 $$
 \nu_{r+1,k}\geq \frac{\lambda_{r+1}}{ 4 \pi k^{n+1}}+O(k^{-n-2}).
 $$
\end{prop}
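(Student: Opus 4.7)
The plan, following a strategy analogous to the one used by Fine for the Lichnerowicz operator \cite{Fi3}, is to prove the lower bound by contradiction. Suppose that along a subsequence, still denoted by $k$, one has $4\pi k^{n+1}\nu_{r+1,k}\to\mu$ with $\mu<\lambda_{r+1}$. Fix a $\nu_{r+1,k}$-eigenvector $A_k\in F_{r,k}^\perp\subset\sqrt{-1}\Lie{u}(N_k)$ of $P_k^*P_k$, normalized so that $\Tr(A_k^2)=k^n$. The aim is to extract from $\{H_{A_k}\}$ a unit-norm eigenfunction $\phi\in E_r^\perp$ of $\Delta^E$ with eigenvalue $\mu$, contradicting the min-max formula $\lambda_{r+1}=\min\{\int\Tr(\psi\Delta^E\psi)\,\Omega/\|\psi\|_{L^2}^2:\psi\in E_r^\perp\setminus\{0\}\}$.

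First I would establish that $\|H_{A_k}\|^2_{L^2}=1+O(k^{-1})$: one has $\|\xi_{A_k}\|^2_{L^2}=\nu_{r+1,k}\Tr(A_k^2)=O(k^{-1})$, so the identity from Lemma \ref{lemm0}(1) combined with Lemma \ref{1} gives the claim. By weak compactness, pass to a subsequence so that $H_{A_k}\rightharpoonup \phi$ weakly in $L^2(X,\End(E)_H)$, with $\|\phi\|_{L^2}\leq 1$.

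Next I would show that $\phi\perp E_r$. Decomposing an arbitrary $\psi\in E_r$ into eigenspace components (including the constant mode spanning $\ker\Delta^E=\mathbb{R}\,\Id_E$ handled in the base step), the inductive hypothesis (3) provides an $L^2$-approximant $A_{\psi,k}\in F_{r,k}$ with $\|H_{A_{\psi,k}}-\psi\|_{L^2}=O(k^{-1/2})$ and $\Tr(A_{\psi,k}^2)=O(k^n)$. Since $A_k\perp F_{r,k}$ one has $\Tr(A_k A_{\psi,k})=0$, and $F_{r,k}$ being $P_k^*P_k$-invariant also gives $\Tr(A_k P_k^*P_k A_{\psi,k})=0$. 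Applying Lemma \ref{lemm0}(1) together with Lemma \ref{1} then yields $\langle H_{A_k},H_{A_{\psi,k}}\rangle_{L^2}=O(k^{-1})$, and the triangle inequality forces $\langle H_{A_k},\psi\rangle_{L^2}\to 0$, so $\phi\perp\psi$ in $L^2$.

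The hardest step is to identify $\phi$ as a nonzero eigenfunction of $\Delta^E$ with eigenvalue $\mu$ and to upgrade to strong $L^2$-convergence. For any smooth Hermitian test $\chi\in C^\infty(X,\End(E)_H)$, Lemma \ref{lemm0}(2) combined with the eigenvalue equation yields
\[
\langle H_{A_k},H_{Q_{\chi,k}}\rangle_{L^2}=\Tr\bigl(A_k Q_{\chi,k}(\bar\mu-\nu_{r+1,k}\Id)\bigr).
\]
With the approximation $H_{Q_{\chi,k}}=\chi+O(k^{-1})$ established in the proof of Theorem \ref{thm1} (see \eqref{Hphik}) and $\bar\mu=k^{-n}\Id+O(k^{-n-1})$ from Lemma \ref{0}, the left-hand side controls $\langle\phi,\chi\rangle$ in the limit. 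Testing further with $\chi=Q_{\phi,k}$ for eigenfunctions of $\Delta^E$ and invoking the polarized form of Theorem \ref{thm1} to expand $\|\xi_{A_k}\|^2=\nu_{r+1,k}\Tr(A_k^2)$ in terms of $\int\Tr(H_{A_k}\Delta^E H_{A_k})\,\Omega$, one concludes that $\phi$ solves $\Delta^E\phi=\mu\phi$ weakly; elliptic regularity promotes $\phi$ to a smooth eigenfunction. Combined with $\|H_{A_k}\|_{L^2}\to 1$, the weak limit must then have norm one, upgrading weak to strong $L^2$-convergence. The min-max characterization gives $\mu\geq\lambda_{r+1}$, the desired contradiction. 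The principal obstacle lies in this final step, namely converting the matrix eigenvalue equation into a genuine PDE for the limit $\phi$ and controlling the error terms uniformly over test functions $\chi$, which requires both the leading and sub-leading asymptotics from Theorem \ref{thm1} alongside the near-isometry between matrix and $L^2$ inner products encoded in Lemmas \ref{lemm0}--\ref{2}.
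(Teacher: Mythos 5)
There is a genuine gap, and it sits exactly where you flag ``the principal obstacle''. Your scheme needs to convert the matrix eigenvalue equation $P_k^*P_kA_k=\nu_{r+1,k}A_k$ into spectral information for $\Delta^E$ acting on $H_{A_k}$, but the only asymptotic tool you invoke, Theorem \ref{thm1} (and its polarized form), applies to matrices of the form $Q_{\chi,k}$ with $\chi$ ranging in a bounded family of \emph{smooth} endomorphisms. The eigenvector $A_k$ is an arbitrary element of $\sqrt{-1}\Lie{u}(N_k)$, not a $Q_{\chi,k}$, and there is no a priori regularity control on $H_{A_k}$; so the step ``expand $\norm{\xi_{A_k}}^2=\nu_{r+1,k}\Tr(A_k^2)$ in terms of $\int\Tr(H_{A_k}\Delta^E H_{A_k})\,\Omega$'' is precisely what cannot be done with the tools you cite. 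Testing against smooth $\chi$ via Lemma \ref{lemm0}(2) only produces $\inner{H_{A_k},H_{Q_{\chi,k}}}_{L^2}=\Tr(A_kQ_{\chi,k}\bar\mu)-\nu_{r+1,k}\Tr(A_kQ_{\chi,k})$, which after Lemma \ref{0} identifies $\inner{\phi,\chi}$ with a limit of $k^{-n}\Tr(A_kQ_{\chi,k})$ but contains no second-order (Laplacian) information; you never obtain $\inner{H_{A_k},\Delta^E\chi}\approx\mu\inner{H_{A_k},\chi}$. The missing ingredient is a bound of the type $\norm{\nabla H_{A}}_{L^2}^2\leq(4\pi k+O(1))\norm{P_k(A)}^2$ valid for \emph{all} $A$, which is the paper's key estimate (Proposition \ref{claim}); it is proved there by identifying $\bar\partial H_A=-F_k^*P_k(A)$ through the second fundamental form of $U_r\subset\underline{\C}^{N_k}$ and computing the curvature asymptotics $F_kF_k^*=2\pi k\,T_k+O(1)$. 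Without this (or an equivalent), your weak eigenfunction identification fails, and so does the weak-to-strong upgrade: weak convergence plus $\norm{H_{A_k}}_{L^2}\to1$ does not force $\norm{\phi}_{L^2}=1$, since mass can escape to high frequencies; ruling that out again requires a uniform gradient bound (to apply Rellich compactness), i.e.\ the same missing estimate.

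There is a second, structural problem: a compactness-and-contradiction argument can at best deliver $\liminf_k 4\pi k^{n+1}\nu_{r+1,k}\geq\lambda_{r+1}$, i.e.\ an error of size $o(k^{-n-1})$, whereas the Proposition (and the inductive hypothesis (1) that it must feed, e.g.\ in \eqref{rel1a}) requires the quantitative rate $O(k^{-n-2})$. The paper avoids this by a direct variational argument: it restricts to $W_k^\perp$ (Lemma \ref{3}), uses Lemma \ref{5} to show $H_B$ is almost orthogonal to $E_r$ for $B\in W_k^\perp$, applies the min--max characterization of $\lambda_{r+1}$ to the component $\widetilde H$, and then inserts Proposition \ref{claim} together with Lemmas \ref{lemm0} and \ref{1} to get $\norm{P_kB}^2\geq\bigl(\frac{\lambda_{r+1}}{4\pi k^{n+1}}+O(k^{-n-2})\bigr)\Tr(B^2)$, with every error term explicitly of the right order. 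To repair your proof you would have to both import the second-fundamental-form estimate and replace the soft limiting argument by quantitative estimates, at which point you have essentially reproduced the paper's proof.
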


The crucial step in the proof of Proposition \ref{upperboundprop} will be the following key-estimate

\begin{prop}\label{claim} 
For any $A\in \sqrt{-1}\mathfrak{u}(N_k)$, we have
 $$\Vert \nabla H_A\Vert^2_{L^2}\leq(4\pi k  + O(1))\norm{P_k(A)}^2.$$  
\end{prop}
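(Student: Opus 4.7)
The plan is to compute both sides of the inequality as quadratic forms in the matrix entries of $A$, and to compare them via the Bergman kernel asymptotics developed in Section~\ref{sect2}; this is in the spirit of \cite{Fi3}. First, apply the Weitzenb\"ock relation $\Delta^E=2\Delta_{\bar\partial}-[\sqrt{-1}\Lambda_\omega F_h,\cdot]$ (recorded in the proof of Theorem~\ref{thm1}) to $H_A$: since $\int_X\Tr(H_A[\sqrt{-1}\Lambda_\omega F_h,H_A])\,\Omega=0$ by trace cyclicity (as $H_A$ is Hermitian), one has $\|\nabla H_A\|_{L^2}^2=2\|\bar\partial H_A\|_{L^2}^2$. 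Next, combining Lemma~\ref{lemm0}(1) integrated over $X$ with Lemma~\ref{1} gives
$$
\|P_k(A)\|^2=\Tr(A^2\bar\mu)-\|H_A\|_{L^2}^2=k^{-n}\Tr(A^2)-\|H_A\|_{L^2}^2+O(k^{-n-1})\Tr(A^2).
$$
Substituting these identities, the proposition becomes equivalent to the reduced estimate
$$
2\|\bar\partial H_A\|_{L^2}^2+(4\pi k+O(1))\|H_A\|_{L^2}^2\leq\frac{4\pi k+O(1)}{k^n}\Tr(A^2).
$$

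To establish this reduced inequality, one writes both $\|H_A\|_{L^2}^2=\Tr(A\Xi A)$ and $\|\bar\partial H_A\|_{L^2}^2=\Tr(A\Psi A)$ as quadratic forms on $\sqrt{-1}\mathfrak{u}(N_k)$, with Hermitian symbols $\Xi,\Psi$ whose entries are integrals over $X$ of pointwise contractions of the kernels $s_i\otimes s_j^{*_{h_{FS}}}$ and, respectively, $s_i\otimes\bar\partial s_j^{*_{h_{FS}}}$ (using $\bar\partial s_i=0$). Theorem~\ref{tian-b} yields $\Xi=k^{-n}\Id+O(k^{-n-1})$ in operator norm, which indeed recovers Lemma~\ref{2}. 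For $\Psi$ one needs a derivative version of the Ma--Marinescu expansion---an asymptotic expansion for the diagonal of kernels built from the first-order jets $\bar\partial s_j^{*_{h_{FS}}}$, parallel in structure to Proposition~\ref{toeplitzexp}---yielding $\Psi=k^{-n+1}\Psi_0+O(k^{-n})$, whose leading symbol $\Psi_0$ is controlled by the curvature of $(E(k),h\otimes\sigma^k)$ and in particular by the $L^k$-part $F_{L^k}=-2\pi ik\omega$. The reduced inequality then amounts to a comparison of symbols at orders $k^{-n+1}$ (where the leading term of $2\Psi$ must balance the $k^{-1}$-correction of $4\pi k\Xi$) and $k^{-n}$ (where subleading terms in both symbols get absorbed in the $O(1)$ error).

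The main obstacle is the sharp matching of constants at order $k^{-n+1}$: obtaining exactly the coefficient $4\pi k$ requires pushing the Bergman expansion one order beyond Theorem~\ref{tian-b} for the kernels involving first derivatives of the $s_i$, and then exploiting the precise curvature identity $F_{L^k}=-2\pi ik\omega$ that feeds back on both sides. Once these derivative asymptotics are available, the leading-order comparison reduces to an algebraic consequence of the normalization identity $\sum_j s_j\otimes s_j^{*_{h_{FS}}}=\Id_{U_r}$, differentiated and paired appropriately with $h_{FS}$ (and the identity $\sum_j s_j\otimes\bar\partial s_j^{*_{h_{FS}}}=0$ obtained from it). Uniformity in $A$ is automatic because $\Xi,\Psi$ and $\bar\mu$ depend only on the fixed data $(h,\omega,k)$; the uniformity in the family of metrics $h$ required for Theorems~\ref{thm3} and \ref{thm4} inherits directly from the uniformity clause in Theorem~\ref{tian-b}.
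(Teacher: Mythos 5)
Your opening step (Weitzenb\"ock plus trace cyclicity, giving $\Vert\nabla H_A\Vert_{L^2}^2=2\Vert\bar\partial H_A\Vert_{L^2}^2$) agrees with the paper, but the reduction that follows has a genuine gap: your ``reduced estimate'' is \emph{not} equivalent to Proposition \ref{claim}. Replacing $\Tr(A^2\bar{\mu})$ by $k^{-n}\Tr(A^2)$ via Lemma \ref{1} introduces an error $(4\pi k+O(1))\cdot O(k^{-n-1})\Tr(A^2)=O(k^{-n})\Tr(A^2)$; this can be absorbed into the right-hand side of the reduced estimate (so the Proposition implies it), but to come back you would need $O(k^{-n})\Tr(A^2)\leq O(1)\Vert P_k(A)\Vert^2$, which fails exactly in the regime that matters: $P_k(A)$ vanishes for $A=\mathrm{Id}$, and for $A$ in the low eigenspaces of $P_k^*P_k$ one has $\Vert P_k(A)\Vert^2=\nu_{j,k}\Tr(A^2)\sim k^{-n-1}\Tr(A^2)$, far below $k^{-n}\Tr(A^2)$. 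Hence even a complete proof of your reduced estimate only yields the weaker bound $\Vert\nabla H_A\Vert^2_{L^2}\leq 4\pi k\,\Vert P_k(A)\Vert^2+O(k^{-n})\Tr(A^2)$, and this additive term destroys the application: in Step III of the proof of Proposition \ref{upperboundprop} it produces a contribution $O(k^{-n})\Tr(B^2)/\lambda_{r+1}$ of the same order as the main term $k^{-n}\Tr(B^2)$ coming from \eqref{eq1R}, so the sharp lower bound $\nu_{r+1,k}\geq \lambda_{r+1}/(4\pi k^{n+1})+O(k^{-n-2})$ is lost. The whole point of Proposition \ref{claim} is that the bound is purely multiplicative in $\Vert P_k(A)\Vert^2$, with no additive error measured by $\Tr(A^2)$.

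The missing idea is a \emph{pointwise} link between $\bar\partial H_A$ and $P_k(A)$, rather than a comparison of global quadratic forms. The paper regards $A$ as a parallel section of $\End(\underline{\C}^{N_k})$, uses the second fundamental form $F_k$ of $E(k)=\iota_k^*U_r\subset\underline{\C}^{N_k}$ to obtain the identity $\bar\partial^{\End(E(k))}H_A=-F_k^*P_k(A)$ (formula \eqref{crucial_rel}), and then proves the fibrewise operator bound $F_kF_k^*=2\pi k\,T_k+O(1)$ (Lemmas \ref{2nd_fund_form_curvature}--\ref{asymptotics_FF*}), the constant $2\pi k$ coming precisely from the curvature $R(L^k)=-2\pi k\sqrt{-1}\omega$ that you also invoke. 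This gives $2\Vert\bar\partial H_A\Vert^2_{L^2}=2\langle P_k(A),F_kF_k^*P_k(A)\rangle\leq(4\pi k+O(1))\Vert P_k(A)\Vert^2$ directly and uniformly in $A$, with no Bergman-kernel error floor. By contrast, your route also relies on an unproved ``derivative version'' of the Ma--Marinescu expansion with sharp operator-norm control of the subleading symbol; note that the diagonal expansions of Theorem \ref{tian-b} and Proposition \ref{toeplitzexp} are uniform only for fixed bounded families of smooth endomorphisms, whereas $H_A$ depends on $k$ and may oscillate at scale $\sqrt{k}$, so even that ingredient would require a fibrewise argument of the kind the paper actually uses.
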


The proof of this result makes use of the second fundamental form of a couple of holomorphic sub-bundles. In order to set things clear and for the sake of completeness, we begin by recalling some general theory about the second fundamental form. The way we present it here is close to Fine's treatment in \cite{Fi3}. Let $E\rightarrow X$ be a holomorphic Hermitian vector bundle over a complex manifold. Suppose $S$ is a holomorphic sub-bundle of $E$ with quotient $Q$. In other words, we have a short exact sequence
\begin{equation}
\label{ses}
0\rightarrow S \rightarrow E \rightarrow Q \rightarrow 0.
\end{equation}
Denote by $\nabla^E$ the Chern connection on $E$. By restriction we also get a Hermitian metric on $S$. Moreover, the Hermitian metric allows us to identify the quotient bundle $Q$ with $S^\perp$ as smooth vector bundles, so that we have a smooth splitting
$$
 E=S\oplus Q.
$$ 
Hence we also obtain a Hermitian metric on $Q$ which allows us to define Chern connections $\nabla^S$ and $\nabla^Q$ on $S$ and $Q$ respectively. It is easy to check that $\nabla^S$ is the composition of $\nabla^E$ followed by the projection to $S$. \\

There are two ways to look at the second fundamental form of a short exact sequence as in \eqref{ses}. Either you measure the failure of $S$ to be a parallel sub-bundle of $E$, or you look at $S^\perp$ and measure its  failure of being a holomorphic sub-bundle. The first point of view can be described as follows. Denote by $F$ the composition of $\nabla^E$ with the projection to $Q$. This defines an operator 
$$
F:C^\infty(S)\overset{\nabla^E}{\rightarrow}\Omega^1(E)\rightarrow\Omega^1(Q)
$$
called the \emph{second fundamental form} of \eqref{ses}. Note that since $S$ is a holomorphic sub-bundle, the $(0,1)$-part of $\nabla^E$ leaves $S$ invariant and thus $F$ is a section of the bundle $\Lambda^{1,0}\otimes\Hom(S,Q)$. \\

On the other hand, observe that if $S^\perp$ was a holomorphic sub-bundle, it would be invariant under $\delb^E$. The failure of $S^\perp$ of being a holomorphic sub-bundle can then be measured by the composition of $\delb^E$ with the projection to $S$. This defines a map
\begin{equation}
\label{dual_sec}
\tilde{F}:C^\infty(S^\perp)\overset{\delb^E}{\rightarrow}\Omega^{0,1}(E)\rightarrow\Omega^{0,1}(S).
\end{equation}
Hence we can think of $\tilde F$ as a section of $\Lambda^{0,1}\otimes\Hom(S^\perp,S)$.
One can check that under the identification $Q\simeq S^\perp$ the map $\tilde{F}$ is nothing else than $F^*$, the dual of $F$ obtained by using conjugation in the $(1,0)$-form factor and taking the usual adjoint in the $\Hom(S,Q)$ factor. \\

On one hand, write
$$
F\wedge F^*\in\Lambda^{1,1}\otimes\End(Q)
$$
where we take the genuine wedge product on the form part and composition on the homomorphism part. On the other hand, we consider
$$
F^*\wedge F\in\Lambda^{1,1}\otimes\End(S).
$$
Denote by $R(S)$, $R(Q)$ and $R(E)$ the curvatures of the Chern connections of $S$, $Q$ and $E$ respectively. By the splitting of $E=S\oplus Q$ as smooth vector bundles, we get an induced splitting
$$
\End(E)=\End(S)\oplus\Hom(S,Q)\oplus\Hom(Q,S)\oplus\End(Q).
$$
If we write now $R(E)|_S$ and $R(E)|_Q$ for the components of $R(E)$ in $\End(S)$ and $\End(Q)$ respectively, we have the following standard lemma. See for instance page 78 of \cite{griffithsharris} for a proof.

\begin{lem}
\label{2nd_fund_form_curvature}
\begin{align*}
F^*\wedge F&=R(S)-R(E)|_S \\
F\wedge F^*&=R(Q)-R(E)|_Q.
\end{align*}
\end{lem}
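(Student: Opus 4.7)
The approach is entirely classical and amounts to writing the Chern connection $\nabla^E$ as a matrix of operators with respect to the smooth splitting $E = S \oplus Q$, and then computing its curvature blockwise. The plan is to show first that, with respect to this decomposition,
\[
\nabla^E = \begin{pmatrix} \nabla^S & \tilde F \\ F & \nabla^Q \end{pmatrix}.
\]
The diagonal entries are forced by the characterization of Chern connections: since $S$ is a holomorphic sub-bundle, $\bar\partial^E$ preserves $S$, hence the $(0,1)$-part of the induced connection on $S$ is $\bar\partial^S$; combined with metric compatibility this identifies $\nabla^S$ as the $S$-projection of $\nabla^E|_S$, and analogously $\nabla^Q$ appears as the $S^\perp$-projection of $\nabla^E|_{S^\perp}$. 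Applying metric compatibility to the pairing $\langle s,q\rangle_E = 0$ for $s\in S$ and $q\in S^\perp$ then forces the lower-left and upper-right blocks to be adjoint to one another, which is precisely the identification $\tilde F = F^*$ recalled in the excerpt around \eqref{dual_sec}.

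Next I would compute $R(E) = d\nabla^E + \nabla^E \wedge \nabla^E$ in this matrix form. Expanding $\nabla^E \wedge \nabla^E$, the diagonal blocks pick up exactly the terms $\tilde F \wedge F \in \Omega^{1,1}(\End(S))$ and $F \wedge \tilde F \in \Omega^{1,1}(\End(Q))$, together with the squared diagonal blocks $\nabla^S \wedge \nabla^S$ and $\nabla^Q \wedge \nabla^Q$ which combine with $d\nabla^S$ and $d\nabla^Q$ to yield $R(S)$ and $R(Q)$ respectively. Reading off the $(S,S)$- and $(Q,Q)$-components of $R(E)$ and rearranging then gives both identities of the lemma.

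The only real subtlety will be a careful accounting of signs and wedge-product conventions. In particular, $F$ is a $(1,0)$-form with values in $\Hom(S,Q)$ whereas $F^*$ is a $(0,1)$-form with values in $\Hom(Q,S)$ (the form part being conjugated together with the $\Hom$-adjoint), so the composites $F^*\wedge F$ and $F\wedge F^*$ are genuinely $(1,1)$-forms, and matrix multiplication has to be correctly interleaved with the wedge on the form part. Once these conventions are fixed consistently, the verification is a short direct computation, and matches the treatment in Griffiths--Harris cited in the excerpt.
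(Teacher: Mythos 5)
Your proposal is correct and is essentially the argument the paper itself points to: the paper gives no proof but cites Griffiths--Harris (p.~78), where the lemma is established exactly by your block decomposition of the Chern connection with the second fundamental form in the off-diagonal entries, followed by a blockwise curvature computation whose diagonal terms give the two Gauss--Codazzi identities. The only caveat is the sign you defer to ``bookkeeping'': with the usual conventions metric compatibility forces the upper-right block to be $-F^*$ (not $F^*$), and it is precisely this sign that produces $F^*\wedge F=R(S)-R(E)|_S$ and $F\wedge F^*=R(Q)-R(E)|_Q$ rather than the opposite signs.
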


Assuming that the complex manifold $X$ carries a Hermitian metric, we can identify 
$$
\Lambda^{1,0}\simeq(\Lambda^{0,1})^*.
$$
Using this we can interpret $F$ as a homomorphism 
$$
F:\Lambda^{0,1}\otimes S\rightarrow Q
$$
and similarly $F^*$ as a homomorphism
$$
F^*:Q\rightarrow \Lambda^{0,1}\otimes S.
$$
The upshot is that these two maps are adjoint with respect to the fibrewise Hermitian metrics on $\Lambda^{0,1}\otimes S$ and $Q$. Furthermore we will be interested in the compositions $FF^*$ and $F^*F$ of these maps. Namely
$$
\Lambda^{0,1}\otimes S\overset{F}{\longrightarrow} Q \overset{F^*}{\longrightarrow} \Lambda^{0,1}\otimes S
$$
and
$$
Q\overset{F^*}{\longrightarrow}\Lambda^{0,1}\otimes S\overset{F}{\longrightarrow}Q.
$$
One can then check that under these identifications $F^*F$ is identified with $-F^*\wedge F$ whereas $FF^*$ is identified with $\Tr_X(F\wedge F^*)$. Here the trace is taken over the $\Lambda^{1,1}$-component of $F\wedge F^*$ using the Hermitian metric on $X$ (see Fine \cite{Fi3} page 28). \\

We will now use this theory in the following situation. Still suppose that we have a short exact sequence of holomorphic vector bundles
$$
0\rightarrow S \rightarrow E \rightarrow Q \rightarrow 0.
$$
Taking duals, we get another short exact sequence 
$$
0\rightarrow Q^* \rightarrow E^* \rightarrow S^* \rightarrow 0
$$
and taking the tensor product with the bundle $E$ yields
\begin{equation}
\label{ses_1}
0\rightarrow \Hom(Q,E) \rightarrow \End(E) \rightarrow \Hom(S,E) \rightarrow 0.
\end{equation}
The Hermitian metric on $E$ induces metrics on all of these bundles. Let $A\in\Gamma\left(\End(E)\right)$ be Hermitian and covariant constant with respect to the Chern connection on $\End(E)$, i.e.
$$
\nabla^{\End(E)}A=0.
$$
If we use the metric on $\End(E)$ to split 
$$
\End(E)=\Hom(Q,E)\oplus\Hom(S,E)
$$
as smooth vector bundles, we can write 
$$
A=
\begin{pmatrix}
A_1 \\
A_2
\end{pmatrix}
$$
where $A_1\in\Gamma(\Hom(Q,E))$ and $A_2\in\Gamma(\Hom(S,E))$. Furthermore we have 
$$
\delb^{\End(E)}=
\begin{pmatrix}
\delb^{\Hom(Q,E)} & \eta^* \\
0 & \delb^{\Hom(S,E)}
\end{pmatrix}
$$
where $\eta^*$ is the dual of the second fundamental form of the short exact sequence given in \eqref{ses_1}, defined as in \eqref{dual_sec}. Applying it to our covariant constant section $A$ yields
$$
\begin{pmatrix}
0 \\
0
\end{pmatrix}
=\delb^{\End(E)}A
=\begin{pmatrix}
\delb^{\Hom(Q,E)}A_1 + \eta^*A_2 \\
\delb^{\Hom(S,E)}A_2
\end{pmatrix}.
$$
In particular,
\begin{equation}
\label{a2_holomorphic}
\delb^{\Hom(S,E)}A_2=0
\end{equation}
meaning that $A_2\in\Gamma(\Hom(S,E))$ is a holomorphic section. \\

Now $\End(S)$ is a holomorphic sub-bundle of $\Hom(S,E)$ with quotient $\Hom(S,Q)$. In other words we have another short exact sequence
\begin{equation}
0\rightarrow\End(S)\rightarrow\Hom(S,E)\rightarrow\Hom(S,Q)\rightarrow 0.
\end{equation}
Again we use the Hermitian metric to split this sequence and write
$$
A_2=
\begin{pmatrix}
H_A \\
P_A
\end{pmatrix}
$$
where $H_A\in\Gamma(\End(S))$ and $P_A\in\Gamma(\Hom(S,Q))$. Writing
$$
\delb^{\Hom(S,E)}
=\begin{pmatrix}
\delb^{\End(S)} & F^*\\
0 & \delb^{\Hom(S,Q)}
\end{pmatrix}
$$
and applying it to the holomorphic section $A_2$ gives in particular
\begin{equation}
\label{relation_delbH_P}
\delb^{\End(S)}H_A=-F^* P_A.
\end{equation}
This formula is crucial for what follows. In fact it gives the geometric relation between the derivative of $H_A$ in terms of $P_A$. \\

In order to prove Proposition \ref{claim} we will now apply the above discussion to our picture. Recall that we used higher and higher powers of the line bundle $L$ tensored with $E$ to get a sequence of embeddings of $X$ into the Grassmannians $\Gr(r,N_k)$ which can be summarized by the following diagram,
$$
\begin{CD}
E(k)=\iota_k^*U_r @>>>U_r\\
@VVV@VVV\\
X @>\iota_k>>G(r,N_k)
\end{CD}
$$
We have the following short exact sequence of holomorphic vector bundles
$$
0\rightarrow U_r\rightarrow \underline{\C}^{N_k} \rightarrow Q\rightarrow 0 
$$
where $\underline{\C}^{N_k}$ denotes the trivial bundle over the Grassmannian. As explained, we can use the metric to identify the quotient $Q$ with $U_r^\perp$ as smooth vector bundles. 

Let $A\in\sqrt{-1}\Lie{u}(N_k)$. We may think of $A$ as a constant section of $\End(\underline{\C}^{N_k})$ so that we can apply the discussion from above. It is then just a matter of unwinding the definitions to see that the Hermitian endomorphism $H_A$ of $U_r$ defined in \eqref{H_A} coincides with the one described in the above discussion. Furthermore the holomorphic tangent bundle on the Grassmannian can be identified with $\End(U_r,Q)$. Under this identification, the section $P_k(A)$ of $T\Gr(r,N)_{|\iota_k(X)}$ defined by \eqref{P(A)} and Section \ref{Sect1} corresponds to the restriction to $\iota_k(X)$ of what we called $P_A$ just above. Formula \eqref{relation_delbH_P} gives then the link between the derivative of $H_A$ and $P_k(A)$ by
\begin{equation}
\label{crucial_rel}
\delb^{\End(U_r)}H_A=-F_k^* P_k(A).
\end{equation}
The next step in our discussion will be to control the asymptotics of the operator $F_kF_k^*$. However, it turns out to be easier to consider the operator $F_k^*F_k$ first and then pass to $F_k^*F_k$.

\begin{lem}
$\norm{F_k^*F_k-2\pi k\Id}_{C^0(\mathrm{op})}=O(1)$. Here $\Id$ denotes the identity in $\End\left(\Lambda^{0,1}\otimes\End\left(E(k)\right)\right)$ and $C^0(\mathrm{op})$ is the $C^0$-norm on sections of $\End\left(\Lambda^{0,1}\otimes\End\left(E(k)\right)\right)$ associated to the fibrewise operator norm.
\end{lem}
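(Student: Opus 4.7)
The plan is to compute $F_k^{*}F_k$ via Lemma \ref{2nd_fund_form_curvature}, which expresses $F_k^{*}\wedge F_k$ in terms of Chern curvatures, and then to invoke Theorem \ref{tian-b} to extract the $k$-asymptotics.

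First I would apply Lemma \ref{2nd_fund_form_curvature} to the short exact sequence
$$0\to \End(U_r)\to \Hom(U_r,\underline{\C}^{N_k})\to \Hom(U_r,Q)\to 0$$
introduced just above the statement. The standard Hermitian metric on the trivial bundle $\underline{\C}^{N_k}$ is flat, so the ambient curvature $R(\Hom(U_r,\underline{\C}^{N_k}))=R(U_r^{*})\otimes \Id$ is entirely determined by $R(U_r^{*})$, and a short linear-algebra computation shows that $F_k^{*}\wedge F_k$, as a $(1,1)$-form valued endomorphism of $\End(U_r)$, acts on $\phi\in\End(U_r)$ by left multiplication with the Chern curvature $R(U_r)$ of $U_r$.

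Next I would pull back via $\iota_k$ and identify $\iota_k^{*}U_r\simeq E(k)=E\otimes L^k$ equipped with the pulled-back Fubini--Study metric $h_{\mathrm{FS},k}\otimes \sigma^k$. By the construction of the map $\mathrm{FS}$, this metric differs from $h\otimes \sigma^k$ by a factor involving the Bergman endomorphism $B_k\in C^{\infty}(X,\End(E))$, and taking $\bar\partial\partial \log$ of this relation produces
$$R(h_{\mathrm{FS},k}\otimes \sigma^k)=R(h)+k\,F_\sigma\otimes \Id_E-\bar\partial\partial\log B_k.$$
Theorem \ref{tian-b} supplies a $C^\infty$ expansion $B_k=k^n(\Id_E+O(k^{-1}))$ uniform on bounded families of metrics on $E$, so that $\bar\partial\partial\log B_k=O(k^{-1})$ in $C^0$ and, modulo $O(1)$ terms in the pointwise operator norm, the dominant part of $R(U_r)|_X$ is $k\,F_\sigma\otimes \Id_E$.

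Finally I would translate this curvature statement into the operator statement for $F_k^{*}F_k$ on $\Lambda^{0,1}\otimes \End(E(k))$, using the identification $F^{*}F\leftrightarrow -F^{*}\wedge F$ explained in the paragraphs preceding the lemma. This identification pairs the $(1,1)$-form component against the K\"ahler form $\omega$, and together with the normalisation $\frac{\sqrt{-1}}{2\pi}\ddbar\log\sigma=\omega$ it converts the leading term $k\,F_\sigma\otimes \Id_E$ into $2\pi k\cdot \Id$, while the subdominant terms remain bounded in the pointwise operator norm and the uniformity is inherited from the uniform part of Theorem \ref{tian-b}. The main obstacle I expect is bookkeeping: carefully reconciling the pointwise identification between the $(1,1)$-form valued endomorphism $F_k^{*}\wedge F_k$ and the operator $F_k^{*}F_k$ on $\Lambda^{0,1}$-valued sections, and tracking the signs and normalisations arising from $F_\sigma$, the Fubini--Study metric and the Bergman kernel expansion, so as to land on the precise constant $2\pi k$ rather than a nearby multiple.
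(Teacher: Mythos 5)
Your proposal is correct and follows essentially the same route as the paper: Lemma \ref{2nd_fund_form_curvature} applied to the sequence $0\to \End(U_r)\to \Hom(U_r,\underline{\C}^{N_k})\to \Hom(U_r,Q)\to 0$, flatness of the trivial bundle, the identification $\iota_k^*U_r\simeq E(k)$ so that the dominant curvature term is $-2\pi k\sqrt{-1}\omega\otimes\Id$, and the raising-of-indices identification of $-F_k^*\wedge F_k$ with $F_k^*F_k$. Your packaging of $F_k^*\wedge F_k$ as left multiplication by $R(U_r)$ is just a compressed form of the paper's two-term difference, and your explicit use of Theorem \ref{tian-b} to control the Fubini--Study metric on the $E$-factor makes precise the paper's remark that ``$R(E)$ isn't growing in $k$''.
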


\begin{proof}
Recall that under the identification of $\Lambda^{1,0}$ with $(\Lambda^{0,1})^*$, $F_k^*F_k$ is identified with $-F_k^*\wedge F_k$. Moreover by lemma \ref{2nd_fund_form_curvature} we know that 
$$
-F_k^*\wedge F_k=R\left(\Hom(\iota_k^*U_r,\iota_k^*\underline{\C}^{N_k})\right)|_{\End(\iota_k^*U_r)}-R\left(\End(\iota_k^*U_r)\right)
$$
where the curvatures are computed with respect to the pull-back metrics. By definition of the embeddings, $E(k)=\iota_k^*U_r$, so that we can rewrite the right-hand side as
\begin{equation}
\label{diff_curv}
R\left(\Hom(E(k),\underline{\C}^{N_k})\right)|_{\End(E(k))}-R\left(\End\left(E(k)\right)\right).
\end{equation}
Let's start computing the first term. Since $\underline{\C}^{N_k}$ is flat, we get
\begin{align*}
R\left(\Hom(E(k),\underline{\C}^{N_k})\right)
&=R\left(E(k)^*\right)\otimes\Id_{\underline{\C}^{N_k}}, \\
&=-R\left(E(k)\right)^T\otimes\Id_{\underline{\C}^{N_k}}. 
\end{align*}
So we see that it boils down to calculate the curvature of $E(k)=E\otimes L^k$ computed with respect to the metric $\FS(h)\otimes \sigma^k$. Since on one hand $R(L^k)=-2\pi k\sqrt{-1}\omega$ and on the other hand $R(E)$ isn't growing in $k$, we get
$$
R(E(k))=O(1)+\Id_E\otimes R(L^k)=-2\pi k\sqrt{-1}\omega\otimes\Id_{E(k)}+O(1).
$$
Putting these together, we see that
$$
R\left(\Hom(E(k),\underline{\C}^{N_k+1})\right)|_{\End(E(k))}=2\pi k\sqrt{-1}\omega\otimes\Id_{\End(E(k))}+O(1).
$$
Furthermore, it is easy to see that $\End(E(k))=\End(E)$ so that the second term in \eqref{diff_curv} is of order $O(1)$. Hence, 
$$
-F_k^*\wedge F_k=2\pi k\sqrt{-1}\omega\otimes\Id_{\End(E(k))}+O(1).
$$
Raising indices to pass form $-F_k^*\wedge F_k$ to $F_k^*F_k$ proves the Lemma.
\end{proof}

We will now explain how to pass from $F_k^*F_k$ to $F_kF_k^*$. Denote by $T_k\in\End(\Hom(E(k),Q))$ the orthogonal projection onto the image of $F_k:\Lambda^{0,1}\otimes\End(E(k))\rightarrow \Hom(E(k),Q)$.

\begin{lem}
\label{asymptotics_FF*}
$\norm{F_kF_k^*-2\pi k T_k}_{C^0(\mathrm{op})}=O(1)$, where we use the $C^0$-norm on sections of $\End(\Hom(E(k),Q))$ associated to the fibrewise operator norm.
\end{lem}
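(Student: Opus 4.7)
The plan is to deduce this statement from the preceding lemma by a purely fibrewise polar decomposition argument: the operators $F_k^*F_k$ and $F_kF_k^*$ share the same non-zero spectrum, and the preceding lemma tells us that fibrewise $F_k^*F_k$ is positive definite and close to $2\pi k\, \Id$.

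First, I would fix a point $x\in X$ and denote $V_x:=(\Lambda^{0,1}\otimes\End(E(k)))_x$ and $W_x:=\Hom(E(k),Q)_x$, and write $F_k|_x:V_x\to W_x$ for the fibrewise map. For $k$ large enough, the preceding lemma gives
$$F_k^*F_k = 2\pi k\, \Id_{V_x} + R_k, \qquad \|R_k\|_{C^0(\mathrm{op})}=O(1),$$
uniformly in $x$. In particular $F_k^*F_k$ is positive definite uniformly on $X$, so $F_k|_x$ is injective. Hence the polar decomposition $F_k = U_k|F_k|$ makes sense, where $|F_k|:=(F_k^*F_k)^{1/2}$ and $U_k:V_x\to W_x$ is a fibrewise isometric embedding with $U_k^*U_k=\Id_{V_x}$. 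Because $U_k$ is an isometry onto its image, $U_kU_k^*$ is the orthogonal projection onto $\mathrm{Im}(U_k)=\mathrm{Im}(F_k|_x)$, which by definition is $T_k|_x$.

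Next I would compute directly
$$F_kF_k^* = U_k|F_k|^2U_k^* = U_k(F_k^*F_k)U_k^* = 2\pi k\, U_kU_k^* + U_kR_kU_k^* = 2\pi k\, T_k + U_kR_kU_k^*.$$
Since conjugation by a partial isometry does not increase the operator norm, $\|U_kR_kU_k^*\|_{C^0(\mathrm{op})}\leq \|R_k\|_{C^0(\mathrm{op})}=O(1)$, which proves the Lemma.

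There is no real obstacle here: the only point to verify is uniformity in $x$ of the polar decomposition, which is automatic because the bound in the preceding lemma is uniform over the compact base $X$, ensuring that $F_k^*F_k$ is bounded below by a positive constant times $k$ everywhere on $X$ for $k\gg 0$.
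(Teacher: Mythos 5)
Your argument is correct, and it reaches the Lemma by a route that is close in substance to the paper's but packaged differently. The paper (quoting Fine's Lemma 33) argues spectrally: since $\ker F_kF_k^*=\ker F_k^*=\ker T_k$ and $F_kF_k^*$ is self-adjoint and vanishes together with $T_k$ off the image of $F_k$, it suffices to know that the non-zero eigenvalues of $F_kF_k^*$ agree with those of $F_k^*F_k$ (eigenvectors matched by $F_k^*$), and these are $2\pi k+O(1)$ by the preceding lemma. You instead use the fiberwise polar decomposition $F_k=U_k\lvert F_k\rvert$, which is legitimate for $k\gg 0$ precisely because the preceding lemma gives $F_k^*F_k\geq (2\pi k-C)\,\Id>0$ uniformly over the compact base, and you obtain the explicit identity $F_kF_k^*=2\pi k\,U_kU_k^*+U_kR_kU_k^*=2\pi k\,T_k+U_kR_kU_k^*$, after which the bound follows because conjugation by the isometry $U_k$ (with $U_k^*U_k=\Id$) does not increase the fibrewise operator norm. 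Both proofs rest on the same standard relation between $AA^*$ and $A^*A$ combined with the same key input; the trade-off is that your version yields a clean operator identity and makes the uniformity in the base point completely explicit, at the cost of needing fibrewise injectivity of $F_k$ (harmless here, since the statement is asymptotic in $k$), whereas the paper's spectral argument is shorter and needs only the identification of kernels and the coincidence of non-zero spectra, without any invertibility assumption.
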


\begin{proof}
The argument is essentially the same as the proof of Lemma $33$ in \cite{Fi3}, adapted to our situation. Accordingly, we give nearly word-by-word the same proof. Clearly we have that $\ker F_kF_k^*=\ker T_k$ and since $F_kF_k^*$ is self-adjoint, it is enough to prove that all the non-zero eigenvalues are given by $2\pi k+O(1)$.  But the non-zero eigenvalues of $F_kF_k^*$ and $F_k^*F_k$ are the same since the eigenvectors are matched up by $F_k^*$. The result then follows from the previous lemma.
\end{proof}

Having gathered all of these pre-requisites, we are finally in position to prove Proposition \ref{claim}. \\

\noindent\emph{Proof of Proposition \ref{claim}.} Let $A\in \sqrt{-1}\mathfrak{u}(N_k)$, we have
\begin{align*}
 \Vert \nabla^{\End(E)} H_A\Vert^2_{L^2}
 &= \int_X\tr\left(H_A\Delta^{\End(E)}H_A)\right)\Omega, \\
 &=\int_X\tr\left(H_A\left(2\Delta_{\delb} H_A-\sqrt{-1}[\Lambda F,H_A]\right)\right)\Omega, \\
 &=2\int_X \tr\left(H_A\Delta_{\delb} H_A\right) \Omega, \\
 &=2\int_X |\delb H_A|^2 \; \Omega.
 \end{align*}
Using the relation $\delb H_A=-F_k^* P_k(A)$ given in \eqref{crucial_rel} and lemma \ref{asymptotics_FF*} we further get that
\begin{align*}
\int_X |\delb H_A|^2 \; \Omega
&=\langle P_k(A), F_kF_k^* P_k(A) \rangle, \\
&=\langle P_k(A),\left(2\pi k T_k +O(1)\right) P_k(A) \rangle, \\
&\leq (2\pi k +O(1))\norm{P_k(A)}^2.
\end{align*}
This concludes the proof of Proposition \ref{claim}.
\qed
\vspace{0.5cm}

Assume that the induction hypothesis holds at level $r$ and let $\lambda_{r} < \lambda_{r+1}$. We have the following. 

\begin{lem}\label{3}
 Let $\phi_{0},\dots , \phi_{r}$ be an $L^2$-orthonormal basis for $E_{r}$ such that $\Delta^E \phi_{i}=\lambda_{i} \phi_{i}$. For integers $0<p<q\leq r$, satisfying  $\lambda_{p-1}<\lambda_p = \lambda_{p+1} = ... = \lambda_q <\lambda_{q+1}$ and $p\leq j\leq q$, let $A_{j,k} \in F_{p,q,r}$ are given by the induction hypotheses. Let $W_k\subset F_{r,k}$ the span of the vectors $A_{j,k}$ $(0\leq j \leq r).$ 
Then 
$$\nu_{r+1,k} \geq \min_{B \in W_k^\perp} \frac{\Vert P_k B\Vert^2}{\tr(B^2)}.$$
\end{lem}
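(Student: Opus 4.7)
The plan is to invoke the Courant-Fischer-Weyl min-max characterization of eigenvalues of the self-adjoint operator $P_k^*P_k$ on $\sqrt{-1}\Lie{u}(N_k)$ endowed with the Killing form. In the dual form needed here, this identity reads
$$\nu_{r+1,k} \;=\; \max_{V\subset \sqrt{-1}\Lie{u}(N_k),\ \dim V = r+1}\ \min_{B\in V^{\perp},\ B\neq 0} \frac{\norm{P_k B}^{2}}{\tr(B^{2})}.$$
The plan is to use the span $W_k$ itself as a test subspace (after an inessential dimension adjustment). First I would note that by construction $W_k$ is the linear span of the $r+1$ matrices $A_{0,k},\dots,A_{r,k}$, so $\dim W_k \leq r+1$. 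If the span has dimension strictly less than $r+1$, I would extend $W_k$ arbitrarily to a subspace $\widetilde{W}_k\subset \sqrt{-1}\Lie{u}(N_k)$ of dimension exactly $r+1$; otherwise I would set $\widetilde{W}_k = W_k$.

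Applying the min-max identity above to the test subspace $V=\widetilde{W}_k$ immediately gives
$$\nu_{r+1,k}\;\geq\; \min_{B\in \widetilde{W}_k^{\perp},\ B\neq 0} \frac{\norm{P_k B}^{2}}{\tr(B^{2})}.$$
Since $\widetilde{W}_k\supseteq W_k$, the orthogonal complements satisfy $\widetilde{W}_k^{\perp}\subseteq W_k^{\perp}$, so taking the infimum of the Rayleigh quotient $B\mapsto \norm{P_k B}^{2}/\tr(B^{2})$ over the larger set $W_k^{\perp}$ can only decrease its value; chaining the two inequalities yields the bound stated in the lemma. There is no genuine obstacle: the argument rests solely on the spectral min-max principle and the trivial observation that the span of $r+1$ vectors has dimension at most $r+1$. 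The reason this reformulation will be useful in what follows is that the particular subspace $W_k\subset F_{r,k}$ is tied via the inductive hypothesis to the eigenvectors $\phi_j$ of $\Delta^E$, so minimizing over $W_k^{\perp}$ rather than over $F_{r,k}^{\perp}$ lets one compare test vectors to $\delb$-norms and, in combination with Proposition \ref{claim}, carry through the induction step for the lower bound of $\nu_{r+1,k}$.
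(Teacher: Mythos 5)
Your argument is correct. The operator $P_k^*P_k$ is self-adjoint for the trace inner product on $\sqrt{-1}\Lie{u}(N_k)$, the max--min form of Courant--Fischer you quote is the right one for the eigenvalue indexed by $r+1$ (counting from $0$), and enlarging $W_k$ to a subspace $\widetilde W_k$ of dimension exactly $r+1$ only shrinks the orthogonal complement, so the chain $\nu_{r+1,k}\geq \min_{\widetilde W_k^\perp} \geq \min_{W_k^\perp}$ of the Rayleigh quotient is valid.

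Your route is, however, leaner than the paper's. The paper first uses inductive hypotheses (2) and (3) to establish $\Tr(A_{i,k}^2)=k^{n}+O(k^{n-1})$ and $\Tr(A_{i,k}A_{j,k})=O(k^{n-1/2})$ for $i\neq j$, deduces that the $A_{j,k}$ are linearly independent so that $\dim W_k=r+1$ exactly, and only then invokes the variational characterization. Your observation that $\dim W_k\leq r+1$ is automatic and already suffices (a subspace of codimension at most $r+1$ must meet the span of the first $r+2$ eigenvectors, on which the Rayleigh quotient is at most $\nu_{r+1,k}$) removes any dependence of this particular lemma on the induction hypotheses, and in fact shows the inequality holds for the span of any $r+1$ matrices; this is a genuine, if modest, simplification. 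What the paper's longer route buys is not the inequality itself but the near-orthonormality estimates for the $A_{j,k}$ (in particular $\Tr(A_{j,k}^2)=k^n+O(k^{n-1})$), which are of the type reused when Lemma \ref{5} is applied to the $A_{j,k}$ in the proposition that follows; the linear independence as such plays no role in obtaining the stated bound. Both proofs ultimately rest on the same variational principle, so the difference is one of economy rather than of method.
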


 \begin{proof}
By hypothesis (2) of the induction (I), there is a constant $C$ such that $$\vert \Tr(A_{i,k}A_{j,k})- k^n\langle H_{A_{i,k}}H_{A_{j,k}} \rangle_{L^2_\Omega}\vert \leq Ck^{-1}(\Tr(A_{i,k}^2)\Tr(A_{j,k}^2))^{1/2}.$$ Therefore, $$\Tr(A_{i,k}^2)=k^{n}+O(k^{n-1})$$ and $$\Tr(A_{i,k}A_{j,k})=O(k^{n-1/2})\,\,\,\, \textrm{if}\,\,\,\,\, i\neq j,$$ since $H_{A_{i,k}}=\phi+O(k^{-1})$ uniformly by the induction hypotheses. Hence the vectors $A_{i,k}$ are linearly independent (otherwise their inner product would be of a similar order than their norms). Thus $\dim (W_k)=r+1$.  The minimal eigenvalue of $P_k^* P_k$ on $W_k^\perp$ is at least the $(r+2)^{th}$ eigenvalue $\nu_{r+1,k}$. Using the variational characterization of eigenvalues, one gets the required inequality.
 \end{proof}


\begin{prop}
There exists a constant $C$ such that $$\norm{P_{k}B}^2 \geq \Big(\frac{\lambda_{r+1}}{ 4\pi  k^{n+1}}+\frac{C}{k^{n+2}}    \Big) \Tr(B^2),$$  for all $B \in W_k^\perp.$
\end{prop}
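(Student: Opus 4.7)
The plan is to combine Proposition~\ref{claim}, a Rayleigh quotient for $\Delta^E$ on $E_r^\perp$, and Lemma~\ref{5} (which uses $B\in W_k^\perp$) into a self-consistent inequality. By Proposition~\ref{claim},
$$\norm{P_k B}^2 \;\geq\; \frac{1}{4\pi k}\bigl(1+O(k^{-1})\bigr)\norm{\nabla H_B}^2_{L^2}.$$
Since $\lambda_r<\lambda_{r+1}$, the operator $\Delta^E$ preserves $L^2(\End(E)_H)=E_r\oplus E_r^\perp$ and its smallest eigenvalue on $E_r^\perp$ is $\lambda_{r+1}$. Writing $\pi_{E_r}$ for the $L^2$-projection onto $E_r$, integration by parts gives
$$\norm{\nabla H_B}^2_{L^2} = \langle H_B,\Delta^E H_B\rangle_{L^2} \;\geq\; \lambda_{r+1}\bigl(\norm{H_B}^2_{L^2} - \norm{\pi_{E_r}(H_B)}^2_{L^2}\bigr).$$

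The next step is to control $\norm{\pi_{E_r}(H_B)}^2_{L^2}$. Since $\lambda_r<\lambda_{r+1}$ the dimension of $E_r$ equals $r+1$, so I would fix an $L^2$-orthonormal eigenbasis $\phi_0,\dots,\phi_r$ of $E_r$. Induction hypothesis~(3), applied cluster by cluster up to level $r$, provides $P_k^*P_k$-eigenvectors $A_{j,k}\in W_k$ with $\norm{H_{A_{j,k}}-\phi_j}^2_{L^2}=O(k^{-1})$; combining this with induction hypothesis~(2) yields $\Tr(A_{j,k}^2)=k^n+O(k^{n-1})$. Because $B\in W_k^\perp$ we have $\Tr(BA_{j,k})=0$ for every $j$, so Lemma~\ref{5} applies and gives $|\langle H_B,\phi_j\rangle_{L^2}|^2\leq Ck^{-n-1}\Tr(B^2)$. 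Summing over the $r+1$ indices yields $\norm{\pi_{E_r}(H_B)}^2_{L^2}\leq C'\,k^{-n-1}\Tr(B^2)$.

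To close the argument I would express $\norm{H_B}^2_{L^2}$ in terms of $\Tr(B^2)$ and $\norm{P_k B}^2$ using the integrated Lemma~\ref{lemm0}(1) combined with Lemma~\ref{1},
$$\norm{H_B}^2_{L^2} + \norm{P_k B}^2 = \Tr(B^2\bar{\mu}) = k^{-n}\Tr(B^2)\bigl(1+O(k^{-1})\bigr).$$
Setting $\alpha := k^n\norm{P_k B}^2/\Tr(B^2)$, the previous inequalities collapse into
$$\alpha \;\geq\; \frac{\lambda_{r+1}}{4\pi k}(1-\alpha)+O(k^{-2}),$$
which one solves for $\alpha$ to obtain $\alpha\geq \tfrac{\lambda_{r+1}}{4\pi k}+O(k^{-2})$, exactly the announced bound. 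The main obstacle is precisely this last self-consistency step: Lemma~\ref{2} only provides an upper bound on $\norm{H_B}^2_{L^2}$, whereas the Rayleigh estimate demands a lower bound; one must therefore exploit the exact identity $\norm{H_B}^2_{L^2}=\Tr(B^2\bar{\mu})-\norm{P_k B}^2$ and track the dependence on $\norm{P_k B}^2$ carefully so as not to lose more than $O(k^{-n-2})\Tr(B^2)$ when closing the loop. Uniformity in $h$ is inherited from the corresponding uniformity already stated in Lemmas~\ref{0}, \ref{1}, \ref{5} and Proposition~\ref{claim}.
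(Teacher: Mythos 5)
Your proposal is correct and follows essentially the same route as the paper: Proposition \ref{claim} for $\Vert\nabla H_B\Vert^2_{L^2}$, Lemma \ref{5} applied to the induction-supplied eigenvectors $A_{j,k}$ (using $B\in W_k^\perp$) to kill the $E_r$-component, the variational characterization of $\lambda_{r+1}$ on $E_r^\perp$, and the identity $\Vert H_B\Vert^2_{L^2}+\Vert P_kB\Vert^2=\Tr(B^2\bar{\mu})$ combined with Lemma \ref{1}. The only difference is cosmetic: you close the argument as a self-consistent inequality in $\alpha=k^n\Vert P_kB\Vert^2/\Tr(B^2)$, while the paper substitutes its upper bound for $\Vert H_B\Vert^2_{L^2}$ into the same trace identity; the algebra is equivalent.
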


\begin{proof}

\textbf{Step I:} We have $$\norm{H_B}_{L^2}^2+\norm{\xi_{B}}_{L^2}^2=\Tr(B^2\bar{\mu}).$$ This  together with Lemma \ref{1}, imply  that \begin{equation}\label{eq1R}\norm{H_B}_{L^2}^2+\norm{P_{k}B}_{L^2}^2\geq \frac{1}{k^n}(1+O(k^{-1}))\Tr(B^2),
\end{equation} 
since $\norm{P_{k}B}_{L^2}^2=\norm{\xi_{B}}_{L^2}^2.$

\textbf{Step II:} Let  $\phi_{0},\dots , \phi_{r}$ be an $L^2$-orthonormal basis for $E_{r}$ such that $\Delta^E \phi_{i}=\lambda_{i} \phi_{i}$ and let $$H_B= \sum_{j=0}^{r} \inner{H_B,\phi_{j}}_{L^2}\phi_{j}+\widetilde{H},$$ where $\widetilde{H}$ is orthogonal to $E_{r}.$ Applying Lemma \ref{5} to $A_{j,k},$ there exists a constant $C$ such that 
$$\abs{\inner{H_B, \phi_{j}}_{L^2}}^2 \leq Ck^{-n-1}\Tr(B^2),$$ for all $B \in W_k^\perp$ and $0\leq j \leq r.$ Therefore,
$$\norm{H_B}_{L^2}^2=\sum_{j=0}^{r}\abs{\inner{H_B, \phi_{j}}_{L^2}}^2 +\norm{\widetilde{H}}_{L^2}^2 \leq Ck^{-n-1}\Tr(B^2)+\norm{\widetilde{H}}_{L^2}^2,$$ for all $B \in W_k^\perp$. By definition, we have then
$$\lambda_{r+1} =\min_{\phi \in E_r^\perp} \frac{\Vert \nabla \phi\Vert^2_{L^2} }{\Vert \phi\Vert^2_{L^2}}\leq \frac{\Vert \nabla \widetilde{H}\Vert^2_{L^2} }{\Vert \widetilde{H}\Vert^2_{L^2}}.$$ Therefore,
$$\norm{\widetilde{H}}^2_{L^2} \leq \frac{1}{\lambda_{r+1}}\Vert \nabla \widetilde{H}\Vert^2_{L^2}.$$
On the other hand, \begin{align*}\Vert \nabla H_B\Vert^2_{L^2}=&\Vert \nabla \widetilde{H}\Vert^2_{L^2}+\Vert \nabla( H_B-\widetilde{H})\Vert^2_{L^2}\\
&+2  Re \inner{\nabla \widetilde{H},\nabla (H_B-\widetilde{H})}_{L^2}\cse\\=&\Vert \nabla \widetilde{H}\Vert^2_{L^2}+\Vert \nabla( H_B-\widetilde{H})\Vert^2_{L^2}.\end{align*} Actually we use here the fact that \begin{align*}
\inner{\nabla \widetilde{H},\nabla (H_B-\widetilde{H})}_{L^2}&=\inner{ \widetilde{H},\Delta^E (H_B-\widetilde{H})}_{L^2}
\\&= \sum_{j=0}^{r}\lambda_{j} \overline{\inner{H_B,\phi_{j}}_{L^2}}\inner{ \widetilde{H},\phi_{j}}_{L^2}\\&=0.
\end{align*}  

Hence, 
$$\norm{\widetilde{H}}^2_{L^2} \leq \frac{1}{\lambda_{r+1}}\Vert \nabla H_B\Vert^2_{L^2}.$$

\textbf{Step III:} Proposition \ref{claim}  implies that 

\begin{align*}  \norm{H_B}_{L^2}^2 &\leq Ck^{-n-1}\Tr(B^2)+\norm{\widetilde{H}}_{L^2}^2 \\&\leq \frac{1}{\lambda_{r+1}}\Vert \nabla H_B\Vert^2_{L^2}+Ck^{-n-1}\Tr(B^2) \\&\leq \frac{ 4\pi k }{\lambda_{r+1}}\norm{P_{k}B}^2+O(1)\norm{P_{k}B}^2+ Ck^{-n-1}\Tr(B^2).  
\end{align*}
 This together with \eqref{eq1R} conclude the proof.
\end{proof}

\begin{cor}\label{upperbound}
 Assume that $\lambda_r<\lambda_{r+1}$ and that the inductive hypothesis at level $r$ holds. Then one has the lower bound, 
 $$
 \nu_{r+1,k}\geq \frac{\lambda_{r+1}}{ 4 \pi k^{n+1}}+O(k^{-n-2}).
 $$
\end{cor}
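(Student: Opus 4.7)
The plan is straightforward given the machinery already developed: the Corollary follows by combining the variational characterization from Lemma \ref{3} with the uniform lower bound on $\Vert P_k B\Vert^2$ established in the Proposition directly above. Concretely, Lemma \ref{3} produces an $(r+1)$-dimensional test subspace $W_k \subset \sqrt{-1}\Lie{u}(N_k)$, spanned by the approximate eigenvectors $A_{j,k}$ supplied by the inductive hypothesis, and asserts the min-max inequality
$$
\nu_{r+1,k} \geq \min_{B \in W_k^\perp}\frac{\Vert P_k B\Vert^2}{\tr(B^2)}.
$$
I would then substitute the pointwise lower bound $\Vert P_k B\Vert^2 \geq \bigl(\tfrac{\lambda_{r+1}}{4\pi k^{n+1}} + O(k^{-n-2})\bigr)\tr(B^2)$, valid for $B \in W_k^\perp$, and pass to the infimum. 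The asymptotic lower bound for $\nu_{r+1,k}$ drops out immediately.

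The real substance lies in the Proposition just above. My plan for that result would be to decompose $H_B = \sum_{j\leq r}\langle H_B,\phi_j\rangle_{L^2}\,\phi_j + \tilde H$ into its low-frequency part (supported on $E_r$) and its complement $\tilde H \in E_r^\perp$. Using Lemma \ref{5} applied to the approximate eigenvectors $A_{j,k}$ (legitimate because $B \perp W_k$), each low-frequency projection satisfies $\vert\langle H_B,\phi_j\rangle_{L^2}\vert^2 = O(k^{-n-1})\tr(B^2)$, so the whole low-frequency component contributes only a negligible error. The spectral gap $\lambda_{r+1}$ on $E_r^\perp$ then yields $\lambda_{r+1}\Vert \tilde H\Vert_{L^2}^2 \leq \Vert \nabla \tilde H\Vert_{L^2}^2 \leq \Vert \nabla H_B\Vert_{L^2}^2$, where the last inequality uses that $\nabla$ is self-adjoint and $\tilde H$ is the orthogonal projection of $H_B$ onto $E_r^\perp$.

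To convert the Dirichlet energy into a statement about $P_k$, the crucial input is Proposition \ref{claim}, giving $\Vert \nabla H_B\Vert_{L^2}^2 \leq (4\pi k + O(1))\Vert P_k B\Vert^2$; this is the hard step, relying on the second fundamental form analysis of the Grassmannian embedding and the curvature computation $-F_k^*\wedge F_k = 2\pi k\sqrt{-1}\omega\otimes\mathrm{Id} + O(1)$. Finally the moment-map identity $\Vert H_B\Vert_{L^2}^2 + \Vert P_k B\Vert^2 = \tr(B^2\bar\mu)$, combined with Lemma \ref{1} which provides $\tr(B^2\bar\mu) = k^{-n}(1 + O(k^{-1}))\tr(B^2)$, closes the loop and yields the required lower bound on $\Vert P_k B\Vert^2$. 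The main obstacle in the whole argument is thus Proposition \ref{claim}; the Corollary itself is a routine assembly once that estimate and the variational lemma are in hand.
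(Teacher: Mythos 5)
Your proposal is correct and follows the paper's own route exactly: the Corollary is the two-line assembly of the min-max inequality from Lemma \ref{3} with the lower bound $\Vert P_k B\Vert^2 \geq \bigl(\tfrac{\lambda_{r+1}}{4\pi k^{n+1}} + O(k^{-n-2})\bigr)\tr(B^2)$ on $W_k^\perp$ from the preceding Proposition. Your recap of how that Proposition is proved (low-frequency projections controlled by Lemma \ref{5}, spectral gap on $E_r^\perp$ with vanishing Dirichlet cross term, Proposition \ref{claim}, and the moment-map identity plus Lemma \ref{1}) also matches the paper's Steps I--III.
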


\begin{proof}
We have from the previous Proposition $$\nu_{r+1,k} \geq \min_{B \in W_k^\perp} \frac{\Vert P_k B\Vert^2}{\tr(B^2)} \geq \frac{\lambda_{r+1}}{4 \pi k^{n+1}}+O(k^{-n-2}).$$

\end{proof}

\subsection{Completing the proof of the induction (I), steps 2 and 3}
In this subsection, we fix positive integers $r$ and $s$ such that $\lambda_r <\lambda_{r+1}= .. =\lambda_s <\lambda_{s+1}$. For any $A \in \sqrt{-1}\Lie{u}(N),$  we write \begin{equation}\label{split} {H_{A}}={H_{A}}^< + {H_{A}}^{r+1}+ {H_{A}}^>\end{equation}
where ${H_{A}}^<$ is the component of ${H_{A}}$ lying in $E_r$, ${H_{A}}^>$ lies in the span of the eigenspaces associated to eigenvalues strictly greater than $\lambda_{r+1}$ and ${H_{A}}^{r+1}$ is the component of ${H_{A}}$ in the span of the eigenspaces having eigenvalue $\lambda_{r+1}$.

\begin{prop}\label{prop1}
 If the $r^{\text{th}}$ inductive hypotheses hold, then there is a constant $C$ such that for all
 $A,B\in F_{s,k}$, 
$$\vert \Tr(AB)-  k^{n} \langle {H_{A}},H_B\rangle_{L^2}\vert \leq Ck^{-n-1}\tr(A^2)^{1/2}\tr(B^2)^{1/2}.$$
\end{prop}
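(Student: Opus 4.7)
The plan is to reduce the claim, by a short algebraic manipulation, to the algebraic identities of Lemmas \ref{lemm0} and \ref{1} combined with an eigenvalue bound for $P_k^*P_k$ on $F_{s,k}$ coming from the preceding inductive steps.

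First, I would combine the two parts of Lemma \ref{lemm0} (or equivalently, use Lemma \ref{lemm0}(2) together with the fact, proved just before Lemma \ref{lemm0}, that $\Tr(B\,d\bar{\mu}(A))=\langle \xi_A,\xi_B\rangle_{L^2}=\langle P_k A, P_k B\rangle$) to obtain the identity
$$
\tr(AB\bar{\mu}) \;=\; \langle H_A,H_B\rangle_{L^2} \,+\, \langle P_k A, P_k B\rangle.
$$
Lemma \ref{1} then says $|\tr(AB\bar{\mu})-k^{-n}\tr(AB)|\leq Ck^{-n-1}\tr(A^2)^{1/2}\tr(B^2)^{1/2}$. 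Multiplying by $k^n$ and rearranging yields
$$
\bigl|\tr(AB)-k^n\langle H_A,H_B\rangle_{L^2}\bigr|\;\leq\; k^n\,|\langle P_k A,P_k B\rangle|\,+\,Ck^{-1}\tr(A^2)^{1/2}\tr(B^2)^{1/2}.
$$
Hence everything reduces to controlling $k^n|\langle P_kA,P_kB\rangle|$ when $A,B\in F_{s,k}$.

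Second, for this I would use the spectral decomposition of $P_k^*P_k$. Writing $B=\sum_{j\leq s}B_j$ with $B_j$ in the $\nu_{j,k}$-eigenspace (orthogonal with respect to the trace pairing since $P_k^*P_k$ is self-adjoint), one has
$$
\|P_k B\|^2=\langle B,P_k^*P_k B\rangle=\sum_{j\leq s}\nu_{j,k}\tr(B_j^2)\;\leq\;\nu_{s,k}\,\tr(B^2),
$$
and similarly for $A$, so Cauchy--Schwarz gives $|\langle P_kA,P_kB\rangle|\leq \nu_{s,k}\,\tr(A^2)^{1/2}\tr(B^2)^{1/2}$.

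Third, I need the eigenvalue bound $\nu_{s,k}=O(k^{-n-1})$. By the hypothesis of the induction step one has $\lambda_{r+1}=\cdots=\lambda_s$, and step (1) of the inductive hypotheses at level $r$ handles all $\nu_{j,k}$ for $j\leq r$. For the indices $j=r+1,\dots,s$ one uses Lemma \ref{easybound} (applied iteratively in exactly the form stated there, since the cluster of eigenvalues of $\Delta^E$ is constant on this range) to conclude $\nu_{j,k}\leq \lambda_s/(4\pi k^{n+1})+O(k^{-n-2})$. Inserting this into the previous display gives $k^n|\langle P_kA,P_kB\rangle|\leq Ck^{-1}\tr(A^2)^{1/2}\tr(B^2)^{1/2}$, which combined with the first step completes the estimate.

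The only delicate point is the third step, namely making sure the eigenvalue bound covers the whole cluster $j=r+1,\dots,s$ rather than just $j=r+1$; this is obtained by a direct iteration of the proof of Lemma \ref{easybound}, replacing $E_{r+1}$ by $E_{j}$ at each step and using that the projections $\pi_k$ still fail to be injective on the appropriate higher-dimensional subspace. Everything else is algebraic cancellation.
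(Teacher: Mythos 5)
Your argument is correct and essentially reproduces the paper's own proof: the identity $\Tr(AB\bar{\mu})=\Tr(AP_k^*P_kB)+\langle H_A,H_B\rangle_{L^2}$ from Lemma \ref{lemm0}, the estimate of Lemma \ref{1}, and the bound $\nu_{s,k}=O(k^{-n-1})$ on $F_{s,k}$ (which the paper invokes directly, relying on Lemma \ref{easybound} for the whole cluster $j=r+1,\dots,s$, the iteration you spell out). Note that what both arguments actually deliver is the bound $Ck^{-1}\Tr(A^2)^{1/2}\Tr(B^2)^{1/2}$, i.e.\ the form appearing in inductive hypothesis (2); the factor $k^{-n-1}$ in the displayed statement of Proposition \ref{prop1} is a misprint.
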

\begin{proof}
 From Lemma \ref{1}, we know that there is a uniform constant $c>0$ such that
$$\vert \Tr(AB\bar{\mu})- \frac{1}{ k^n} \Tr(AB)\vert \leq ck^{-n-1} \Tr(A^2)^{1/2}\Tr(B^2)^{1/2}.$$
On the other hand, Lemma \ref{lemm0} implies that  $\Tr(AB\bar{\mu})=\Tr(AP_k^*P_k B)+\inner{ {H_{A}},H_B}_{L^2}$. Moreover, using the facts that $A$ and $B$ lie in $F_{s,k}$ and $\nu_{s,k}=O(k^{-n-1})$ we see that 
$$\abs{\Tr(AP_k^*P_k B)} \leq \frac{C}{k^{n+1}}\norm{A}\norm{B}.$$ Putting these estimates together concludes the proof.
\end{proof}
Next, we prove that the step 3 of the induction holds. We start with the following lemma.

\begin{lem}\label{6}
Assume that the $r^{\text{th}}$ inductive hypotheses hold. There exists $C$ such that for any $A \in F_{r+1,s,k},$ we have

\begin{align*}\Vert {H_{A}}^<\Vert_{L^2}^2 & \leq Ck^{-n-1}\Tr(A^2),\\ 
\Vert {H_{A}}^>\Vert_{L^2}^2 & \leq Ck^{-n-1}\Tr(A^2),\\
 \Big\vert k^n \Vert {H_{A}}^{r+1} \Vert_{L^2}^2 - \Tr(A^2) \Big\vert &\leq Ck^{-1} \Tr(A^2).
\end{align*}

\end{lem}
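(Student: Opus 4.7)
The plan is to prove the three estimates in turn, exploiting throughout that $A\in F_{r+1,s,k}$ is $L^2$-orthogonal to the $P_k^*P_k$-invariant subspace $F_{r,k}$.

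First, to bound $\Vert H_A^<\Vert_{L^2}^2$, I would expand $H_A^<=\sum_{j=0}^{r}\langle H_A,\phi_j\rangle_{L^2}\phi_j$ in an $L^2$-orthonormal basis of $E_r$ made of $\Delta^E$-eigenvectors. For each $j$, the induction hypothesis (3) (with $j=0$ handled by the base case $A_{0,k}=\mathrm{Id}_{N_k}$) supplies an approximant $A_{j,k}\in F_{r,k}$ satisfying $\Vert H_{A_{j,k}}-\phi_j\Vert_{L^2}^2=O(k^{-1})$ and, via hypothesis (2), $\Tr(A_{j,k}^2)=k^n+O(k^{n-1/2})$. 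Although $A_{j,k}$ need not itself be a $P_k^*P_k$-eigenvector, the argument of Lemma \ref{5} still applies: its only use of the eigenvector property is to produce the vanishing $\langle B,P_k^*P_k M_k\rangle=0$, which here follows from $A\perp F_{r,k}$ combined with the $P_k^*P_k$-invariance of $F_{r,k}$. This gives $|\langle H_A,\phi_j\rangle_{L^2}|^2\leq Ck^{-n-1}\Tr(A^2)$ for each $j$, and summing over $j=0,\ldots,r$ yields the first inequality.

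Next, to bound $\Vert H_A^>\Vert_{L^2}^2$, I would use the spectral identity
$$
\Vert\nabla H_A\Vert_{L^2}^2-\lambda_{r+1}\Vert H_A\Vert_{L^2}^2=\sum_{j\geq s+1}(\lambda_j-\lambda_{r+1})|\langle H_A,\phi_j\rangle|^2-\sum_{j\leq r}(\lambda_{r+1}-\lambda_j)|\langle H_A,\phi_j\rangle|^2,
$$
which together with $\lambda_{s+1}>\lambda_{r+1}$ implies
$$
(\lambda_{s+1}-\lambda_{r+1})\Vert H_A^>\Vert_{L^2}^2\leq \Vert\nabla H_A\Vert_{L^2}^2-\lambda_{r+1}\Vert H_A\Vert_{L^2}^2+\lambda_{r+1}\Vert H_A^<\Vert_{L^2}^2.
$$
Proposition \ref{claim} combined with $\Vert P_kA\Vert^2\leq\nu_{s,k}\Tr(A^2)$ and hypothesis (1), which gives $\nu_{s,k}=\lambda_{r+1}/(4\pi k^{n+1})+O(k^{-n-2})$ (since $\lambda_s=\lambda_{r+1}$), yields $\Vert\nabla H_A\Vert^2\leq \lambda_{r+1}k^{-n}\Tr(A^2)+O(k^{-n-1})\Tr(A^2)$. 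Proposition \ref{prop1} gives $\lambda_{r+1}\Vert H_A\Vert^2=\lambda_{r+1}k^{-n}\Tr(A^2)+O(k^{-n-1})\Tr(A^2)$, so the two leading $\lambda_{r+1}/k^n$ contributions cancel. Combined with Step 1 and the fact that $\lambda_{s+1}-\lambda_{r+1}>0$ is a fixed positive constant, this forces the second inequality.

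Finally, the Pythagorean identity $\Vert H_A\Vert_{L^2}^2=\Vert H_A^<\Vert^2+\Vert H_A^{r+1}\Vert^2+\Vert H_A^>\Vert^2$ combined with the first two estimates and with Proposition \ref{prop1} applied to $B=A$ gives
$$
|k^n\Vert H_A^{r+1}\Vert^2-\Tr(A^2)|\leq |k^n\Vert H_A\Vert^2-\Tr(A^2)|+k^n(\Vert H_A^<\Vert^2+\Vert H_A^>\Vert^2)\leq Ck^{-1}\Tr(A^2),
$$
which is the third inequality. The main obstacle is Step 2: the exact cancellation of the leading $\lambda_{r+1}k^{-n}\Tr(A^2)$ terms is what produces the sharp $O(k^{-n-1})$ bound, and this cancellation requires both the sharp asymptotic for $\nu_{s,k}$ from the induction and the sharp constant $4\pi k$ in Proposition \ref{claim}; the remaining steps are essentially bookkeeping once these tools are in place.
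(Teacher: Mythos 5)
Your proof is correct and follows essentially the same route as the paper: Lemma \ref{5} applied to the approximants $A_{j,k}$ (legitimate, as you note, because $A\perp F_{r,k}$ and $F_{r,k}$ is $P_k^*P_k$-invariant) controls ${H_A}^<$, while Proposition \ref{claim}, the bound $\nu_{s,k}\le \lambda_{r+1}/(4\pi k^{n+1})+O(k^{-n-2})$ and Proposition \ref{prop1} produce exactly the leading-order cancellation the paper exploits through its relations \eqref{rel1b}--\eqref{rel2b}, with the gap $\lambda_{s+1}>\lambda_{r+1}$ absorbing the remainder. The only differences are cosmetic: you treat a general $A\in F_{r+1,s,k}$ via $\Vert P_kA\Vert^2\le\nu_{s,k}\Tr(A^2)$ instead of the paper's reduction to eigenvectors of $P_k^*P_k$ (a harmless, arguably cleaner variant), and the bound on $\nu_{s,k}$ should be credited to Lemma \ref{easybound} together with Corollary \ref{upperbound} rather than to inductive hypothesis (1), which only covers $j\le r$.
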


\begin{proof}
Without loss of generality, we may assume that $A \in F_{r+1,s,k}$ is a $\nu_{j,k}$ eigenvector of $P_k^*P_k$ with $r+1\leq j\leq s.$ Let $\phi_{0}, \dots, \phi_{r}$ be an orthonormal basis for $E_{r}$ such that $\Delta^E \phi_{j}=\lambda_{j}\phi_{j}$. 
Now, by the induction hypotheses, there are eigenvectors $A_{j,k}$ with eigenvalues $\nu_{j,k}\leq \nu_{r,k}$ for $P_k^*P_k$ satisfying $$\Tr(A_{j,k}^{ 2})=k^n +O(k^{n-1}),\,\,\,\, \,\,\, \Vert H_{A_{j,k}}-\phi_j\Vert_{L^2}=O(k^{-1/2}).$$ Since $A \perp A_{j,k}, 0\leq j\leq r,$ Lemma \ref{5} implies that $$\vert\langle {H_{A}},\phi_j\rangle_{L^2}\vert^2\leq Ck^{-n-1}\Tr(A^2).$$
Thus $\Vert {H_{A}}^<\Vert_{L^2}^2\leq Ck^{-n-1}\Tr(A^2).$\\
Moreover, Proposition \ref{prop1} implies that $$\Vert {H_{A}}^<\Vert^2_{L^2} + \Vert {H_{A}}^{r+1} \Vert^2_{L^2} + \Vert {H_{A}}^>\Vert^2_{L^2}=\frac{1}{k^n}(1+O(1/k))\tr(A^2)$$ and hence
\begin{align}\label{rel1b}
 \Vert {H_{A}}^{r+1}\Vert_{L^2}^2+ \Vert {H_{A}}^>\Vert_{L^2}^2 =\frac{1}{k^n}\left(1+O\left(\frac{1}{k}\right)\right) \Tr(A^2).
\end{align}
On the other hand, for any $A \in F_{r+1,s,k},$ an eigenvector associated to the eigenvalue $\nu_{j,k}$ ($r+1\leq j \leq s$), we have \begin{align}
\Vert \nabla {H_{A}}\Vert^2_{L^2}=&4\pi(k+O(1))\norm{P_{k}A}^2=4\pi(k+O(1))\nu_{j,k}\Tr(A^2) \nonumber \\  
=& \frac{(\lambda_{j}+O(k^{-1}))}{k^{n}}\Tr(A^2),                                           \label{rel1a}
                                          \end{align}
 since $\displaystyle\nu_{j,k}=\frac{\lambda_{j}}{ 4 \pi k^{n+1}}+O(k^{-n-2}).$
Using the splitting \eqref{split} and the fact that ${H_A}^{r+1}$ lies in the $\lambda_{r+1}$ eigenspace, we obtain from \eqref{rel1a},
\begin{align}\Vert\nabla {H_A}^< \Vert^2_{L^2} + \lambda_{r+1}\Vert {H_A}^{r+1} \Vert^2_{L^2} + \Vert \nabla {H_A}^> \Vert^2_{L^2} =\frac{(\lambda_{r+1}+O(k^{-1}))}{ 4 \pi k^{n}}\Tr(A^2).\label{rel2a}
\end{align}

The variational property for eigenvalues of $\Delta^E$ implies that 
$\displaystyle \lambda_{s+1}=\min_{\phi \in E_s^\perp} \frac{\Vert \nabla \phi\Vert^2_{L^2} }{\Vert \phi\Vert^2_{L^2}}.$ Therefore, 
$$\lambda_{s+1}\leq  \frac{\Vert \nabla {H_{A}}^>\Vert^2_{L^2} }{\Vert {H_{A}}^>\Vert^2_{L^2}},$$ since ${H_{A}}^> \in E_{s}^{\perp}.$ 
Thus, using the fact that $\Vert \nabla {H_{A}}^<\Vert_{L^2}^2\leq \lambda_r \Vert {H_{A}}^< \Vert^2_{L^2} \leq Ck^{-n-1}\tr(A^2)$, we obtain thanks to \eqref{rel2a},
\begin{align} \label{rel2b}
\lambda_{r+1}\Vert {H_{A}}^{r+1}\Vert_{L^2}^2+ \lambda_{s+1}\Vert {H_{A}}^>\Vert_{L^2}^2 &\leq &\frac{1}{k^n}\left(\lambda_{r+1}+O\left(\frac{1}{k}\right)\right) \Tr(A^2).
\end{align}
Since $\lambda_{s+1}>\lambda_{r+1}$, the system \eqref{rel1b}, \eqref{rel2b} ensures the existence of a constant $C>0$ such that 
\begin{align*}\Vert {H_{A}}^>\Vert_{L^2}^2&\leq Ck^{-n-1}\Tr(A^2),
\\ \Big\vert  k^n \Vert {H_{A}}^{r+1} \Vert_{L^2}^2 - \Tr(A^2) \Big\vert &\leq Ck^{-1} \Tr(A^2).
\end{align*}
The Lemma is proved.
\end{proof}

With this last proposition below, we obtain the induction at step $r+1$.
\begin{prop}\label{step3}
 Assume that $\lambda_r <\lambda_{r+1}= .. =\lambda_s <\lambda_{s+1}$ and the $r^{\text{th}}$ inductive hypotheses hold. Given $\phi \in \Ker(\Delta^E-\lambda_{r+1}Id)$ an eigenvector, let $A_{\phi,k}$ be the point of $F_{r+1,s}$ for which $H_{A_{\phi,k}}$ is nearest to $\phi \in L^2$. Then, 
$$\Vert H_{A_{\phi,k}}-\phi\Vert^2_{L^2}=O(k^{-1})$$
and this estimate is uniform in $\phi$ if in addition we require $\Vert \phi \Vert_{L^2}=1$.
\end{prop}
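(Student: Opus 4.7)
The strategy is to exhibit an explicit trial vector $\tilde A_k \in F_{r+1,s,k}$ whose image $H_{\tilde A_k}$ approximates $\phi$ with $L^2$-error of order $k^{-1/2}$, and then invoke the defining minimality of $A_{\phi,k}$.

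The key observation, already encoded in Lemma \ref{6}, is that for any $A \in F_{r+1,s,k}$, the component $H_A^{r+1}$ of $H_A$ along $\ker(\Delta^E - \lambda_{r+1}\Id)$ dominates in the quantitative sense
$$
\|H_A^<\|_{L^2}^2 + \|H_A^>\|_{L^2}^2 \leq Ck^{-n-1}\Tr(A^2), \qquad \|H_A^{r+1}\|_{L^2}^2 = k^{-n}\Tr(A^2)\bigl(1 + O(k^{-1})\bigr).
$$
Thus the linear map $\Psi_k : F_{r+1,s,k} \longrightarrow \ker(\Delta^E - \lambda_{r+1}\Id)$, $A\mapsto H_A^{r+1}$, is a near-isometry after rescaling and in particular is injective for $k$ large. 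Since $\dim \ker(\Delta^E - \lambda_{r+1}\Id) = s-r$ by the hypothesis $\lambda_r < \lambda_{r+1} = \cdots = \lambda_s < \lambda_{s+1}$, injectivity immediately forces $\dim F_{r+1,s,k} \leq s-r$. Combined with the trivial inequality $\dim F_{r+1,s,k} \geq s-r$ coming from the definition of $F_{r+1,s,k}$ as the span of $s-r$ eigenspaces, both dimensions equal $s-r$ and $\Psi_k$ is a bijection for $k$ large.

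Given $\phi$ with $\|\phi\|_{L^2}=1$, set $\tilde A_k := \Psi_k^{-1}(\phi)$. The near-isometry property forces $\Tr(\tilde A_k^2) = k^n(1 + O(k^{-1}))$, so that by Lemma \ref{6} again
$$
\|H_{\tilde A_k} - \phi\|_{L^2}^2
= \|H_{\tilde A_k}^<\|_{L^2}^2 + \|H_{\tilde A_k}^>\|_{L^2}^2
\leq 2Ck^{-n-1}\Tr(\tilde A_k^2) = O(k^{-1}).
$$
Since $A_{\phi,k}$ minimizes $\|H_A - \phi\|_{L^2}^2$ over $A \in F_{r+1,s,k}$, one concludes $\|H_{A_{\phi,k}} - \phi\|_{L^2}^2 \leq \|H_{\tilde A_k} - \phi\|_{L^2}^2 = O(k^{-1})$. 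Uniformity in $\phi$ on the unit $L^2$-sphere of $\ker(\Delta^E-\lambda_{r+1}\Id)$ follows directly, because the constants coming from Lemma \ref{6} are uniform in $A$ and the inverse of the near-isometry $\Psi_k$ has operator norm $\leq k^{n/2}(1+O(k^{-1/2}))$ independently of $\phi$.

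The main obstacle is less computational than structural: one must pass from injectivity of $\Psi_k$ (which is essentially immediate) to surjectivity. A priori, an accidental coincidence $\nu_{s,k} = \nu_{s+1,k}$ at a finite stage would inflate $\dim F_{r+1,s,k}$ beyond $s-r$, potentially breaking the bijection and the whole trial-vector strategy. The fact that injectivity of $\Psi_k$ on $F_{r+1,s,k}$ itself rules this out --- using that the codomain has dimension exactly $s-r$ --- is what closes the proof and, as a byproduct, confirms the eigenvalue gap $\nu_{s,k} < \nu_{s+1,k}$ for large $k$, which is needed to carry the induction further.
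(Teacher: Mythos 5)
Your argument is correct and follows essentially the same route as the paper: injectivity of $A\mapsto H_A^{r+1}$ from Lemma \ref{6}, the dimension count $\dim F_{r+1,s,k}\geq s-r=\dim\Ker(\Delta^E-\lambda_{r+1}\Id)$ to upgrade to an isomorphism, pulling $\phi$ back, and applying Lemma \ref{6} once more. The only difference is cosmetic: you make explicit the final minimality comparison with the true nearest point $A_{\phi,k}$, which the paper leaves implicit by working directly with the preimage.
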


\begin{proof}
 First we show that the linear map $$A \in F_{r+1,s,k}  \to {H_{A}}^{r+1} \in V_{r+1}$$ is an isomorphism for $k \gg 0$,where $V_{r+1}$ is the eigenspace of $\Delta^E$ associated to the eigenvalue $\lambda_{r+1}.$ Suppose that  $A \in F_{r+1,s,k}$ and ${H_{A}}^{r+1}=0$. Then applying Lemma \ref{6}, we have
 $$\Big\vert  \Tr(A^2) \Big\vert \leq Ck^{-1} \Tr(A^2).$$ This implies that $A=0$ if $k \gg 0$. Note that 
 $\dim F_{r+1,s,k} \geq s-r=\dim V_{r+1}.$ Therefore, the linear map is an isomorphism. 
 This implies that for any $\phi \in V_{r+1},$ there exists a unique $A_{\phi,k}$ such that ${H_{A_{\phi,k}}}^{r+1}=\phi.$ Applying Lemma \ref{6}, we have
 $$\Vert H_{A_{\phi,k}}-\phi\Vert^2_{L^2}=\Vert {H_{A_{\phi,k}}}^< + {H_{A_{\phi,k}}}^>\Vert^2_{L^2}=O(k^{-1}).$$

\end{proof}

\section{Applications and Generalizations}
\subsection{Laplacian for balanced metrics}  
Consider a convergent sequence of balanced metrics in the sense of Wang on $E$, a simple holomorphic vector bundle. Then we know that $E$ is Mumford stable and conversely if $E$ is Mumford stable we know the existence of balanced metrics from \cites{W1,W2}. By uniformity in the previous asymptotics results, we get

\begin{thm} \label{bal1thm}Assume $E$ is Mumford stable.  For all large $k$, write $h_k\in Met(E)$ the balanced metric at level $k$. We consider the operators $Q_{k,.}$ and $P_k$ with respect to the balanced metric $h_k$. Let $\tilde{h}_{HE}$ be the almost Hermitian-Einstein metric on $E$ satisfying
$$\frac{\sqrt{-1}}{2\pi} \Lambda_\omega F_{(E,\tilde{h}_{HE})} = \left(\mu(E) + \frac{\bar s}{2} - \frac{S(\omega)}{2}\right)Id_E$$
where $\bar s$ is the average scalar curvature $S(\omega)$ of $\omega$. Then for any $\phi\in C^{\infty}(X,End(E))$, Hermitian with respect to $\tilde{h}_{HE}$, one has
\begin{equation}\label{bal1}\tr(Q_{k,\phi}P_k^*P_kQ_{k,\phi})\rightarrow \frac{1}{4\pi k}\int_X\tr( \phi\Delta^{E,\tilde{h}_{HE}}\phi) \Omega
 \end{equation}
where the Laplacian $\Delta^{E,\tilde{h}_{HE}}$ is computed with respect to $\tilde{h}_{HE}$. The result still holds if $\phi$ varies in a bounded set of Hermitian endomorphisms in the $C^{\infty}$-topology. \\
Furthermore, one has convergence of the eigenvalues $\nu_{j,k}$ of the operator $P_k^*P_k$ towards the  eigenvalues of $\Delta^{E,\tilde{h}_{HE}}$ after renormalization, i.e
$$4\pi k^{n+1}\nu_{j,k}\rightarrow {\lambda_j}.$$
Fix an integer $r>0$. There is a constant $C>0$ such that for all $A,B\in F_{r,k}$,
$$\Big\vert \Tr(AB)-k^n \langle {H_{A}},H_B\rangle_{L^2_\Omega} \Big\vert \leq Ck^{-1}\Tr(A^2)^{1/2}\Tr(B^2)^{1/2}.$$
Moreover, let us fix integers $0<p<q$ such that $$\lambda_{p-1}<\lambda_p = \lambda_{p+1} = ... = \lambda_q <\lambda_{q+1}.$$  Given $\phi \in \Ker(\Delta^{E,\tilde{h}_{HE}}- \lambda_p Id)$, let $A_{\phi,k}$ denote the point $F_{p,q,k}$ with $H_{A_{\phi,k}}$ nearest to $\phi$ as measured in the $L^2$-norm. Then 
$$\Vert H_{A_{\phi,k}}-\phi\Vert^2_{L^2_\Omega}= O(k^{-1})$$
and this estimate is uniform in $\phi$ if we require that $\Vert \phi\Vert_{L^2_\Omega}=1$.
\end{thm}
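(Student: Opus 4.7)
The plan is to leverage the uniformity clauses built into Theorems \ref{thm1}, \ref{thm3} and \ref{thm4} so as to handle a sequence of base metrics rather than a single fixed one. The additional input I need is the convergence result of X. Wang \cites{W1,W2}: under the Mumford stability hypothesis, balanced metrics $h_k$ exist for all $k \gg 0$ and, after the natural normalization, converge in the $C^\infty$ topology to the almost Hermitian-Einstein metric $\tilde{h}_{HE}$ specified in the statement. I will treat this as a black box. Its immediate consequence is that the family $\{h_k\}_{k \gg 0} \cup \{\tilde{h}_{HE}\}$ is a compact set of uniformly equivalent metrics in $C^\infty$, which is precisely the setting under which the earlier uniformity statements apply.

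For the asymptotic \eqref{bal1}, I apply Theorem \ref{thm1} with base metric $h_k$. Uniformity gives
\[
\tr(Q_{k,\phi} P_k^* P_k Q_{k,\phi}) = \frac{1}{4\pi k}\int_X \tr\bigl(\phi\, \Delta^{E,h_k} \phi\bigr)\,\Omega + O(k^{-2}),
\]
with the $O(k^{-2})$ constant independent of $k$. The Chern connection and associated Bochner Laplacian depend continuously on the Hermitian metric in the smooth topology, so $\Delta^{E,h_k} \phi \to \Delta^{E,\tilde{h}_{HE}} \phi$ in $C^\infty$, and the leading integral converges to the one written with $\tilde{h}_{HE}$. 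The claim for $\phi$ varying in a $C^\infty$-bounded family follows identically because both the uniformity in Theorem \ref{thm1} and the smooth convergence tolerate this enlargement.

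For the eigenvalues, Theorem \ref{thm3} applied with $h = h_k$ yields $\nu_{j,k} = \lambda_j^{(k)}/(4\pi k^{n+1}) + O(k^{-n-2})$, where $\lambda_j^{(k)}$ is the $j$-th eigenvalue of $\Delta^{E,h_k}$ and the error is uniform over the family. Standard perturbation theory for self-adjoint elliptic operators on a compact manifold then gives $\lambda_j^{(k)} \to \lambda_j$, yielding the stated renormalized convergence. The trace estimate for $A,B \in F_{r,k}$ and the eigenspace approximation $\|H_{A_{\phi,k}} - \phi\|_{L^2_\Omega}^2 = O(k^{-1})$ follow the same pattern from Theorem \ref{thm4}. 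The main obstacle I foresee is to verify that the spectral gap hypotheses $\lambda_{p-1} < \lambda_p$ and $\lambda_q < \lambda_{q+1}$, crucial in step~3 of the induction underlying Theorem \ref{thm4}, persist for the perturbed operators $\Delta^{E,h_k}$ at large $k$; this is automatic once the eigenvalue convergence $\lambda_j^{(k)} \to \lambda_j$ is known uniformly on each fixed range of indices, and it is exactly what allows the $L^2$-nearest-point projection onto $F_{p,q,k}$, taken with respect to the varying $h_k$-inner product, to still furnish a good approximant of an eigenvector of the limiting operator $\Delta^{E,\tilde{h}_{HE}}$.
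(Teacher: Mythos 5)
Your proposal follows essentially the same route as the paper's proof: invoke Wang's theorem that under Mumford stability the balanced metrics $h_k$ exist and converge in $C^\infty$ to $\tilde{h}_{HE}$, then apply Theorems \ref{thm1}, \ref{thm3} and \ref{thm4} along the diagonal (level $k$ with base metric $h_k$), using their stated uniformity over compact families of uniformly equivalent smooth metrics, and pass to the limit in the leading coefficients (eigenvalues and eigenspaces of $\Delta^{E,h_k}$ converging to those of $\Delta^{E,\tilde{h}_{HE}}$). The only point you gloss over, which the paper addresses explicitly, is that Theorem \ref{thm1} requires $\phi$ to be Hermitian with respect to the metric actually used to define $Q_{k,\phi}$ and $P_k$ (here $h_k$), while $\phi$ is only assumed Hermitian for $\tilde{h}_{HE}$; the paper notes that by the construction of the balanced metrics this compatibility holds for all $h_k$, and your argument should record this (or an equivalent fix) before invoking the earlier theorems.
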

\begin{proof}
The balanced metric $h_k$ is a fixed point of the map $FS_k\circ Hilb_{k}: Met(E)\rightarrow Met(E)$ at level $k$. If we consider the sequence $$H_{k,l}=Hilb_{l}(h_k)\in Met(H^0(X,E(l))),$$ then the diagonal sequence $H_{k,k}$ is formed by balanced metrics. Let's  apply Theorem \ref{thm1} to the metrics $h_k$. We need to consider the family of operators $Q_{l,\phi,h_k}\in Herm(H^0(X,E(l)))$, for $l$ large enough and of course for $Q_{k,\phi,h_k}=Q_{k,\phi}$
 associated the balanced metric $h_k$. Similarly we introduce the operators $P_{l,h_k}$ that specify to $P_{k}$ when $l=k$. By construction of the balanced metric (see \cite{W2}), $\phi$ is also Hermitian with respect to all the $h_k$.\\
Therefore we can apply our previous results, and one has convergence 
$$\frac{1}{4\pi l}\tr(Q_{l,\phi,h_k}P_k^*P_kQ_{l,\phi,h_k})\rightarrow \int_X \phi\Delta^{E,h_k}(\phi) \Omega, $$ when $l\rightarrow + \infty$. Now \eqref{bal1} comes from the convergence in the smooth topology of $h_k$ towards $\tilde{h}_{HE}$. The other results of the theorem are obtained in a similar way by considering a diagonal argument and the uniformity in the results of convergence of Theorems \ref{thm3}, \ref{thm4}.
\end{proof}

\begin{cor}
 Under the assumptions of the above theorem, consider $h_{HE}$ the Hermitian-Einstein metric on $E$. Then, for any $\phi\in C^{\infty}(X,End(E))$, Hermitian with respect to ${h}_{HE}$, one has
\begin{equation*}\tr(Q_{k,\phi}P_k^*P_kQ_{k,\phi})\rightarrow \frac{1}{4\pi k}\int_X \phi\Delta^{E,h_{HE}}(\phi) \tilde{\Omega}
 \end{equation*}
where the Laplacian $\Delta^{E,h_{HE}}$ is computed with respect to the Hermitian-Einstein metric on $E$ and $\tilde{\Omega}=e^{\theta}\Omega$ with $\theta$ solution of 
the equation $\Delta_\omega \theta=\frac{1}{2}(S(\omega)-\bar{s})$ and $\int_X \tilde{\Omega}=\int_X e^{\theta}\Omega.$ Similar results as in Theorem \ref{bal1thm} hold for the eigenspaces and eigenvalues of the Laplacian $\Delta^{E,h_{HE}}$.
\end{cor}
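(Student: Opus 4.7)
The plan is to reduce the corollary to Theorem~\ref{bal1thm} by replacing the reference volume form $\omega^n/n!$ with the rescaled form $\tilde\Omega = e^{\theta}\Omega$ throughout. The remark at the end of Section~4 already records that Theorem~\ref{thm1} remains valid if $\omega^n/n!$ is replaced by any smooth positive volume form, provided this new volume form is used consistently to define $\Hilb$, the $L^2$-pairings, and the Bochner Laplacian $\Delta^{E,h}$. An inspection of Section~5 shows that the eigenvalue and eigenspace statements (Theorems~\ref{thm3}--\ref{thm4}), together with their uniform versions used in the proof of Theorem~\ref{bal1thm}, extend in exactly the same way; the only asymptotic inputs are the Bergman kernel expansion of Theorem~\ref{tian-b} and the Toeplitz expansion of Proposition~\ref{toeplitzexp}, both of which depend only smoothly on the choice of volume form.

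First I would solve $\Delta_\omega\theta=\tfrac12(S(\omega)-\bar s)$ on $X$; a smooth solution exists since the right-hand side has vanishing integral against $\omega^n/n!$, and I would normalise it so that $\int_X e^{\theta}\Omega = \int_X \Omega$. Setting $\tilde\Omega = e^{\theta}\Omega$, I would then rerun the proof of Theorem~\ref{bal1thm} with $\tilde\Omega$ replacing $\Omega$ everywhere: $\Hilb$ is built from $\tilde\Omega$, the moment map $\bar\mu$ is averaged against $\tilde\Omega$, and the balanced metrics $h_k$ are fixed points of $\FS\circ\Hilb$ in this modified setup. Wang's existence and smooth convergence results from \cites{W1,W2} apply verbatim in this setting, the key point being that their arguments are robust under a conformal rescaling of the volume form used in the $L^2$-pairing on $H^0(X,E(k))$.

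The crucial observation is that, with this particular $\tilde\Omega$, the limit of the balanced metrics is the genuine Hermitian-Einstein metric $h_{HE}$ rather than $\tilde h_{HE}$. Indeed, replacing $\omega^n/n!$ by $e^{\theta}\omega^n/n!$ in the Bergman kernel normalisation shifts the first-order coefficient $A_1(h,\omega) = \frac{\sqrt{-1}}{2\pi}\Lambda_\omega F_h + \tfrac12 S(\omega)\,Id_E$ by a scalar multiple of $\Delta_\omega\theta\cdot Id_E$, and by the defining equation for $\theta$ this shift exactly cancels the defect $\tfrac{\bar s - S(\omega)}{2}$ that distinguishes the almost-Hermitian-Einstein equation from the genuine Hermitian-Einstein equation. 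Once this identification is in place, the conclusion of Theorem~\ref{bal1thm} applied with $\tilde\Omega$ gives both the claimed asymptotic for $\tr(Q_{k,\phi}P_k^*P_kQ_{k,\phi})$ and the analogous statements on eigenvalues and eigenspaces of $\Delta^{E,h_{HE}}$, with all $L^2$-inner products taken with respect to $\tilde\Omega$.

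The main obstacle is the verification in the third paragraph: one must track how the coefficients in the Bergman and Toeplitz expansions transform under the rescaling $\Omega\mapsto e^{\theta}\Omega$ and check that the induced shift in the limit equation is precisely the one needed to replace $\tilde h_{HE}$ by $h_{HE}$. This is a bookkeeping exercise using the uniformity in smooth families already built into Theorems~\ref{thm1}, \ref{thm3}, \ref{thm4} and \ref{bal1thm}, but it is where the actual content of the corollary lies.
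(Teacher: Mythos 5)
Your proposal proves (at best) a different statement from the corollary. In the corollary, ``Under the assumptions of the above theorem'' means that the operators $Q_{k,\phi}$ and $P_k^*P_k$ on the left-hand side are the same ones as in Theorem \ref{bal1thm}: they are built from Wang's balanced metrics $h_k$, which are defined with respect to the fixed volume form $\Omega=\omega^n/n!$. The paper's (implicit) derivation is therefore a reformulation of Theorem \ref{bal1thm}, not a new construction: the almost Hermitian--Einstein metric $\tilde h_{HE}$ and the Hermitian--Einstein metric $h_{HE}$ differ by a conformal factor $e^{c\theta}$, where $\theta$ is exactly the function solving $\Delta_\omega\theta=\tfrac12(S(\omega)-\bar s)$ appearing in the statement; an endomorphism is Hermitian with respect to one metric iff it is with respect to the other, the induced fibre metric and Chern connection on $End(E)$ (hence the relevant Laplacian on endomorphisms) are insensitive to this scalar conformal change, and one then rewrites the limiting Dirichlet form of Theorem \ref{bal1thm} in terms of $h_{HE}$ and the weighted volume $\tilde\Omega$. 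By instead redefining $\Hilb$, the moment map $\bar\mu$ and the balanced metrics with respect to $\tilde\Omega$, you change the embeddings and hence the operators $P_k^*P_k$ and $Q_{k,\phi}$ themselves; even if every step of your argument were correct, it would concern a $\tilde\Omega$-balanced quantization scheme, not the quantities the corollary is about.

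Moreover the key steps of your route are not justified. Wang's existence and convergence theorems \cites{W1,W2} are proved for the volume form $\omega^n/n!$ with the metric on $L$ fixed; they do not ``apply verbatim'' to a Wang-type balanced problem with an arbitrary volume form, and establishing existence of such $\tilde\Omega$-balanced metrics for bundles and identifying their limit would be a new theorem requiring proof (note that in Theorem \ref{thmlinebal} the mechanism is different: there the metric on $L$ itself varies, which is what makes the $\Omega$-balanced theory work). Your proposed cancellation mechanism is also not what the asymptotics give: replacing $\Omega$ by $e^\theta\Omega$ in the $L^2$-structure multiplies the leading term of the density of states by $e^{-\theta}$, an order-one, non-constant change, rather than shifting the subleading coefficient $A_1$ by a multiple of $\Delta_\omega\theta\,Id_E$; so the claimed identification of the limit of your modified balanced metrics with $h_{HE}$ does not follow from the bookkeeping you describe, and you explicitly defer precisely this verification, which is where the whole content of your argument would lie.
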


\begin{rem}
 As we stressed earlier, when the metric on $E$ is Hermitian-Einstein, the Bochner Laplacian is related to the Kodaira Laplacian by the formula  $\Delta^{E,h_{HE}}=2\Delta_{\partial}=2\Delta_{\bar\partial}$.
\end{rem}

\subsection{Laplacian for $\Omega$-balanced metrics}

Let us restrict to the case of $E$ is the trivial line bundle. The case of $\Omega$-balanced metrics in the sense of \cite{D3} fits in the framework of the Theorem \ref{bal1thm}, since we know by \cite{Ke1} that they converge towards the solution to the Calabi problem. This gives the following result  as a corollary of the previous sections. 
\begin{thm}\label{thmlinebal}
For $P_k$ defined as above with respect to the $\Omega$-balanced metric $h_k$ on $L^k$, we have the asymptotic result
\begin{equation}\tr(Q_{k,f}P_k^*P_kQ_{k,f})\rightarrow \frac{1}{4\pi k}\int_X f\Delta_\omega(f) \Omega
 \end{equation}
for any $f \in C^{\infty}(X,\mathbb{R})$ and where $\omega$ satisfies the Monge-Amp\`ere equation $\omega^n/n!=\Omega$. The result holds if $f$ varies in a bounded subset of $C^{\infty}(X,\mathbb{R})$ in $C^\infty$-topology.
Furthermore, one has the convergence of the eigenvalues $\nu_{j,k}$ of the operator $P_k^*P_k$ towards the  eigenvalues $\lambda_j$ of $\Delta_\omega$ after renormalization, i.e
$$4\pi k^{n+1}\nu_{j,k}\rightarrow {\lambda_j}$$
Fix an integer $r>0$. There is a constant $C>0$ such that for all $A,B\in F_r$,
$$\Big\vert \Tr(AB)-k^n \langle {H_{A}},H_B\rangle_{L^2_\Omega} \Big\vert \leq ck^{-1}\Tr(A^2)^{1/2}\Tr(B^2)^{1/2}.$$
Moreover, let us fix integers $0<p<q$ such that $$\lambda_{p-1}<\lambda_p = \lambda_{p+1} = ... = \lambda_q <\lambda_{q+1}.$$  Given an eigenvector $\phi \in \Ker(\Delta_\omega- \lambda_p Id)$, let $A_{\phi,k}$ denote the point $F_{p,q}$ with $H_{A_{\phi,k}}$ nearest to $\phi$ as measured in $L^2$-norm. Then 
$$\Vert H_{A_{\phi,k}}-\phi\Vert^2_{L^2_\Omega}= O(k^{-1})$$
and this estimate is uniform in $\phi$ if we require that $\Vert \phi\Vert_{L^2_\Omega}=1$.
\end{thm}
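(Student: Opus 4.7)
The proof will proceed in parallel with the argument for Theorem \ref{bal1thm}, the main modifications being that (i) we work with a volume form $\Omega$ which is not a priori equal to $\omega^n/n!$ for any reference $\omega$, so we invoke the Remark after Theorem \ref{thm1}; and (ii) we are in the line bundle case, so the Bochner Laplacian on $\End(L^k) = \underline{\mathbb{C}}$ is simply $\Delta_\omega$ acting on functions, and we specialise to Hermitian (i.e.\ real-valued) sections $f \in C^\infty(X,\mathbb{R})$.

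First I would fix, for each large $k$, the $\Omega$-balanced metric $h_k$ on $L^k$ in the sense of Donaldson--Zhang--Luo. By the main result of \cite{Ke1}, the sequence of induced Kähler forms $\omega_k := \frac{\sqrt{-1}}{2\pi k}\partial\bar\partial \log h_k$ converges in the $C^\infty$ topology to the unique Kähler form $\omega$ satisfying the Calabi--Yau equation $\omega^n/n! = \Omega$. In particular, the metrics $h_k$ (rescaled to metrics on $L$) lie in a uniformly equivalent family that is precompact in $C^\infty$, which is exactly the setting in which the uniformity statements in Theorems \ref{thm1}, \ref{thm3} and \ref{thm4} apply.

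Next I would run a diagonal argument. For each fixed $k$, and for a Hermitian endomorphism $f \in C^\infty(X,\mathbb{R})$, consider the two-parameter family of operators $Q_{l,f,h_k}$ and $P_{l,h_k}$ built from $\Hilb_l(h_k)$ at any level $l \geq k$. By Theorem \ref{thm1} and the Remark at the end of Section \ref{Sect1}, for each fixed $k$ one has
\begin{equation*}
\tr(Q_{l,f,h_k} P_{l,h_k}^* P_{l,h_k} Q_{l,f,h_k}) = \frac{1}{4\pi l}\int_X f \Delta^{L,h_k} f \; \Omega + O(l^{-2}),
\end{equation*}
where $\Delta^{L,h_k}$ denotes the Bochner Laplacian computed with respect to the curvature form of $h_k$. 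Setting $l = k$ along the diagonal and using the uniformity in $h$, the remainder is still $O(k^{-2})$ because $h_k$ stays in a $C^\infty$-compact family of uniformly equivalent metrics. Finally, by $C^\infty$-convergence $\omega_k \to \omega$, the Bochner Laplacians $\Delta^{L,h_k}$ converge in operator norm (on any fixed finite regularity space) to $\Delta_\omega$, which on functions is simply the scalar Laplacian. This gives the asymptotic \eqref{bal1}.

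The eigenvalue and eigenspace statements follow by the same diagonal procedure: fix $r > 0$, apply the inductive conclusion of Theorems \ref{thm3} and \ref{thm4} to the fixed metric $h_k$ at level $l$, and take $l = k$. The hypothesis that $E$ be simple is automatic for the trivial line bundle, so the induction hypothesis $(I)$ is valid at the base level. The statement about $\|H_{A_{\phi,k}} - \phi\|_{L^2_\Omega}^2 = O(k^{-1})$ and the trace comparison between $\Tr(AB)$ and $k^n\langle H_A,H_B\rangle_{L^2}$ transfer verbatim, since the proofs in Section \ref{Sect2} depend on the metric on $E$ only through the uniformity assertions already established.

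The only real subtlety, and the step I would be most careful with, is the interplay between the two distinct roles of the volume form. In Section \ref{Sect2} the $L^2$ pairings are computed with respect to $\omega^n/n!$ and the Laplacian appearing on the right-hand side is the one associated to $\omega$ itself; here we must everywhere substitute $\Omega$ for $\omega^n/n!$. The Remark after Theorem \ref{thm1} handles this for the leading order computation of $\mathrm{Proposition}$ \ref{toeplitzexp}, and the same flexibility holds throughout Section \ref{Sect2} because the proofs there only use the asymptotic expansions of $B_k$, $K_{k,f}$ and $K_{k,f,g}$ in a black-box fashion. Once we observe that for the limit Kähler form $\omega$ we have precisely $\omega^n/n! = \Omega$, all the coefficients appearing in the final statement unambiguously refer to $\omega$, and the proof is complete.
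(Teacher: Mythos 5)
Your proposal is correct and follows essentially the same route as the paper: the paper proves Theorem \ref{thmlinebal} as a corollary of the framework of Theorem \ref{bal1thm}, using the convergence of the $\Omega$-balanced metrics to the solution of the Calabi problem from \cite{Ke1} together with the diagonal argument and the uniformity statements of Theorems \ref{thm1}, \ref{thm3} and \ref{thm4} (and the Remark allowing $\Omega\neq\omega^n/n!$). Your write-up simply makes explicit the same diagonal/uniformity reasoning that the paper's proof of Theorem \ref{bal1thm} carries out.
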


\section{Explicit calculations for $\C P^n$.}

We will now illustrate our results via direct computations in the special case when our manifold is the complex projective space polarized by the dual of the tautological line bundle. We even get stronger results than the ones which hold in full generality. In fact, in this case the map $H:\sqrt{-1}\mathfrak{u}(N+1)\rightarrow C^\infty(\C P^N,\R)$ sends the eigenspaces of $P_k^*P_k$ to those of the Laplacian. Furthermore, we show that $H$ is an isometry (at least up to a constant) and that the eigenvalues of $P_k^*P_k$ converge to those of $\Delta$. Note that the fact that $H$ is an isometry between the eigenspaces of $P_k^*P_k$ is only true asymptotically in general. \\

Consider the sequence of embeddings $\iota_k:\C P^n\rightarrow \C P^N$ defined using the standard basis of $H^0\left(\C P^n,O(k)\right)$. Here we denote
$$
N+1=h^0\left(\C P^n,O(k)\right)={n+k\choose n}.
$$
By Lemma \ref{lemm0} we know that for all $A,B\in\sqrt{-1}\mathfrak{u}(N+1)$ we have that
\begin{equation}
\label{example_eq1}
\tr\left(A P_k^*P_k(B)\right)=\tr(AB\bar\mu_k)-\int_{\C P^N}H_AH_B\frac{\omega_{FS}^n}{n!}
\end{equation}
where $\mu([Z_0:\dots:Z_N])=\left(\frac{Z_i\bar Z_j}{\sum_\ell |Z_\ell|^2}\right)$ is the moment map of the unitary group acting on $\C P^N$ and $H_A=\tr(A\mu)$ is the Hamiltonian function associated to $A$. Note that if the embeddings $\iota_k$ are balanced, i.e. if
$$
(\bar{\mu}_k)_{ij}=\int_{\C P^N}\frac{Z_i\bar Z_j}{\sum_\ell |Z_\ell|^2}\frac{\omega_{FS}^N}{N!}=\frac{1}{N+1}\delta_{ij}
$$
then the first term of the right-hand side of \eqref{example_eq1} reduces to $\frac{1}{N+1}\tr(AB)$. \\

Let us start by recalling the spectral theorem for the Laplacian on complex projective space.

\begin{thm}[\cite{Gri}]
If $\Delta$ denotes the Laplacian on $\C P^n$ with respect to the Fubini-Study metric, one has the following:
\begin{enumerate}
\item The eigenvalues of $\Delta$ are given by $\lambda_i=4\pi i(i+n)$ 
 where $i\in\mathbb N$.
\item Denote by $\mathcal{W}_i=\{f\in C^\infty(\C P^n)\,|\,\Delta f=\lambda_i f\}$ the $i$-th eigenspace of $\Delta$. Then
$$
L^2(\C P^n)=\bigoplus_{i=0}^\infty \mathcal{W}_i.
$$
\item $U(n+1)$ acts on $\C P^n$ and induces an action on $\mathcal{W}_i$. Moreover $\mathcal{W}_i$ is an irreducible representation of $SU(n+1)$.
\item $\mathcal{W}_i$ consists of all functions of the form
$$
\frac{\sum_{|I|=|J|=i}\sqrt{{i\choose I}{i\choose J}}a_{IJ}Z^I\bar Z^J}{\left(\sum_\ell|Z_\ell|^2\right)^i}
$$
where $\sum_{|I|=|J|=i}\sqrt{{i\choose I}{i\choose J}}a_{IJ}Z^I\bar Z^J$ is a harmonic polynomial on $\C^{n+1}$. Here $I$ and $J$ denote multi-indices meaning that
\begin{align*}
I&=(i_0,\dots,i_n), \\
|I|&=i_0+\dots+i_n, \\
{i\choose I}&=\frac{k!}{i_0! i_1!\cdots i_n!},\\
Z^I&=Z_0^{i_0}\cdots Z_n^{i_n}, \\
a_{IJ}&\in\sqrt{-1}\mathfrak{u}(Sym^k\C^{n+1}).
\end{align*}
\end{enumerate}
\end{thm}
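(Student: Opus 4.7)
The plan is to reduce the problem on $\C P^n$ to the classical spectral theory of the round sphere $S^{2n+1}$ via the Hopf fibration, and then invoke standard representation theory of $SU(n+1)$ to read off the irreducibility and explicit shape of the eigenspaces. Concretely, I would first fix the normalization: equipping $\C P^n$ with the Fubini-Study metric such that $\omega_{FS} = \tfrac{\sqrt{-1}}{2\pi}\partial\bar\partial\log\sum|Z_j|^2$, the canonical projection $\pi: S^{2n+1}\to\C P^n$ is (up to a constant factor) a Riemannian submersion with totally geodesic $S^1$-fibres. Consequently $\pi^*$ identifies $C^\infty(\C P^n)$ with $C^\infty(S^{2n+1})^{S^1}$ and intertwines $\Delta^{\C P^n}$ with the restriction of $\Delta^{S^{2n+1}}$ to $S^1$-invariant functions, modulo an explicit constant that I would pin down once and for all.

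Next I would invoke the classical spectral description of $S^{2n+1}\subset\R^{2n+2}=\C^{n+1}$: the space $L^2(S^{2n+1})$ decomposes as the Hilbert sum of the spaces $\mathcal H_d$ of restrictions to the sphere of harmonic polynomials of total degree $d$ on $\C^{n+1}$, with $\Delta^{S^{2n+1}}|_{\mathcal H_d} = d(d+2n)\mathrm{Id}$. The $S^1$-action $Z\mapsto e^{\sqrt{-1}\theta}Z$ on $\C^{n+1}$ scales a monomial $Z^I\bar Z^J$ by $e^{\sqrt{-1}(|I|-|J|)\theta}$, so the $S^1$-invariant harmonic polynomials are exactly those of bi-degree $(i,i)$, which forces $d=2i$ and hence, after matching normalizations, yields the eigenvalue $\lambda_i=4\pi i(i+n)$ and the explicit formula of item (4): any eigenfunction is $|Z|^{-2i}$ times a bi-degree $(i,i)$ harmonic polynomial on $\C^{n+1}$, and conversely every such expression is well-defined on $\C P^n$ and lies in $\mathcal W_i$.

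For items (2) and (3), the $L^2$-decomposition is immediate from the decomposition on $S^{2n+1}$ restricted to $S^1$-invariants. The $SU(n+1)$-equivariance of $\Delta$ (it is the Laplacian of an $SU(n+1)$-invariant Riemannian metric) gives an $SU(n+1)$-action on each $\mathcal W_i$. Irreducibility I would argue by identifying $\mathcal W_i$ with the space of harmonic polynomials of bi-degree $(i,i)$, which is a well-known irreducible $SU(n+1)$-representation with highest weight $i(\varepsilon_0-\varepsilon_n)$ (it is the kernel of the Laplacian $\sum\partial^2/\partial Z_j\partial\bar Z_j$ restricted to $\mathrm{Sym}^i\C^{n+1}\otimes\mathrm{Sym}^i(\C^{n+1})^*$). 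This is either quoted from the standard Peter-Weyl/Cartan-Helgason theory for the compact symmetric space $\C P^n=SU(n+1)/S(U(n)\times U(1))$, or verified directly by exhibiting a highest-weight vector such as $(Z_0\bar Z_n)^i/|Z|^{2i}$ and checking that it generates $\mathcal W_i$ under the $SU(n+1)$-action.

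The only genuine obstacle is bookkeeping: pinning down the constant $4\pi$ in $\lambda_i=4\pi i(i+n)$ requires consistent conventions for the Fubini-Study metric induced from $\omega_{FS}\in c_1(\mathcal O(1))$, the associated volume form, the Riemannian submersion constant between the round sphere of radius $1$ and $\C P^n$, and the sign convention for $\Delta$. Everything else follows from plugging these normalizations into the Hopf-fibration reduction and the classical spherical harmonics decomposition.
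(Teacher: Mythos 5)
The paper does not actually prove this statement: it is quoted verbatim from Grinberg \cite{Gri}, so there is no in-paper argument to compare with, and your task is really to supply the classical proof, which you do correctly. Your route -- pulling back along the Hopf fibration $S^{2n+1}\to\C P^n$, using that the fibres are totally geodesic (hence minimal) so that $\pi^*$ intertwines the Laplacians on basic functions, and then restricting the spherical-harmonics decomposition $L^2(S^{2n+1})=\bigoplus_d\mathcal H_d$, $\Delta|_{\mathcal H_d}=d(d+2n)$, to $S^1$-invariants -- is the standard argument and delivers items (1), (2) and (4) at once: invariance forces bidegree $(i,i)$, so $d=2i$, and the descended functions are exactly $|Z|^{-2i}$ times bidegree-$(i,i)$ harmonic polynomials. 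The normalization you flag does close up as you expect: the quotient metric of the unit sphere is the Fubini--Study metric with K\"ahler form $\tfrac{\sqrt{-1}}{2}\partial\bar\partial\log|Z|^2$ and eigenvalues $2i(2i+2n)=4i(i+n)$, and rescaling to the paper's $\omega_{FS}\in c_1(\mathcal O(1))$, i.e.\ dividing the metric by $\pi$, multiplies the eigenvalues by $\pi$, giving $\lambda_i=4\pi i(i+n)$. For (3), identifying $\mathcal W_i$ with the bidegree-$(i,i)$ harmonic polynomials and quoting Cartan--Helgason (highest weight $i(\varepsilon_0-\varepsilon_n)$, with $(Z_0\bar Z_n)^i$ as highest-weight vector) is legitimate; the only point worth making explicit is that the paper later applies a \emph{real} Schur lemma to the real eigenspace, and real irreducibility does follow from your argument because the complexified eigenspace is irreducible (and self-conjugate). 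So the proposal is sound; it simply supplies the standard proof that the paper outsources to \cite{Gri}, whose own treatment via harmonic polynomials on projective space is in the same spirit.
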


For the sake of clearness, we will restrict in the sequel to the case $n=1$, the sphere of volume $2\pi$. The general case is not fundamentally more difficult but the computations become a bit more messy. Furthermore we will consider the embedding of $\C P^1$ into $\C P^k$ that is balanced.\\

Fix $k\gg0$ and define 
$$
U_i=\{A\in\sqrt{-1}\mathfrak{u}(k+1) \,|\, H_A\in \mathcal{W}_i\}.
$$
Our goal is to show that the leading order of $P_k^*P_k$ restricted to $U_i$ is a multiple of the identity and that this multiple is precisely the $i$th eigenvalue of $\Delta$. This illustrates our general results from Theorems \ref{thm3} and \ref{thm4} that the eigenvalues of $P_k^*P_k$ converge to those of $\Delta$ and that eigenvectors converge isometrically under $H:\sqrt{-1}\mathfrak{u}(k+1)\rightarrow C^\infty(X,\R)$. \\

First observe that in our special case, $H:U_i\rightarrow \mathcal{W}_i$ is a $U(k+1)$-equivariant isomorphism. Since $\mathcal{W}_i$ is an irreducible real representation of $U(k+1)$ so is $U_i$. Furthermore it is easy to see that
$$
\inner{A,B}_1=\tr(AB)
$$
and
$$
 \inner{A,B}_2=\int_{\C P^1}H_A H_B \,\omega_{FS}
$$
define both $U(k+1)$-invariant inner products on $U_i$. This is trivial for the first one since the action on $\sqrt{-1}\mathfrak{u}(k+1)$ is given by conjugation. For the second one, observe that for $U\in U(k+1)$ we have
\begin{align*}
H_{U\cdot A}(z)&=H_{UAU^{-1}}(z)\\
&=\tr(AU^{-1}\mu_k(z)U)\\
&=\tr(A\mu_k(zU^{-1}))=H_A(zU^{-1}).
\end{align*}
Hence,
\begin{align*}
\inner{U\cdot A,U\cdot B}_2&=\int_{\C P^1}H_A(zU^{-1})H_B(zU^{-1}) \,\omega_{FS}\\
&=\int_{\C P^1}H_A(z) H_B(z) \,\omega_{FS}.
\end{align*}
Here the last equality follows from a change of variables and the fact that our embedding of $\C P^1$ into $\C P^k$ is supposed to be balanced. This implies that the volume form $\omega_{FS}$ is invariant. It follows then from a real version of Schur's lemma that both inner products only differ by a multiplicative constant. For the sake of completeness, let us briefly recall how this works. By the $U(k+1)$-invariance, there is an equivariant symmetric map $\varphi:U_i\rightarrow U_i$ such that 
$$
\inner{A,B}_2=\inner{\varphi(A),B}_1.
$$
On the other hand, since $\varphi$ is symmetric, $\varphi$ has a real eigenvalue and its associated eigenspace is invariant under $U(k+1)$.  The irreducibility of the representation then implies that the eigenspace is $U_i$. Therefore, $\varphi$ is a scalar matrix and thus the inner products differ by a real multiplicative constant. Hence there exists $C_{i,k}\in\R$ such that for any $A,B\in U_i$ we have
$$
\inner{A,B}_2=C_{i,k}\inner{A,B}_1.
$$
Furthermore in the balanced case, equation \eqref{example_eq1} implies that
\begin{equation}
\inner{A,P_k^*P_k(B)}_1=\left(\frac{1}{k+1}-C_{i,k}\right)\inner{A,B}_1
\end{equation}
which shows that $P_k^*P_k$ restricted to $U_i$ is a multiple of the identity. As a corollary, we get that $H$ sends eigenspaces of $P_k^*P_k$ isometrically to eigenspaces of $\Delta$, at least up to a constant. \\

The end of this section is devoted to compute the constants $C_{i,k}$. Clearly it is sufficient to find a particular $A\in U_i$ for which we can calculate both $\inner{\cdot,\cdot}_1$ and $\inner{\cdot,\cdot}_2$ explicitely. Their quotient gives then the required constant. \\

Denote by $[Z:W]$ homogeneous coordinates on $\C P^1$. The homogeneous polynomials 
$$
\sqrt{k \choose j}Z^j W^{k-j}
$$ 
for $j=0,\dots ,k$ define a basis of $H^0(\C P^1, O(k))$. Here the coefficients are chosen to make sure that the sections have unit norm. Furthermore, it turns out that the embedding they define into $\C P^{k}$ is balanced. \\

Consider the eigenfunction
$$
H_A=\frac{Z^i\bar W^i+\bar Z^i W^i}{\left(|Z|^2+|W|^2\right)^i}\in \mathcal{W}_i
$$
for some appropriate Hermitian matrix $A$. After multiplying the numerator and the denominator by $\left(|Z|^2+|W|^2\right)^{k-i}$ and developing that term we can write
\begin{align*}
H_A
&=\frac{\sum_{j=0}^{k-i}{k-i\choose j}|Z|^{2j}|W|^{2(k-i-j)}\left(Z^i\bar W^i+\bar Z^i W^i\right)}{\left(|Z|^2+|W|^2\right)^k} \\
&=2\frac{\sum_{j=0}^{k-i}   \sqrt{{k\choose i+j}{k\choose j}}^{-1}{k-i\choose j}  \textrm{Re}\left(\sqrt{{k\choose i+j}} Z^{i+j}W^{k-(i+j)} \sqrt{{k\choose j}} \bar Z^j\bar W^{k-j}\right)  }{\left(|Z|^2+|W|^2\right)^k}
\end{align*}
Therefore,
\begin{align*}
\tr(A^2)
&=2\sum_{j=0}^{k-i}\frac{{k-i\choose j}^2}{{k\choose i+j}{k\choose j}},\\
&=\frac{2\sum_{j=0}^{k-i}(k-j)\cdots(k-j-i+1)(j+2)\cdots(j+1) }{\left(k(k-1)\cdots(k-i+1)\right)^2}.
\end{align*}
A tedious but straightforward computation implies than that
\begin{align*}
\sum_{j=0}^{k-i}(k-j)&\cdots(k-j-i+1)(j+2)\cdots(j+1) \\
=&\;\frac{1}{(2i+1){2i\choose i}}\,k^{2i+1}+\frac{1}{{2i\choose i}}\,k^{2i}+O(k^{2i-1}).
\end{align*}
On the other hand we get
\begin{align*}
\left(k(k-1)\cdots(k-i+1)\right)^2
&=\left(k^i-\frac{i(i-1)}{2}k^{i-1}+O(k^{i-2})\right)^2 \\
&=k^{2i}-i(i-1)k^{2i-1}+O(k^{2i-1}).
\end{align*}
Putting these together, we get the following formula\footnote{Alternatively, one can prove by induction that $\tr(A^2)=\frac{ 4 (k+i+1)! (i+1)! (k-i) ! i! }{ (k!)^2 (2i+2)!}$.} 
\begin{align*}
\norm{A}_1^2=
\tr(A^2)
&=\alpha \,k\,\frac{1+(2i+1)k^{-1}+O(k^{-2})}{1-i(i-1)k^{-1}+O(k^{-2})}, \\
&=\alpha k \left(1+(i^2+i+1)k^{-1}+O(k^{-2})\right),
\end{align*}
where
$$
\alpha=\frac{2}{(2i+1){2i\choose i}}.
$$

To be able to deduce the constants $C_{i,k}$ we now only have to compute $\norm{A}_2^2$. 
\begin{align*}
\inner{A,A}_2
&=\int_{\C P^1}H_A^2\,\omega_{FS},\\
&=\int_{\C P^1}\frac{\left(Z^i\bar W^i + \bar Z^i W^i\right)^2}{\left(|Z|^2+|W|^2\right)^{2i}}\omega_{FS}, \\
&=2\textrm{Re}\int_{\C P^1}\frac{Z^{2i}\bar W^{2i}}{\left(|Z|^2+|W|^2\right)^{2i}}\omega_{FS}+2\int_{\C P^1}\frac{|Z|^{2i}|W|^{2i}}{\left(|Z|^2+|W|^2\right)^{2i}}\omega_{FS}.
\end{align*} 
By symmetry, the first of these integrals vanishes and the second one can be evaluated explicitly using  the local coordinate $z={W}/{Z}$. In fact we get
\begin{align*}
2\int_{\C P^1}\frac{|Z|^{2i}|W|^{2i}}{\left(|Z|^2+|W|^2\right)^{2i}}\omega_{FS}
&=\frac{\sqrt{-1}}{\pi}\int_\C \frac{|z|^{2i}}{(1+|z|^2)^{2i+2}} dz\wedge d\bar z \\
&=\frac{2\pi}{\pi(2i+1){2i\choose i}}=\alpha.
\end{align*}
Hence we proved the following Lemma.
\begin{lem}
For any $A,B\in U_i\subseteq\sqrt{-1}\mathfrak{u}(\textrm{Sym}^k\C^2)$ one has
$$
\int_{\C P^1}H_A H_B \,\omega_{FS}=\frac{1}{k}\left(1-(i^2+i+1)k^{-1}+O(k^{-2})\right)\tr(AB).
$$
\end{lem}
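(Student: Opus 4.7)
The plan is to exploit the Schur-type argument already assembled in the preceding paragraphs. Since $U_i$ is an irreducible real $U(k+1)$-representation and the two inner products $\inner{A,B}_1=\tr(AB)$ and $\inner{A,B}_2=\int_{\C P^1}H_A H_B\,\omega_{FS}$ are both $U(k+1)$-invariant on $U_i$, they must differ by a single scalar factor $C_{i,k}$. Thus the whole lemma reduces to computing $C_{i,k}$ for one cleverly chosen test element of $U_i$.

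First I would pick the Hermitian matrix $A\in U_i$ whose Hamiltonian is $H_A=\frac{Z^i\bar W^i+\bar Z^iW^i}{(|Z|^2+|W|^2)^i}$, and read off its matrix coefficients in the orthonormal basis $\sqrt{\binom{k}{j}}\,Z^jW^{k-j}$ of $H^0(\C P^1,O(k))$ by multiplying numerator and denominator by $(|Z|^2+|W|^2)^{k-i}$ and expanding via the binomial theorem. This presentation yields the finite sum
$$
\tr(A^2)=2\sum_{j=0}^{k-i}\frac{\binom{k-i}{j}^2}{\binom{k}{i+j}\binom{k}{j}},
$$
and expanding the falling-factorial ratios to two leading orders gives $\tr(A^2)=\alpha k\bigl(1+(i^2+i+1)k^{-1}+O(k^{-2})\bigr)$ with $\alpha=\frac{2}{(2i+1)\binom{2i}{i}}$.

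Then I would compute $\inner{A,A}_2$ directly: the holomorphic-antiholomorphic cross term $\int Z^{2i}\bar W^{2i}/(|Z|^2+|W|^2)^{2i}\,\omega_{FS}$ vanishes by the $S^1$-symmetry of the Fubini-Study volume, while the diagonal integral reduces to a standard beta-type integral in the affine chart $z=W/Z$ and evaluates to exactly $\alpha$. Hence $C_{i,k}=\inner{A,A}_2/\tr(A^2)=\frac{1}{k}\bigl(1-(i^2+i+1)k^{-1}+O(k^{-2})\bigr)$, and substituting back into $\inner{A,B}_2=C_{i,k}\tr(AB)$ yields the claimed expansion.

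The only genuine obstacle is the asymptotic analysis of the rational sum: one must control $\sum_{j=0}^{k-i}(k-j)\cdots(k-j-i+1)(j+1)\cdots(j+i)$ to the subleading order $k^{2i}$ and then divide by $(k(k-1)\cdots(k-i+1))^2$ while tracking the subleading $-i(i-1)k^{2i-1}$ term in the denominator. This is elementary but demands careful bookkeeping; an alternative would be to establish the closed form $\tr(A^2)=\frac{4(k+i+1)!(i+1)!(k-i)!\,i!}{(k!)^2(2i+2)!}$ by induction on $k$ and then apply Stirling's formula to extract the two leading orders.
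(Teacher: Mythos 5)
Your proposal is correct and follows essentially the same route as the paper: the Schur-lemma reduction to a single constant $C_{i,k}$, the same test element $H_A=\frac{Z^i\bar W^i+\bar Z^iW^i}{(|Z|^2+|W|^2)^i}$, the binomial expansion giving $\tr(A^2)=2\sum_j\binom{k-i}{j}^2/\bigl(\binom{k}{i+j}\binom{k}{j}\bigr)$ with its two-term asymptotics, and the direct evaluation of $\int_{\C P^1}H_A^2\,\omega_{FS}=\alpha$ in the affine chart after the cross term vanishes by symmetry. Even your suggested closed form for $\tr(A^2)$ appears as a footnote in the paper, so there is nothing to add.
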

From here it is now easy to get the leading order term of the eigenvalues of $P_k^*P_k$ as expected from Theorem \ref{thm3}.

\begin{prop}
For any $A,B\in U_i\subseteq\sqrt{-1}\mathfrak{u}(\textrm{Sym}^k\C^2)$ one has
$$
\tr\left(A P_k^*P_k (B)\right)=\left(\frac{4\pi i(i+1)}{4\pi k^2}+O(k^{-3})\right)\tr(AB).
$$
\end{prop}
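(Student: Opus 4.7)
The proof is now a routine combination of the balanced property of the embedding together with the Lemma just proved. By equation \eqref{example_eq1}, for any $A,B\in\sqrt{-1}\mathfrak{u}(k+1)$ we have
$$
\tr\bigl(A P_k^*P_k(B)\bigr)=\tr(AB\bar\mu_k)-\int_{\C P^1} H_A H_B\,\omega_{FS}.
$$
Since the embeddings $\iota_k:\C P^1\hookrightarrow\C P^k$ under consideration are chosen to be balanced, $\bar\mu_k=\frac{1}{k+1}\,\Id$, so the first term on the right is simply $\frac{1}{k+1}\tr(AB)$. The plan is thus to combine this with the preceding Lemma, which gives the asymptotic expansion of $\int H_A H_B\,\omega_{FS}$ for $A,B\in U_i$, and then to expand everything as a power series in $k^{-1}$.

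Concretely, I would first write
$$
\frac{1}{k+1}=\frac{1}{k}\Bigl(1-k^{-1}+O(k^{-2})\Bigr)=\frac{1}{k}-\frac{1}{k^2}+O(k^{-3}),
$$
and combine it with the Lemma's identity
$$
\int_{\C P^1} H_A H_B\,\omega_{FS}=\frac{1}{k}\Bigl(1-(i^2+i+1)k^{-1}+O(k^{-2})\Bigr)\tr(AB).
$$
Substituting both into \eqref{example_eq1} yields
$$
\tr\bigl(AP_k^*P_k(B)\bigr)=\left(\frac{1}{k}-\frac{1}{k^2}-\frac{1}{k}+\frac{i^2+i+1}{k^2}+O(k^{-3})\right)\tr(AB).
$$
The leading $1/k$ terms cancel exactly, leaving the coefficient $(i^2+i+1)-1=i(i+1)$ for the $k^{-2}$ term, which is the announced expression $\frac{4\pi i(i+1)}{4\pi k^2}+O(k^{-3})$.

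There is no real obstacle here: the algebra is self-contained and every input is already in place. The only thing worth emphasizing is that this explicit result matches the general prediction of Theorem \ref{thm3} with $n=1$ and $\lambda_i=4\pi i(i+1)$, and that both cancellations (namely the $\frac{1}{k}$-cancellation coming from the balanced condition, and the emergence of $i(i+1)$ as the Laplacian eigenvalue) are conceptually the finite-dimensional shadow of the quantum-classical correspondence established in the earlier sections.
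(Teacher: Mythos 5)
Your proposal is correct and follows essentially the same route as the paper: apply equation \eqref{example_eq1} together with the balanced condition $\bar\mu_k=\frac{1}{k+1}\Id$, substitute the Lemma's expansion of $\int_{\C P^1}H_AH_B\,\omega_{FS}$, and expand $\frac{1}{k+1}$ in powers of $k^{-1}$ so that the $1/k$ terms cancel and the coefficient $i(i+1)$ emerges at order $k^{-2}$. Nothing is missing.
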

\begin{proof}
We have
\begin{align*}
\tr\left(A P_k^*P_k (B)\right)
&=\frac{1}{k+1}\tr(AB)-\int_{\C P^1} H_AH_B, \\
&=\left(\frac{1}{k+1}-\frac{1}{k}\left(1-(i^2+i+1)k^{-1}+O(k^{-2})\right)\right)\tr(AB), \\
&=\left(\frac{1}{k}-\frac{1}{k^2}-\frac{1}{k}+\frac{i^2+i+1}{k^2}+O(k^{-3})\right)\tr(AB), \\
&=\left(\frac{i(i+1)}{k^2}+O(k^{-3})\right) \tr(AB).
\end{align*}
\end{proof}
\begin{bibdiv}

\begin{biblist}

\bib{Bou}{article}{
    AUTHOR = {Bouche, Thierry},
     TITLE = {Convergence de la m\'etrique de {F}ubini-{S}tudy d'un fibr\'e
              lin\'eaire positif},
   JOURNAL = {Ann. Inst. Fourier (Grenoble)},
    VOLUME = {40},
      YEAR = {1990},
    NUMBER = {1},
     PAGES = {117--130},
      ISSN = {0373-0956},
}

\bib{C-K}{article}{
    AUTHOR = { Cao, H-D.},
    AUTHOR = { Keller, Julien},
     TITLE = {On the {C}alabi problem: a finite-dimensional approach},
   JOURNAL = {J. Eur. Math. Soc. (JEMS)},
    VOLUME = {15},
      YEAR = {2013},
    NUMBER = {3},
     PAGES = {1033--1065},
      ISSN = {1435-9855},
}

\bib{Ca}{article}{
    AUTHOR = {Catlin, David},
     TITLE = {The {B}ergman kernel and a theorem of {T}ian},
 BOOKTITLE = {Analysis and geometry in several complex variables ({K}atata,
              1997)},
    SERIES = {Trends Math.},
     PAGES = {1--23},
 PUBLISHER = {Birkh\"auser Boston, Boston, MA},
      YEAR = {1999},
}

\bib{Do2}{article}{
    AUTHOR = {Donaldson, S. K.},
     TITLE = {Scalar curvature and projective embeddings. {I}},
   JOURNAL = {J. Differential Geom.},
    VOLUME = {59},
      YEAR = {2001},
    NUMBER = {3},
     PAGES = {479--522},
      ISSN = {0022-040X},
}

\bib{D3}{article}{
    AUTHOR = {Donaldson, S. K.},
     TITLE = {Some numerical results in complex differential geometry},
   JOURNAL = {Pure Appl. Math. Q.},
    VOLUME = {5},
      YEAR = {2009},
    NUMBER = {2, Special Issue: In honor of Friedrich Hirzebruch. Part 
              1},
     PAGES = {571--618},
      ISSN = {1558-8599},
}

\bib{D4}{article}{
    AUTHOR = {Donaldson, S. K.},
     TITLE = {Remarks on gauge theory, complex geometry and {$4$}-manifold
              topology},
 BOOKTITLE = {Fields {M}edallists' lectures},
    SERIES = {World Sci. Ser. 20th Century Math.},
    VOLUME = {5},
     PAGES = {384--403},
 PUBLISHER = {World Sci. Publ., River Edge, NJ},
      YEAR = {1997}
}

\bib{Fi1}{article}{                                                                            
    AUTHOR = {Fine, Joel},                                                                      
     TITLE = {Calabi flow and projective embeddings},                                           
      NOTE = {With an appendix by Kefeng Liu and Xiaonan Ma},                                   
   JOURNAL = {J. Differential Geom.},                                                           
    VOLUME = {84},                                                                              
      YEAR = {2010},                                                                            
    NUMBER = {3},                                                                               
     PAGES = {489--523},                                                                        
      ISSN = {0022-040X},                                                                       
}

\bib{Fi3}{article}{                                                                            
    AUTHOR = {Fine, Joel},
     TITLE = {Quantization and the {H}essian of {M}abuchi energy},
   JOURNAL = {Duke Math. J.},
    VOLUME = {161},
      YEAR = {2012},
    NUMBER = {14},
     PAGES = {2753--2798},
      ISSN = {0012-7094},
 
}

\bib{Gri}{article}{
    AUTHOR = {Grinberg, Eric L.},
     TITLE = {Spherical harmonics and integral geometry on projective
              spaces},
   JOURNAL = {Trans. Amer. Math. Soc.},
    VOLUME = {279},
      YEAR = {1983},
    NUMBER = {1},
     PAGES = {187--203},
      ISSN = {0002-9947},
}

\bib{Ke1}{article}{
    AUTHOR = {Keller, Julien},
     TITLE = {Ricci iterations on {K}\"ahler classes},
   JOURNAL = {J. Inst. Math. Jussieu},
    VOLUME = {8},
      YEAR = {2009},
    NUMBER = {4},
     PAGES = {743--768},
} 

\bib{LM}{article}{
    AUTHOR = {Liu, Kefeng},
    AUTHOR = {Ma, Xiaonan},
     TITLE = {A remark on: ``{S}ome numerical results in complex
              differential geometry'' [arxiv.org/abs/math/0512625] by {S}.
              {K}. {D}onaldson},
   JOURNAL = {Math. Res. Lett.},
    VOLUME = {14},
      YEAR = {2007},
    NUMBER = {2},
     PAGES = {165--171},
      ISSN = {1073-2780},
}

\bib{Lu}{article}{
    AUTHOR = {Lu, Zhiqin},
     TITLE = {On the lower order terms of the asymptotic expansion of
              {T}ian-{Y}au-{Z}elditch},
   JOURNAL = {Amer. J. Math.},
    VOLUME = {122},
      YEAR = {2000},
    NUMBER = {2},
     PAGES = {235--273},
      ISSN = {0002-9327},
}

\bib{M-M2}{article}{
    AUTHOR = {Ma, Xiaonan},
    AUTHOR = {Marinescu, George},
     TITLE = {Berezin-{T}oeplitz quantization on {K}\"ahler manifolds},
   JOURNAL = {J. Reine Angew. Math.},
    VOLUME = {662},
      YEAR = {2012},
     PAGES = {1--56},
}

\bib{M-M}{book}{
    AUTHOR = {Ma, Xiaonan},
    AUTHOR = {Marinescu, George},
     TITLE = {Holomorphic {M}orse inequalities and {B}ergman kernels},
    SERIES = {Progress in Mathematics},
    VOLUME = {254},
 PUBLISHER = {Birkh\"auser Verlag, Basel},
      YEAR = {2007},
     PAGES = {xiv+422},
      ISBN = {978-3-7643-8096-0},
}

\bib{griffithsharris}{book}{
    AUTHOR = {Griffiths, Phillip},
    AUTHOR = {Harris, Joseph},
     TITLE = {Principles of algebraic geometry},
    SERIES = {Wiley Classics Library},
      NOTE = {Reprint of the 1978 original},
 PUBLISHER = {John Wiley \& Sons Inc.},
   ADDRESS = {New York},
      YEAR = {1994},
     PAGES = {xiv+813},
      ISBN = {0-471-05059-8},
}

\bib{W1}{article}{
   author={Wang, Xiaowei},
   title={Balance point and stability of vector bundles over a projective
   manifold},
   journal={Math. Res. Lett.},
   volume={9},
   date={2002},
   number={2-3},
   pages={393--411},
   issn={1073-2780},
   review={\MR{1909652 (2004f:32034)}},
}

\bib{W2}{article}{
   author={Wang, Xiaowei},
   title={Canonical metrics on stable vector bundles},
   journal={Comm. Anal. Geom.},
   volume={13},
   date={2005},
   number={2},
   pages={253--285},
   issn={1019-8385},
   review={\MR{2154820 (2006b:32031)}},
}

\bib{Ze}{article}{
   author={Zelditch, Steve},
   title={Szeg\H o kernels and a thm of Tian},
   journal={Internat. Math. Res. Notices},
   date={1998},
   number={6},
   pages={317--331},
   issn={1073-7928},
   review={\MR{1616718 (99g:32055)}},
   doi={10.1155/S107379289800021X},
}

\bib{Zh}{article}{
   author={Zhang, Shouwu},
   title={Heights and reductions of semi-stable varieties},
   journal={Compositio Math.},
   volume={104},
   date={1996},
   number={1},
   pages={77--105},
   issn={0010-437X},
   review={\MR{1420712 (97m:14027)}},
}

\end{biblist}
\end{bibdiv}


\end{document}